\newcommand{\GR}[1]{{\color{magenta}#1}}
\newtheorem{remark}{\textit{Remark}}[section]
\newtheorem{problem}{Problem}[section]
\newtheorem{lemma}{Lemma}[section]
\newtheorem{theorem}{Theorem}[section]
\newtheorem{corollary}{Corollary}[section]
\def\bu{\mathbf{u}}
\def\bv{\mathbf{v}}
\def\bw{\mathbf{w}}
\def\bpi{\boldsymbol{\Pi}}
\def\bV{{\H_0(\div;\O)}}
\def\bVV{\boldsymbol{\H}}
\def\bxi{\boldsymbol{\xi}}
\def\CT{\mathcal{T}}
\def\curl{\mathop{\mathbf{curl}}\nolimits}
\def\dim{\mathop{\mathrm{\,dim}}\nolimits}
\def\disp{\displaystyle}
\def\div{\mathop{\mathrm{div}}\nolimits}
\def\E{\mathrm{K}}
\def\G{\Gamma}
\def\H{\mathrm{H}}
\def\HutO{{\H^{1+t}(\O)}}
\def\HsO{{\H^{s}(\O)}}
\def\l{\lambda}
\def\L{\mathrm{L}}
\def\LO{{\mathrm{L}^2(\O)}}
\def\N{{\mathbb{N}}}
\def\O{\Omega}
\def\Q{{\mathrm{Q}}}
\def\R{{\mathbb{R}}}
\def\rot{\mathop{\mathrm{rot}}\nolimits}
\def\T{{\mathcal T}}
\def\btau{\boldsymbol{\tau}}
\def\mQ{\L^2(\O)}
\def\mQQ{\boldsymbol{\Q}}
\renewcommand\H{\mathrm{H}}
\newcommand{\norm}[1]{\left\|#1\right\|}
\newcommand\HdivO{{\H(\div;\O)}}
\newcommand\0{\boldsymbol{0}}
\newcommand\bn{\boldsymbol{n}}
\newcommand\bK{\boldsymbol{\mathcal{K}}}
\newcommand\bbP{\mathbb{P}}
\newcommand{\jump}[1]{\Big\llbracket #1 \Big\rrbracket}
\begin{document}

%-------	Title and Author	------------------------------

\title{A posteriori virtual element method for the acoustic vibration problem}

\author{F. Lepe}
\address{GIMNAP-Departamento de Matem\'atica,
Universidad del B\'io-B\'io, Casilla 5-C, Concepci\'on, Chile.}
\email{flepe@ubiobio.cl}
\thanks{The first author has been partially supported by DICREA through project 2120173 GI/C
Universidad del B\'io-B\'io and ANID-Chile through FONDECYT project 11200529, Chile.}
\author{D. Mora}
\address{GIMNAP-Departamento de Matem\'atica, Universidad del B\'io-B\'io,
Casilla 5-C, Concepci\'on, Chile and CI$^2$MA, Universidad de Concepci\'on, Concepci\'on, Chile.}
\email{dmora@ubiobio.cl}
\thanks{The second author was partially supported by
DICREA through project 2120173 GI/C Universidad del B\'io-B\'io,
by the National Agency for Research and Development, ANID-Chile through FONDECYT project 1220881,
by project {\sc Anillo of Computational Mathematics for Desalination Processes} ACT210087,
and by project project Centro de Modelamiento Matem\'atico (CMM), ACE210010 and FB210005,
BASAL funds for centers of excellence.}
\author{G. Rivera}
\address{Departamento de Ciencias Exactas,
Universidad de Los Lagos, Casilla 933, Osorno, Chile.}
\email{gonzalo.rivera@ulagos.cl}
\thanks{The third author was supported by through project R02/21 Universidad de Los Lagos.}
\author{I. Vel\'asquez } 
\address{Departamento de Ciencias B\'asicas, Universidad del Sin\'u El\'ias Bechara Zain\'um, Monter\'ia, Colombia.}
\email{ivanvelasquez@unisinu.edu.co}
%\thanks{The fourth author was supported by....}

\subjclass[2000]{65N30,  65N25,  70J30,  76M25}

\keywords{virtual element method,  acoustic vibration problem,  
polygonal meshes,  a posteriori error estimates, superconvergence}

%\begin{frontmatter}

\begin{abstract}
In two dimensions, we propose and analyze an a posteriori error estimator for the acoustic  spectral problem
based on the virtual element method in $\H(\div;\O)$. Introducing an auxiliary unknown,
we use the fact that the primal formulation of the acoustic problem
is equivalent to a mixed formulation, in order to prove a superconvergence result, necessary to
despise high order terms. Under the virtual element approach, we prove that our
local indicator is reliable and globally efficient in the $\L^2$-norm. We provide numerical
results to assess the performance of the proposed error estimator.

\end{abstract}

\maketitle

\section{Introduction}
\label{SEC:INTR}

One of the most important subjects in the development of numerical methods for partial differential equations is the a posteriori error analysis, since it allows dealing with singular solutions that arise due, for instance, geometrical features of the domain or some particular boundary conditions, among others. In this sense, and in particular for eigenvalue problems arising from problems related to solid and fluid mechanics and electromagnetism, just to mention some possible applications, the a posteriori analysis has taken relevance in  recent years.
(see \cite{ADV2000,BGGG17,BGRS2,BDGG,DGP99,DPR03,MR2019,MRR_apost,Verfurth,Verfurth2} and the references therein).

%The a posteriori error analysis has been developed for several second order elliptic eigenvalue problems 
%using the finite element method (FEM) as for example

The virtual element method (VEM), introduced in \cite{BBCMMR2013}, has shown remarkable results in different problems, and particularly for solving eigenproblems, showing great accuracy and flexibility in the approximation of eigenvalues and eigenfunctions. We mention \cite{ CGMMV,DV_camwa2022,GMV, GV, LR21,LR2,LMRV, MWMIMA2022,MM, MZM, MR2019,MRR1,MV-SISC} as recent works on this topic.

The acoustic vibration problem appears in important applications in engineering.
In fact, it can be used to design of structures and devices for noise reduction in aircraft or cars
mainly related with solid-structure interaction problems, among others important applications.
In the last years, several numerical methods have been developed
in order to approximate the eigenpairs of the associated spectral problem.
In particular, a virtual element discretization has been proposed in \cite{BeiraoVEMAcoustic2017}.
It is well known that one of the most important features of the virtual element method
is the efficient computational implementation and the flexibility on the geometries for meshes,
where precisely adaptivity strategies can be implemented in an easy way.
In fact, the hanging nodes that appear in the refinement of some element of the mesh,
can be treated as new nodes since adjacent non matching element interfaces are acceptable
in the VEM. Recent research papers report  interesting advantages  of the VEM in the a posteriori error analysis and adaptivity for source problems.  We refer to \cite{BM2015, CGPS, CM2019, MS} and the references therein, for instance, for a further discussion. On the other hand, a posteriori error analysis for eigenproblems by VEM
have been recently introduced in \cite{WMWMIMA2022,MR2019,MRR_apost}, where primal formulations
in $\H^1$ have been considered.

%where , since the important applications in the design of structures and devices for noise reduction, solid-structure interaction problems, etc.  
%\GR{In this work, the contribution is focused in the development and  mathematical analysis  of an a posteriori estimator for the non dissipative acoustic vibration problem, based in the virtual element method. Although in the literature there are works related to this topic (see \cite{ADV2000}), the largest amount of work is concentrated on analyzing an estimator a posteriori for the mixed Laplace eigenvalue problem (see \cite{BBS19,CSX,DGP99})} 

 The contribution of our work is the design and analysis of an a posteriori error estimator for the acoustic problem, by means of a VEM method. The VEM that we consider in our analysis is the one introduced in \cite{BeiraoVEMAcoustic2017} for the a priori error analysis of the acoustic problem.
 We stress that the VEM method presented in \cite{BeiraoVEMAcoustic2017}
 may be preferable to more standard finite elements even in the case of triangular meshes in terms of dofs.
 The formulation for the acoustic problem is written only in terms of the displacement of the fluid, which leads to a bilinear form with divergence terms, implying that the analysis for the a posteriori error indicator is not straightforward. This difficulty produced by the $\H(\div)$ formulations leads to analyze, in first place, an equivalent mixed formulation which provides suitable results in order  to control the so-called high order terms that naturally appear. This analysis depending on an equivalent mixed formulation has been previously considered in \cite{BGRS1, BGRS2} for the a posteriori analysis for the Maxwell's eigenvalue problem, inspired by the superconvergence results of \cite{LX2012} for mixed spectral formulations. 
 We will follow the same techniques for the present $\H(\div)$ framework.
 However, due to the nature of the VEM, the local indicator that we present
 contains an extra term depending on the virtual projector which needs to be analyzed carefully.

 %This $\H(\curl)$ setting provides the techniques for the $\H(\div)$ framework. Nevertheless, the mentioned Maxwell eigenproblem is studied with the classic finite elements for $\H(\curl)$ and hence, all the techniques must be rewritten for the VEM approach. Moreover, due the nature of the VEM, the local indicator that we present
 %contains an extra term depending on the virtual projector.
 %, as it occurs in \cite{MR2019, MRR_apost}. 
% On the other hand, an  important contribution of our analysis is that we prove that the estimator is equivalent with the complete $\H(\div)$ error of the solution, contrary in  \cite{BGRS1, BGRS2} where the authors only consider the $\L^2$ part of the error.   
 
 The organization of our paper is the following: in section \ref{sec:model} we present the acoustic problem and the mixed equivalent formulation for it. We recall some properties of the spectrum of the spectral  problem and regularity results. In section \ref{SEC:Discrete} we found the core of the analysis of our paper, where we introduce the virtual element method for our spectral problem and technical results that will be needed to establish a superconvegence result, with the aid of mixed formulations. Section \ref{sec:apost} is dedicated to the a posteriori error analysis, where we introduce our local and global indicators which, as is customary in the posteriori error analysis, will be reliable and efficient. Finally, in section \ref{sec:numerics}, we report numerical tests where we assess the performance of our estimator.   
 
Throughout this work, $\O$ is a generic Lipschitz bounded domain of $\R^2$. For $s\geq 0$,
$\norm{\cdot}_{s,\O}$ stands indistinctly for the norm of the Hilbertian
Sobolev spaces $\HsO$ or $[\HsO]^2$ with the convention
$\H^0(\O):=\LO$. We also define the Hilbert space
$\HdivO:=\{\btau\in[\LO]^2:\ \div\btau\in\LO\}$, whose norm
is given by $\norm{\btau}^2_{\div,\O} 
:=\norm{\btau}_{0,\O}^2+\|\div\btau\|^2_{0,\O}$. For
$s\geq 0$, we define  the Hilbert space 
$\H^{s}(\div;\O):=\{\btau\in[\H^s(\O)]^2:\ \div\btau\in\HsO\}$, whose norm
is given by $\|\btau\|^2_{\H^s(\div;\O)}
:=\|\btau\|_{s,\O}^2+\|\div\btau\|^2_{s,\O}$. Finally,
we employ $\0$ to denote a generic null vector and
the relation $\texttt{a} \lesssim \texttt{b}$ indicates that $\texttt{a} \leq C \texttt{b}$, with a positive constant $C$ which is independent of $\texttt{a}$, $\texttt{b}$, and the size of the elements in the mesh. The value of $C$ might change at each occurrence. We remark that we will write the constant $C$ only when is needed.

\section{The spectral problem}
\label{sec:model}

We consider the free vibration problem for an acoustic fluid within
a bounded rigid cavity $\O\subset\R^2$ with polygonal boundary $\G$
and  outward unit normal vector $\bn$:
\begin{equation}
\label{def:acoustic_problem}
\left\{\begin{array}{ll}
\vspace{0.1cm}
-\omega^2 \varrho\bw=-\nabla p\quad &\text{in }\O, \\
\vspace{0.1cm}
p=-\varrho c^2\div \bw \quad &\text{in }\O,\\
\bw\cdot\bn=0 \quad &\text{on }\G,
\end{array}\right.
\end{equation}
where $\bw$ is the fluid displacement, $p$ is the pressure fluctuation,
$\varrho$ the density, $c$ the acoustic speed and $\omega$
the vibration frequency. For simplicity on the forthcoming analysis,
we consider $\varrho$ and $c$ equal to one.

Multiplying the first equation  in \eqref{def:acoustic_problem} by a test function $\btau\in\bV$, where 
$$\bV:=\left\{\btau\in\HdivO: \btau\cdot\bn=0\quad \text{on }\G\right\},$$ 
integrating by parts, using the boundary condition
and eliminating the pressure $p$, we arrive at the following
weak formulation 
\begin{problem}
\label{P1}
 Find $(\l,\bw)\in \R\times\bV$, $\bw\neq 0$, such that 
\begin{equation*}
\label{2}
\int_{\O}  \div\bw\div \btau =\l \int_{\O}\bw\cdot\btau \qquad \forall\, \btau\in\bV,
\end{equation*}
\end{problem}
\noindent where  $\l:=\omega^2$. %With these considerations at hand, we introduce the following problem:
It is well known that the spectrum of Problem \ref{P1} consists in   a sequence of eigenvalues    $\left\{0\right\}\cup\left\{\l_k\right\}_{k\in\N}$, such that
\begin{enumerate}
\item[\textit{i)}] $\l=0$ is an infinite-multiplicity eigenvalue
and its associated eigenspace is $\H_{0}(\div^{0};\O):=\{\btau\in\bV\,:\,\, \div\btau=0\,\,{\text{in}}\,\, \O\}$;
\item[\textit{ii)}] $\left\{\l_k\right\}_{k\in\N}$ is a
sequence of finite-multiplicity eigenvalues  which satisfy $\l_{k}\rightarrow\infty$.
\end{enumerate}
%
%
%Recently, a  VEM for Problem \ref{P1} has been analyzed in \cite{BeiraoVEMAcoustic2017}, where the convergence of the method and a priori error estimates of the eigenvalues and eigenfunctions are reported. In what follows we will extend the analysis of \cite{BeiraoVEMAcoustic2017} to the a posteriori error analysis. With this goal in mind, we will recall some properties already studied in the mentioned reference, when is needed.

To perform an a posteriori error analysis for spectral problems,  we need the so called \emph{superconvergence result}, in order to neglect high order terms as has been proved in \cite{LX2012} and already applied in, for instance, the Maxwell's eigenvalue problem \cite{BGRS1, BGRS2}. In order to obtain this superconvergence result, we begin by introducing an equivalent mixed formulation for Problem \ref{P1}. For $\l\neq 0$ let us introduce the 
unknown:
\begin{equation}
\label{eq:u}
u:=-\frac{\div\bw}{\lambda}\in \mQ.
\end{equation}
To remain consistent with the notations,
we will denote by $(\cdot,\cdot)_{0,\O}$ the $\L^2(\O)$ inner-product. 

With the aid of \eqref{eq:u} we write the following mixed eigenproblem:
\begin{problem}
\label{mixedP1}
Find $(\lambda,\bw,u)\in\mathbb{R}\times\bV\times\mQ$, with $(\bw,u)\neq\boldsymbol{0}$, such that
\begin{equation*}
\left\{
\begin{array}{rcll}
\disp\int_{\O}\bw\cdot\btau+\int_{\O}u\div \btau&=&0\quad&\forall \,\btau\in \bV,\\
\disp\int_{\O}\div\bw v&=&\disp-\lambda\int_{\O} uv\quad&\forall \,q\in \mQ. 
\end{array}
\right.
\end{equation*}
\end{problem}
It is easy to check that the spectral Problem \ref{P1} and \ref{mixedP1} are equivalent, except for $\l=0$ on the following sense:
\begin{itemize}
\item If $(\lambda,\bw)$ is a solution of Problem~\ref{P1}, with $\lambda\neq 0$, then $(\lambda,\bw,-\div\,\bw/\lambda)$ is solution of Problem \ref{mixedP1}.
\item If  $(\lambda,\bw, u)$ is a solution of Problem \ref{mixedP1}, then $(\lambda,\bw)$ is solution of Problem \ref{P1} and $u$ is defined as in \eqref{eq:u}.
\end{itemize}
% Observe that Problem \ref{mixedP1} is  the mixed formulation for the Laplace eigenvalue problem
%\begin{equation*}
%-\Delta u=\lambda u\quad\text{in}\,\,\O, \qquad \nabla u\cdot \bn=0\quad\text{on}\,\,\partial\O.
%\end{equation*}

We introduce the  bounded and symmetric bilinear
forms $a:\bV\times\bV\rightarrow\mathbb{R}$
and $b:\bV\times \mQ\rightarrow\mathbb{R}$,
defined by
\begin{align*}
a(\bw,\btau) :=\int_{\Omega}\bw\cdot\btau, \quad \bw,\btau\in\bV,
\quad
b(\btau,v) :=\int_{\O}v\div\btau, 
\qquad \btau\in\bV, \,v\in \mQ,
\end{align*}
which allows us to  we rewrite Problem~\ref{mixedP1} as follows:
 \begin{problem}
\label{P2}
Find $(\l,\bw,u)\in\R\times\bV\times \mQ$, $(\bw,u)\ne(\boldsymbol{0},0)$, such that
\begin{equation*}
\left\{
\begin{array}{rcll}
a(\bw,\btau)+b(\btau, u)&=&0 \qquad\qquad&\forall \btau\in\bV,\\
b(\bw,v)&=&-\lambda(u,v)_{0,\O}\qquad&\forall v\in \mQ.
\end{array}
\right.
\end{equation*}
\end{problem}
 \begin{remark}
 \label{rm:1}
 It is easy to check that if $(\l,\bw,\bu)$ is a solution of Problem \ref{P2}, then
 $$\bw=\nabla u\qquad\text{and}\qquad \div \bw=-\lambda u.$$
 \end{remark}
 Let $\mathcal{K}$ be the kernel of bilinear form $b(\cdot,\cdot)$ defined by:
 \begin{equation*}
 \mathcal{K}:=\{\btau\in\bV\,:\,\, \div\btau=0\,\,{\text{in}}\,\, \O\}.
 %\{\btau\in\bV\,:\,\, b(\btau, v)=0\quad\forall v\in \mQ\}
 %.
 \end{equation*}

 It is well-known that bilinear form $a(\cdot,\cdot)$
 is elliptic in $\mathcal{K}$ and that $b(\cdot,\cdot)$
 satisfies the following inf-sup condition (see \cite{bbf-2013})
 \begin{equation}
 \label{cont-infsup}
 \displaystyle\sup_{\boldsymbol{0}\neq\btau\in\bV}
 \frac{b(\btau,v)}{\|\btau\|_{\div,\O}}\geq\beta\|v\|_{0,\O}\qquad\forall v\in \mQ, 
 \end{equation}
where $\beta$ is a positive constant.
\begin{remark}
The eigenvalues of Problem~\ref{P2} are positive.
Indeed, taking $\btau=\bw$ and $v=u$ in Problem~\ref{P2} and
\GR{subtracting} the resulting forms, we  obtain
$$\lambda=\frac{a(\bw,\bw)}{\|u\|_{0,\O}^2}\geq0.$$
In addition, $\lambda=0$ implies $(\bw,u)=(\boldsymbol{0},0)$. 
\end{remark}
%For the a posteriori analysis we need a super convergence result as is stated in \cite{LX2012} in order to 
%handle with the high order terms that appears in the a posteriori analysis for eigenvalue problem. To do this task, and inspired in \cite{BGRS1} where the authors have adapted the results of \cite{LX2012} for the $\mathbf{H}(\curl)$ setting, we will operate for our $\H(\div)$ framework.

Let us introduce the following source problem:
For a given $g\in\mQ$, the pair $(\widetilde{\bw},\widetilde{u})\in \bV\times\mQ$
is the solution of the following well posed problem
\begin{align}
a(\widetilde{\bw},\btau)+b(\btau,\widetilde{u})&=0\quad\forall\,\btau\in\bV,\label{eq:mixST1}\\
b( \widetilde{\bw}, v)&=-(g,v)_{0,\O}\quad\forall \,v\in\mQ.\label{eq:mixST2}
\end{align}
%where the pair  $(\widetilde{\bw},\widetilde{u})$ satisfies $\|(\widetilde{\bw},\widetilde{u})\|_{\bV\times\mQ}\lesssim \|g\|_{\mQ}.$
%\begin{equation*}
%\label{eq:bien}
%\|(\widetilde{\bw},\widetilde{u})\|_{\bV\times\mQ}\lesssim \|g\|_{\mQ}.
%\end{equation*}

%where $\widetilde{\bw}:=Sg$ and $\widetilde{u}:=Tg$.

%As a consequence of the well posedness of \eqref{eq:mixST1}--\eqref{eq:mixST2}, 
%we have that $T$ is well defined. Also, $T$ is
%self-adjoint with respect to $(\cdot,\cdot)_{\mQ}$
%and compact. Moreover, if $(\l,(\bw, u))\in\R\times\mathcal{V}\times\mQ$
%solves Problem~\ref{mixedP1} if and only if
%$(1/\l,u)$ is an eigenpair of $T$, i.e, if 
%\begin{equation*}
%T u=\mu u,\quad\text{ with }\mu:=\frac{1}{\l}.
%\end{equation*}

According to \cite{agmon}, the regularity for the solution of system \eqref{eq:mixST1}--\eqref{eq:mixST2},
(the associated source problem to Problem \ref{P2}) is the following: there exists a constant
$\widetilde{r}>1/2$ depending on $\Omega$ such that the solution $\widetilde{u}\in\H^{1+\widetilde{r}}(\Omega)$,
where $\widetilde{r}$ is at least 1 if $\Omega$ is convex and $\widetilde{r}$ is at least
$\pi/\omega-\varepsilon$, for any $\varepsilon>0$ for a non-convex domain,
with $\omega<2\pi$ being the largest reentrant angle of $\Omega$. Hence we have the following well known additional regularity result for the source problem \eqref{eq:mixST1}--\eqref{eq:mixST2}.
\begin{equation}
\label{eq:additional_source}
\|\widetilde{\bw}\|_{\widetilde{r},\O}+\|\widetilde{u}\|_{1+\widetilde{r},\O}\lesssim\|g\|_{0,\O}.
\end{equation}

Also, the eigenvalues are well characterized for this problem as is stated in the following result (see \cite{BO} for instance).
\begin{lemma}
\label{lmm:charact}
The eigenvalues of Problem \ref{mixedP1} consist in a sequence of positive eigenvalues  $\{\lambda_n\,:\,n\in\mathbb{N}\}$,
such that $\lambda_n\rightarrow\infty$ as $n\rightarrow \infty$.
In addition, the following additional regularity result holds true for eigenfunctions
\begin{equation*}
\label{eq:addi_eigen}
\|\bw\|_{r,\O}+\|\div\bw\|_{1+r,\O}+\|u\|_{1+r,\O}\lesssim\|u\|_{0,\O},
\end{equation*}
with $r>1/2$ and  the hidden constant depending on the eigenvalue.
\end{lemma}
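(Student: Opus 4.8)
The plan is to recast the spectral characterization as an application of the spectral theorem for compact, self-adjoint, positive definite operators, which is the classical route of \cite{BO}. First I would introduce the solution operator $T:\mQ\to\mQ$ associated with the source problem \eqref{eq:mixST1}--\eqref{eq:mixST2}, defined by $Tg:=\widetilde{u}_g$, where $(\widetilde{\bw}_g,\widetilde{u}_g)\in\bV\times\mQ$ is the unique solution with datum $g$. Its well-posedness (hence the boundedness of $T$) is guaranteed by the ellipticity of $a(\cdot,\cdot)$ on the kernel $\mathcal{K}$ together with the inf-sup condition \eqref{cont-infsup}. The key observation is that $(\l,\bw,u)$ solves Problem~\ref{P2} if and only if $u\neq 0$ and $(\bw,u)$ solves the source problem with datum $g=\l u$; equivalently, $Tu=\l^{-1}u$. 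Thus the eigenvalues $\l$ of Problem~\ref{mixedP1} are exactly the reciprocals of the nonzero eigenvalues of $T$.

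Next I would verify the three structural properties of $T$. Self-adjointness follows from the symmetry of $a(\cdot,\cdot)$: testing the first equation for datum $h$ with $\btau=\widetilde{\bw}_g$ and the second equation for datum $g$ with $v=\widetilde{u}_h$ yields $(g,Th)_{0,\O}=a(\widetilde{\bw}_g,\widetilde{\bw}_h)$, which is symmetric in $g$ and $h$. The same manipulation with $h=g$ gives $(Tg,g)_{0,\O}=\|\widetilde{\bw}_g\|_{0,\O}^2\geq 0$, so $T$ is positive; moreover $(Tg,g)_{0,\O}=0$ forces $\widetilde{\bw}_g=0$, whence $g=0$ by the second equation, so $T$ is definite. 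Compactness is the crux, and this is where the regularity estimate \eqref{eq:additional_source} enters: since $\widetilde{u}_g\in\H^{1+\widetilde{r}}(\O)$ with $\widetilde{r}>1/2$ and $\|\widetilde{u}_g\|_{1+\widetilde{r},\O}\lesssim\|g\|_{0,\O}$, the operator $T$ factors through the compact embedding $\H^{1+\widetilde{r}}(\O)\hookrightarrow\mQ$ furnished by Rellich's theorem, hence $T$ is compact.

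Having established that $T$ is compact, self-adjoint, and positive definite on $\mQ$, the spectral theorem gives a sequence of positive eigenvalues $\mu_n\to 0$; setting $\l_n:=\mu_n^{-1}$ produces the announced increasing sequence of positive eigenvalues with $\l_n\to\infty$. For the regularity estimate I would reuse the identification above: since an eigenfunction $u$ satisfies $Tu=\l^{-1}u$, the pair $(\bw,u)$ solves \eqref{eq:mixST1}--\eqref{eq:mixST2} with $g=\l u$, and \eqref{eq:additional_source} gives $\|\bw\|_{r,\O}+\|u\|_{1+r,\O}\lesssim\l\|u\|_{0,\O}$ with $r=\widetilde{r}>1/2$. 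Finally, invoking Remark~\ref{rm:1}, $\div\bw=-\l u$, so $\|\div\bw\|_{1+r,\O}=\l\|u\|_{1+r,\O}\lesssim\l^2\|u\|_{0,\O}$; absorbing the powers of $\l$ into a constant depending on the eigenvalue yields the stated bound. I expect the only delicate point to be the compactness step, where one must check that the exponent $\widetilde{r}>1/2$ indeed lifts $\widetilde{u}$ into a space compactly embedded in $\L^2(\O)$.
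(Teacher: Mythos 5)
Your proof is correct and follows exactly the route the paper relies on: the paper gives no argument for this lemma and simply cites the Babu\v{s}ka--Osborn framework, and your solution-operator construction (compact, self-adjoint, positive definite $T$ on $\L^2(\O)$, spectral theorem, then regularity via \eqref{eq:additional_source} and $\div\bw=-\l u$) is precisely the standard proof that citation points to. The only cosmetic remark is that the compact embedding $\H^{1+\widetilde{r}}(\O)\hookrightarrow\L^2(\O)$ holds already for any positive Sobolev index on a bounded Lipschitz domain, so the exponent $\widetilde{r}>1/2$ is not actually needed for the compactness step, only for the stated regularity order.
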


\section{The virtual element discretization}
\label{SEC:Discrete}

We begin this section recalling the mesh construction and the
assumptions considered to introduce the discrete virtual element
space. Then, we will introduce a virtual element discretization of
Problem~\ref{P1}  and provide a spectral characterization
of the resulting discrete eigenvalue problem.

Let $\left\{\CT_h\right\}_h$ be a sequence of decompositions of $\O$
into polygons $\E$. Let $h_\E$ denote the diameter of the element $\E$
and $h:=\displaystyle\max_{\E\in\O}h_\E$. For the analysis, the following
standard assumptions on the meshes are considered (see \cite{BBMR2015,ultimo}):
there exists a positive real number $C_{\T}$ such that,
for every  $\E\in \T_h$ and for every $h$.

\begin{itemize}
\item[$\mathbf{A_1}$:]  the ratio between the shortest edge
and the diameter $h_\E$ of $\E$ is larger than $C_{\T}$,
\item[$\mathbf{A_2}$:]  $\E\in\CT_h$ is star-shaped with
respect to every point of a  ball
of radius $C_{\T}h_\E$.
\end{itemize}

For any subset $S\subseteq\R^2$ and nonnegative
integer $k$, we indicate by $\bbP_{k}(S)$ the space of
polynomials of degree up to $k$ defined on $S$.
To keep the notation simpler, we denote by $\bn$ a general normal
unit vector, its precise definition will be clear from the context.
We consider now a  polygon $\E$ and  define the following
local finite dimensional space for $k\ge0$ (see \cite{ultimo,BBMR2015}):
\begin{equation*}
\bVV_{h}^{\E}:=\left\{\btau_h\in\H(\div;\E)\cap\H(\rot;\E):\left(\btau_h\cdot\bn\right)\in
\bbP_{k}(\ell)\,
\forall \ell\in\partial \E,\, \div\btau_h\in \bbP_{k}(\E),\,
\rot\btau_h=0\text{ on }\E\right\},
\end{equation*}
%\begin{remark}
%\label{obs1}
%Since a star-shaped polygon $E$ is simply connected, it is elementary to check that a 
%vector field $\bv_h\in\bV_h^E$ satisfying $\bv_h\cdot\bn=0$ on $\partial E$ and 
%both $\div \bv_h=0$ and $\rot \bv_h=0$ in $E$, is identically zero.
%In fact, since $\rot\bv_h=0$ in $E$, there exists $\phi\in\H^{1}(E)$ such that
%$\bv_h=\nabla \phi$. Then, $\Delta\phi=\div(\bv_h)=0$ in $E$  and $\partial \phi/\partial\bn=\bv_h\cdot\bn=0$ on $\partial E$. Hence, $\bv_h=\nabla \phi= \0$ in $E$.
%This implies 
%that $\bV_h^E$ is finite dimensional, the dimension being
%less or equal to $N_E(k+1)+(k+1)(k+2)/2-1$, where $N_E$ is the number of edges of $E$.
%\end{remark}

We define the following degrees of freedom
for functions $\btau_h$ in $\bV_h^\E$:
 \begin{align}
 \label{freedom}
\displaystyle \int_{\ell}\left(\btau_h\cdot\bn\right) q\; ds&\quad \forall q\in \bbP_{k}(\ell)\quad \forall \text{ 
edge }\ell\in \partial \E,\\
\label{freedom2}
\displaystyle \int_{\E}\btau_h\cdot\nabla q&\quad \forall q\in \bbP_{k}(\E)/\bbP_{0}(\E),
\end{align}
which are unisolvent \GR{(see \cite[Proposition 1]{BeiraoVEMAcoustic2017})}.

For every decomposition $\CT_h$ of $\O$ into polygons $\E$, we define
\begin{align}
\bVV_h:=\left\{\btau_h\in\bV:\btau_h|_\E\in \bVV_h^\E\right\}.\nonumber
\end{align}
In agreement with the local choice  we choose the following degrees of freedom:
\begin{align*}
\label{globalfreedom}
\displaystyle \int_{\ell}\left(\btau_h\cdot\bn\right)q\; ds&\quad \forall q\in \bbP_{k}(\ell)\quad \text{ for all internal edges
 }\ell\in \CT_h,\\
\displaystyle \int_{\E}\btau_h\cdot\nabla q\;&\quad \forall q\in \bbP_{k}(\E)/\bbP_{0}(\E)\quad \text{ in each element }\E\in \CT_h.\\
\end{align*}
In order to construct the discrete scheme, we need some preliminary
definitions. For each element $\E\in\CT_h$, we define the space
\begin{equation*}\label{Ve}
 \widehat{\bVV}_h^\E:=\nabla(\bbP_{k+1}(\E))\subset \bVV_{h}^{\E}.
\end{equation*}
Next, we define the orthogonal projector
$\bpi_h^\E:[\L^2(\E)]^2\longrightarrow\widehat{\bVV}_h^\E$ by
\begin{equation}
\label{numero}
\int_{\E}\bpi_h^\E\btau\cdot\widehat{\bu}_h=\int_{\E}\btau\cdot\widehat{\bu}_h
\qquad \forall \widehat{\bu}_h\in \widehat{\bVV}_h^\E,
\end{equation}
and we point out that $\bpi_h^\E\btau_h$ is explicitly computable
for every $\btau_h\in\bVV_h^\E$ using only its degrees of freedom
\eqref{freedom}--\eqref{freedom2}.
In fact, it is easy to check that, for all $\btau_h\in\bVV_h^\E$ and
for all $ q\in \bbP_{k+1}(\E)$,
\begin{equation*}
\int_{\E}\bpi_h^\E\btau_h\cdot\nabla q=\int_{\E}\btau_h\cdot\nabla q
=-\int_{\E}q\div\btau_h+\int_{\partial \E}\left(\btau_h\cdot\bn\right)q\;ds.
\end{equation*}

%\begin{remark}
%In particular, for $k=0$,  for all $\bv_h\in\bV_h^E$ and
%for all $ q\in \bbP_{1}(E)$, we have that
%\begin{align*}
%\int_{E}\bpi_h^E\bv_h\cdot\nabla q
%&=-\left(\dfrac{1}{|E|}\sum_{e\subset \partial E}\int_{e}\bv_h\cdot\bn\;ds\right)\int_{E}q\;+\sum_{e\subset \partial E}\int_{e}\left(\bv_h\cdot\bn\right)q\;ds.
%\end{align*}
%\end{remark}

On the other hand, let $S^\E(\cdot,\cdot)$ be any symmetric positive
definite (and computable) bilinear form that satisfies
\begin{equation}
\label{eq:20}
c_0\,\int_{\E}\btau_h\cdot\btau_h\leq S^{\E}(\btau_h,\btau_h)\leq c_1\,\int_{\E}\btau_h\cdot\btau_h
\qquad\forall \,\btau_h\in \bVV_h^\E,
\end{equation}
for some positive constants $c_0$ and $c_1$ depending only on the shape regularity constant
$C_{\T}$ from mesh assumptions $\mathbf{A_1}$ and $\mathbf{A_2}$. Then, we define on each $\E$ the following bilinear form:
\begin{equation*}
\label{21}
a_h^{\E}(\bu_h,\btau_h)
:=\int_{\E}\bpi_h^\E\bu_h\cdot\bpi_h^\E\btau_h
+S^{\E}\big(\bu_h-\bpi_h^\E \bu_h,\btau_h-\bpi_h^\E \btau_h\big)
\qquad \bu_h,\btau_h\in\bVV_h^\E, 
\end{equation*}
and, in a natural way,
 $$
a_h(\bu_h,\btau_h)
:=\sum_{\E\in\CT_h}a_h^{\E}(\bu_h,\btau_h),
\qquad \bu_h,\btau_h\in\bVV_h.
$$
The following properties of the bilinear form
$a_h^\E(\cdot,\cdot)$ are easily derived
(by repeating, in our case, the arguments from  \cite[Proposition~4.1]{ultimo}).
\begin{itemize}
\item \textit{Consistency}: 
\begin{equation*}
\label{consistencia1}
a_h^{\E}(\widehat{\bu}_h,\btau_h)
=\int_{\E}\widehat{\bu}_h\cdot\btau_h
\qquad\forall \widehat{\bu}_h\in\widehat{\bVV}_h^\E\quad\forall \btau_h\in\bVV_h^\E,\quad\forall \,\E\in\CT_h.
 \end{equation*}
\item \textit{Stability}: There exist two positive constants $\alpha_*$
and $\alpha^*$, independent of $\E$, such that:
\begin{equation*}
\label{consistencia2}
\alpha_*\int_{\E}\btau_h\cdot\btau_h
\leq a_h^{\E}(\btau_h,\btau_h)
\leq\alpha^*\int_{\E}\btau_h\cdot\btau_h
\qquad\forall\, \btau_h\in\bVV_h^\E,\quad \forall \,\E\in\CT_h. 
\end{equation*}
\end{itemize}

Now, we are in position to write the virtual
element discretization of Problem~\ref{P1}.
\begin{problem}
\label{P3}
Find $(\l_h,\bw_h)\in \R\times\bVV_h$, $\bw_h\neq 0$ such that 
\begin{equation*}\label{vp}
(\div\bw_h,\div\btau_h)_{0,\O}=\l_h a_h(\bw_h,\btau_h) \qquad \forall \,\btau_h\in\bVV_h.
\end{equation*}
\end{problem}
We have the following spectral characterization
of the discrete eigenvalue Problem~\ref{P3} (see \cite{BeiraoVEMAcoustic2017}).

\begin{remark}
\label{remark2}
There exist $M_h:=\dim(\bVV_h)$ eigenvalues of Problem \ref{P3}
repeated according to their respective multiplicities,
which are $\left\{0\right\}\cup\left\{\l_{hk}\right\}_{k=1}^{N_h}$, where:
\begin{enumerate}
\item[\textit{i)}] the eigenspace associated with $\l_h=0$ is $\bK_h:=\{\bv_h\in \bVV_h: \div \bv_{h}=0\}$;
\item[\textit{ii)}] $\l_{hk}>0$,
$k=1,\dots,N_h:=M_h-\dim(\bK_h)$, are non-defective eigenvalues repeated
according to their respective multiplicities.
\end{enumerate}
\end{remark}

%In what follows we will derive several auxiliary results
%which will be used in the following section to prove
%convergence and error estimates for our spectral approximation.
%
%First, we will  establish the interpolation properties
%in the discrete space $\bV_h$. Although the
%$\bV_h$-interpolant can be defined for less
%regular functions, in our case it is enough to consider
%$\bv\in\bV$ such that $\bv|_{E}\in [\HtE]^2$ for some $t>1/2$ and for all $E\in\CT_h$, so that we 
%can easily take its trace on each individual edge. Then, we define its 
Now, we introduce  the virtual
element discretization of Problem~\ref{P2}.
\begin{problem}
\label{mixted:discretP}
Find $(\l_h,\bw_h,u_{h})\in \R\times\bVV_h\times\mQQ_h$, $(\bw_h,u_{h})\neq (\boldsymbol{0},0)$, such that 
\begin{equation*}
\left\{
\begin{array}{rcll}
a_{h}(\bw_h,\btau_h)+b(\btau_{h},u_{h})&=&0\quad&\forall\,\btau_h\in\bVV_h, \\
b(\bw_{h},v_h)&=&-\l_h(u_{h},v_h)_{0,\O}\quad&\forall \,v_h\in\mQQ_{h}, 
\end{array}
\right.
\end{equation*}
where $\mQQ_{h}:=\left\{q\in\LO: q|_\E\in\mathbb{P}_k(\E)\quad \forall \,\E\in \CT_h\right\}, k\ge0.$
\end{problem}
%
%
%Let us introduce the operator $T_h:\mQ_h\rightarrow\mQ_h$ such that, for a given $g_{h}\in\mQ_{h}$, the pair $(\widetilde{\bw}_{h},\widetilde{u}_{h})\in\bV_h\times\mQ_h$ solves the well posed discrete problem
%\begin{equation*}
%\left\{
%\begin{array}{rcll}
%a_h(\widetilde{\bw}_{h},\btau_h)+b(\btau_h,\widetilde{u}_{h})&=&0\quad&\forall\,\btau_h\in\bV_h,\\
%b(\widetilde{\bw}_{h},v_h)&=&-(g_{h},v_h)_\mQ\quad&\forall \,v_{h}\in\mQ_{h}.
%\end{array}
%\right.
%\end{equation*}

%\FL{ac\'a falta conectar algo}
% interpolant $\btau_I\in \bV_h$ by fixing its degrees of freedom as follows:
%\begin{align}
%\label{uno}
%\displaystyle \int_{e}(\btau-\btau_{I})\cdot\bn\; q\; ds&=0\quad \forall q\in \bbP_{k}(e),\quad \forall \text{ 
%edge }e\in \CT_h;\\
%\label{dos}
%\displaystyle \int_{\E}(\btau-\btau_{I})\cdot\nabla q&=0\quad \forall q\in \bbP_{k}(\E)/\bbP_{0}(\E),\quad \forall \, \E\in \T_{h}.
%\end{align}
We also  introduce the $\L^2(\O)$-orthogonal projection 
$$P_k:\L^2(\O)\rightarrow\mQQ_{h},$$
and  the following approximation result (see \cite{BBMR2015}): if $0\leq s \leq k+1$,  it holds
\begin{equation}
\label{eq:salim}
\|v-P_kv\|_{0,\O}\lesssim h^{s}\|v\|_{s,\O}\qquad\forall v\in\H^s(\O).
\end{equation}

The next  two technical results establish the approximation
properties for $\btau_I$ and their proofs  can be found in 
\cite[Appendix]{BeiraoVEMAcoustic2017}.
%The first result concerns approximation properties of $\div \btau_I$ and
%follows from a commuting diagram property for this interpolant.
\begin{lemma}
\label{lemmainter}
Let $\btau\in\bV$ be such that $\btau\in[\H^{t}(\O)]^2$ with $t>1/2$.
There exists $\btau_I\in\bVV_h$ that satisfies:
$$\div\btau_I=P_{k}(\div \btau)\quad\text{ in }\O.$$
Consequently, for all $\E\in \CT_h$
\begin{equation*}
\|\div\btau_I\|_{0,\E}\leq \|\div \btau\|_{0,\E},
\end{equation*}
and, if $\div\btau|_{\E}\in\H^\delta(\E)$ with $\delta\geq 0$, then
\begin{equation*}
\|\div \btau-\div\btau_I\|_{0,\E}\lesssim h_\E^{\min\{\delta,k+1\}}|\div \btau|_{\delta,\E}. 
\end{equation*}
\end{lemma}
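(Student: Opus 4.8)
The plan is to construct $\btau_I$ directly from the degrees of freedom \eqref{freedom}--\eqref{freedom2} of $\bVV_h$ and then verify the commuting relation by integration by parts. First I would note that the hypothesis $t>1/2$ is exactly what makes the relevant functionals finite on $\btau$: by the trace theorem the normal component $\btau\cdot\bn$ restricted to each edge $\ell$ belongs to $\H^{t-1/2}(\ell)\hookrightarrow\L^2(\ell)$, so the edge moments $\int_\ell(\btau\cdot\bn)q\,ds$ are well defined, while the interior moments $\int_\E\btau\cdot\nabla q$ make sense since $\btau\in[\L^2(\E)]^2$. By the unisolvence of the degrees of freedom (cf. \cite[Proposition~1]{BeiraoVEMAcoustic2017}) there is then a unique $\btau_I\in\bVV_h$ whose degrees of freedom coincide with those of $\btau$; because $\btau\in\bV$ has vanishing normal trace on $\G$, so does $\btau_I$, and hence $\btau_I\in\bVV_h\subset\bV$.

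The heart of the argument, and the step I expect to be most delicate, is the identity $\div\btau_I=P_k(\div\btau)$. By construction $\div\btau_I|_\E\in\bbP_k(\E)$, so it suffices to show $\int_\E(\div\btau_I)q=\int_\E(\div\btau)q$ for every $q\in\bbP_k(\E)$ and every $\E\in\CT_h$. I would integrate by parts on $\E$ to write $\int_\E(\div\btau_I)q=-\int_\E\btau_I\cdot\nabla q+\int_{\partial\E}(\btau_I\cdot\bn)q\,ds$ and handle the two terms separately. For the volume term, $\nabla q$ depends only on the class of $q$ in $\bbP_k(\E)/\bbP_0(\E)$, so the interior degrees of freedom \eqref{freedom2} give $\int_\E\btau_I\cdot\nabla q=\int_\E\btau\cdot\nabla q$. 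For the boundary term, on each edge $q|_\ell\in\bbP_k(\ell)$, so the edge degrees of freedom \eqref{freedom} give $\int_{\partial\E}(\btau_I\cdot\bn)q\,ds=\int_{\partial\E}(\btau\cdot\bn)q\,ds$ (on any edge lying on $\G$ both sides vanish). Undoing the integration by parts for $\btau$ then yields $\int_\E(\div\btau_I)q=\int_\E(\div\btau)q$, and since $q\in\bbP_k(\E)$ is arbitrary this is precisely the defining property of $P_k(\div\btau)$ on $\E$; hence $\div\btau_I=P_k(\div\btau)$ in $\O$.

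With this identity the two remaining bounds are immediate. The stability estimate follows because $P_k$ is the $\L^2(\E)$-orthogonal projection, hence a contraction, so $\|\div\btau_I\|_{0,\E}=\|P_k(\div\btau)\|_{0,\E}\leq\|\div\btau\|_{0,\E}$. For the approximation estimate I would invoke the local counterpart of \eqref{eq:salim}, namely $\|w-P_kw\|_{0,\E}\lesssim h_\E^{\min\{\delta,k+1\}}|w|_{\delta,\E}$, which holds on star-shaped elements by the Bramble--Hilbert lemma under assumption $\mathbf{A_2}$; applying it to $w=\div\btau$ gives $\|\div\btau-\div\btau_I\|_{0,\E}=\|\div\btau-P_k(\div\btau)\|_{0,\E}\lesssim h_\E^{\min\{\delta,k+1\}}|\div\btau|_{\delta,\E}$, which is the claimed bound.
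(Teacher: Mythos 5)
Your proof is correct and follows essentially the same route as the one the paper points to (the appendix of \cite{BeiraoVEMAcoustic2017}): define $\btau_I$ by matching the degrees of freedom \eqref{freedom}--\eqref{freedom2}, establish the commuting property $\div\btau_I=P_k(\div\btau)$ by integration by parts element by element, and then read off both estimates from the fact that $P_k$ is the local $\L^2$-orthogonal projection. The handling of the boundary edges and of the quotient by $\bbP_0(\E)$ in the interior moments is exactly the right care to take, so there is nothing to add.
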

%The second result concerns the $\LO$ approximation error for $\btau_I$.
\begin{lemma}
\label{lemmainterV_I}
 Let $\btau\in\bV$ be such that $\btau\in[\H^{t}(\O)]^2$ with $ t >1/2$. Then,
there exists $\btau_I\in\bVV_h$ such that,
 if $1\leq t\leq k+1$, there holds
\begin{equation*}
\|\btau-\btau_I\|_{0,\E}\lesssim h_\E^{t}|\btau|_{t,\E},
\end{equation*}
 where the hidden constant is independent of $h$. Moreover, if $1/2 <t \leq1$, then
\begin{equation*}
\|\btau-\btau_I\|_{0,\E}\lesssim h_\E^{t}|\btau|_{t,\E}+h_\E\|\div\btau\|_{0,\E}.
\end{equation*}
\end{lemma}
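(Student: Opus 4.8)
The plan is to keep the same interpolant $\btau_I\in\bVV_h$ built from the degrees of freedom \eqref{freedom}--\eqref{freedom2} as in Lemma~\ref{lemmainter}, which is well defined by unisolvence and satisfies the commuting relation $\div\btau_I=P_k(\div\btau)$ recorded there. Since $\btau_I$ is only known implicitly, I would not estimate $\norm{\btau-\btau_I}_{0,\E}$ directly, but insert the computable comparison field $\btau_\pi:=\bpi_h^\E\btau$, which lies in $\widehat{\bVV}_h^\E=\nabla(\bbP_{k+1}(\E))\subset\bVV_h^\E$, and split
$$\norm{\btau-\btau_I}_{0,\E}\leq\norm{\btau-\btau_\pi}_{0,\E}+\norm{\btau_\pi-\btau_I}_{0,\E}.$$
The virtue of choosing $\btau_\pi$ inside the virtual space is that $\btau_\pi-\btau_I\in\bVV_h^\E$, so the second term can be attacked through the degrees of freedom.

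For the first term, $\norm{\btau-\btau_\pi}_{0,\E}=\norm{(I-\bpi_h^\E)\btau}_{0,\E}$ is exactly the best $\L^2$-approximation of $\btau$ by $\nabla(\bbP_{k+1}(\E))$; writing $\btau=\nabla\phi$ for a local scalar potential (the fields of interest here are irrotational, cf.\ Remark~\ref{rm:1}) and applying a Bramble--Hilbert / Dupont--Scott argument on the star-shaped element $\E$ under $\mathbf{A_1}$--$\mathbf{A_2}$ yields $\norm{\btau-\btau_\pi}_{0,\E}\lesssim h_\E^{t}|\btau|_{t,\E}$ for the whole range $t>1/2$. For the second term I would invoke a scaled degree-of-freedom stability inequality for $\bVV_h^\E$, bounding $\norm{\bv_h}_{0,\E}$ by an $h_\E$-weighted combination of the edge moments $\int_\ell(\bv_h\cdot\bn)q$ and the interior moments $\int_\E\bv_h\cdot\nabla q$. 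Two simplifications occur: because $\btau_I$ matches the degrees of freedom of $\btau$, all degrees of freedom of $\btau_\pi-\btau_I$ coincide with those of $\btau_\pi-\btau$; and because $\bpi_h^\E$ is the $\L^2$-projection onto $\nabla(\bbP_{k+1}(\E))$ and every interior test gradient $\nabla q$ lies in that space, the interior moments of $\btau_\pi-\btau$ vanish. Hence $\norm{\btau_\pi-\btau_I}_{0,\E}$ is controlled solely by the edge moments of $(\btau_\pi-\btau)\cdot\bn$, i.e.\ by the normal trace of the bulk error on $\partial\E$.

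This normal-trace control is the crux and the source of the two regimes. For $1\leq t\leq k+1$ the trace theorem in $[\H^t(\E)]^2$ is available, and a scaled trace inequality bounds $\norm{(\btau-\btau_\pi)\cdot\bn}_{0,\ell}$ by a quantity of order $h_\E^{t-1/2}|\btau|_{t,\E}$; after the $h_\E^{1/2}$-weighting supplied by the stability inequality this reproduces precisely $h_\E^{t}|\btau|_{t,\E}$. For $1/2<t\leq1$ the $\H^t$-trace is no longer at our disposal, and the normal trace must instead be estimated through the $\H(\div;\E)$ normal-trace inequality; this is exactly what forces $\div\btau$ to appear and produces the extra term $h_\E\norm{\div\btau}_{0,\E}$. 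I expect this low-regularity trace estimate, together with the careful bookkeeping of the powers of $h_\E$ in the scaled trace and inverse inequalities, to be the main technical obstacle, whereas the bulk approximation and the reduction to edge moments are routine once the shape-regularity constants of $\mathbf{A_1}$--$\mathbf{A_2}$ are fixed.
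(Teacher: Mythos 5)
The paper does not prove this lemma at all: it is imported verbatim, with proof deferred to the appendix of \cite{BeiraoVEMAcoustic2017}, so there is no in-text argument to match yours against. Your outline (DOF interpolant with the commuting property of Lemma~\ref{lemmainter}, insertion of $\btau_\pi=\bpi_h^\E\btau$, reduction to edge moments, scaled trace estimates in two regularity regimes) is structurally consistent with how such $\H(\div)$ interpolation estimates are proved, but two steps you label as routine are exactly where the substance lies, and one hides a genuine restriction. The bulk term first: $\widehat{\bVV}_h^\E=\nabla(\bbP_{k+1}(\E))$ contains $[\bbP_0(\E)]^2$ but not $[\bbP_1(\E)]^2$ (the rotational field $(-x_2,x_1)$ is never a gradient), so Bramble--Hilbert applied to $\bpi_h^\E$ gives $\|\btau-\bpi_h^\E\btau\|_{0,\E}\lesssim h_\E|\btau|_{1,\E}$ and nothing better for a general $\btau$. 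Your parenthetical ``the fields of interest are irrotational'' is therefore not an aside but a necessary hypothesis for the first estimate when $t>1$: every $\btau_I\in\bVV_h$ satisfies $\rot\btau_I=0$ on each $\E$, so on an element where $\rot\btau$ is bounded away from zero one has $\|\btau-\btau_I\|_{0,\E}\gtrsim h_\E^2$ while $h_\E^{t}|\btau|_{t,\E}=O(h_\E^{t+1})$; the high-order bound is simply unattainable for non-gradient fields. You should promote this to an explicit standing assumption (it is harmless for the paper, which only ever interpolates $\bw=\nabla u$ and $\widetilde{\bw}=\nabla\widetilde{u}$; compare Remark~\ref{rm:1} and Lemma~\ref{A}, which is stated only for $\nabla q$), and you must then control $|\phi|_{1+t,\E}$ of the local potential by $|\btau|_{t,\E}$, which is immediate but worth saying.

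The second gap is the ``scaled degree-of-freedom stability inequality'' for $\bVV_h^\E$. Your two preparatory observations are correct: the DOFs of $\btau_\pi-\btau_I$ equal those of $\btau_\pi-\btau$, and the interior moments of $\btau_\pi-\btau$ vanish because $\nabla\bbP_k(\E)\subset\widehat{\bVV}_h^\E$ and $\bpi_h^\E$ is the orthogonal projection onto that space. But the stability bound you then invoke is not an off-the-shelf polynomial norm equivalence, since elements of $\bVV_h^\E$ are not polynomials. The $h$-uniform estimate $\|\bv_h\|_{0,\E}\lesssim h_\E\|\div\bv_h\|_{0,\E}+h_\E^{1/2}\|\bv_h\cdot\bn\|_{0,\partial\E}$ has to be derived from the local Neumann problem that characterizes the virtual functions ($\bv_h=\nabla\rho$ with $\Delta\rho=\div\bv_h$ and $\partial_n\rho=\bv_h\cdot\bn$), via a Poincar\'e inequality and a scaled trace inequality on the star-shaped element, together with a separate duality bound for $\|\div(\btau_\pi-\btau_I)\|_{0,\E}$ in terms of the same boundary data. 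That derivation is the genuinely ``virtual'' content of the proof in \cite{BeiraoVEMAcoustic2017} and cannot be waved through. Finally, a small correction of mechanism: for $1/2<t\le1$ the $\L^2(\partial\E)$ trace of an $[\H^t(\E)]^2$ field is still available (this is precisely why the lemma assumes $t>1/2$); the divergence term in the second estimate does not arise because the trace fails, but because the trace constant degenerates as $t\downarrow 1/2$ and the scaling argument is closed through the $\H(\div)$ structure instead, exactly as in the classical low-regularity Raviart--Thomas interpolation estimate.
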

% 
%Now, let $\bK_h^{\bot}$ be the orthogonal complement in $\bV_h$ of 
%$\bK_h$, namely,
%$$\bK_h^{\bot}:=\left\{\btau_h\in\bV_h: 
%\int_{\O}\btau_h\cdot\boldsymbol{\xi}_h=0\quad \forall \,\boldsymbol{\xi}_h\in \bK_h \right\}.$$
%%
%
%The following lemma provides a discrete Helmoltz decomposition for the elements of  $\bK_h^{\bot}$. \FL{esto no se conecta con nada}

Let $\bpi_h$ be defined in $[\L^2(\O)]^2$ by $(\bpi_h \btau)|_\E:=\bpi_h^\E\btau\text{ for all }\E\in \T_h$,
%\begin{equation*}
%\label{proyectorg}
%(\bpi_h \btau)|_\E:=\bpi_h^\E\btau\text{ for all }\E\in \T_h,
%\end{equation*}
 where $\bpi_h^\E$ is the operator defined in \eqref{numero}, and that
 satisfies the following result proved in \cite[Lemma~8]{BeiraoVEMAcoustic2017}.
 \begin{lemma}
 \label{A}
For  every $q\in\HutO$ with $1/2< t\le k+1$, there holds 
\begin{align*}
&\quad \left\|\nabla  q-\bpi_h (\nabla  q)\right\|_{0,\O}\lesssim h^{t}\Vert\nabla q\Vert_{t,\O}.
\end{align*}
\end{lemma}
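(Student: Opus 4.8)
The plan is to exploit the fact that $\bpi_h$ is assembled from local $\L^2$-orthogonal projections and to reduce the estimate to a purely local polynomial approximation problem. Since $(\bpi_h\btau)|_\E=\bpi_h^\E\btau$ for every $\E\in\CT_h$, I would first localize,
\[
\|\nabla q-\bpi_h(\nabla q)\|_{0,\O}^2=\sum_{\E\in\CT_h}\|\nabla q-\bpi_h^\E(\nabla q)\|_{0,\E}^2,
\]
so that it suffices to bound each local contribution by $h_\E^{2t}|q|_{1+t,\E}^2$ with a constant independent of $\E$ and $h$.

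On a fixed element $\E$, the key observation is that $\bpi_h^\E$, defined in \eqref{numero}, is exactly the $\L^2(\E)$-orthogonal projection onto $\widehat{\bVV}_h^\E=\nabla(\bbP_{k+1}(\E))$. Hence it realizes the best approximation of $\nabla q$ in that space, and for every $p\in\bbP_{k+1}(\E)$ one has
\[
\|\nabla q-\bpi_h^\E(\nabla q)\|_{0,\E}=\min_{\widehat{\bu}_h\in\widehat{\bVV}_h^\E}\|\nabla q-\widehat{\bu}_h\|_{0,\E}\le\|\nabla q-\nabla p\|_{0,\E}.
\]
I would then choose $p$ to be a good scalar polynomial approximation of $q$, namely the averaged Taylor polynomial $Q^{k+1}q\in\bbP_{k+1}(\E)$ of degree $k+1$, so that $\nabla p=\nabla Q^{k+1}q\in\widehat{\bVV}_h^\E$ by construction and the right-hand side reduces to the $\H^1$-seminorm of the scalar error, $\|\nabla q-\nabla Q^{k+1}q\|_{0,\E}=|q-Q^{k+1}q|_{1,\E}$.

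The heart of the argument is then a Bramble--Hilbert (Dupont--Scott) estimate on star-shaped domains. Using the mesh assumptions $\mathbf{A_1}$ and $\mathbf{A_2}$, which guarantee that each $\E$ is star-shaped with respect to a ball of radius comparable to $h_\E$, the averaged Taylor polynomial satisfies
\[
|q-Q^{k+1}q|_{1,\E}\lesssim h_\E^{\,(1+t)-1}\,|q|_{1+t,\E}=h_\E^{\,t}\,|q|_{1+t,\E},
\]
with a hidden constant depending only on $C_{\T}$. Here the hypothesis $t\le k+1$ is precisely what ensures that the Sobolev order $1+t$ does not exceed the approximation order $k+2$ of $\bbP_{k+1}(\E)$, while $t>1/2$ keeps us in the functional framework where $\nabla q$ has a well-defined normal trace in $\bV$. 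I expect the main obstacle to be exactly this step: verifying the \emph{uniformity} of the constant across the whole polygonal family and, for non-integer $1+t$, justifying the fractional-order version of the estimate (either directly from the Dupont--Scott theory on star-shaped sets or by interpolation between the integer cases $m\le 1+t\le m+1$); the localization and best-approximation steps are routine.

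Finally, I would insert the local bound into the localized identity, use $h_\E\le h$ to factor out the maximal mesh size, and recombine the element seminorms,
\[
\|\nabla q-\bpi_h(\nabla q)\|_{0,\O}^2\lesssim\sum_{\E\in\CT_h}h_\E^{2t}\,|q|_{1+t,\E}^2\le h^{2t}\sum_{\E\in\CT_h}|q|_{1+t,\E}^2=h^{2t}\,|q|_{1+t,\O}^2.
\]
Taking square roots and using $|q|_{1+t,\O}=|\nabla q|_{t,\O}\le\|\nabla q\|_{t,\O}$ delivers the claimed estimate $\|\nabla q-\bpi_h(\nabla q)\|_{0,\O}\lesssim h^{t}\|\nabla q\|_{t,\O}$.
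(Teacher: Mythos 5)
Your argument is correct and coincides with the standard proof that the paper defers to (\cite[Lemma~8]{BeiraoVEMAcoustic2017}): localize, use that $\bpi_h^\E$ is the $\L^2(\E)$-orthogonal projection onto $\nabla(\bbP_{k+1}(\E))$ so that the error is bounded by $|q-p|_{1,\E}$ for any $p\in\bbP_{k+1}(\E)$, and invoke polynomial approximation (Bramble--Hilbert on star-shaped elements, with the fractional-order version for non-integer $t$) under assumptions $\mathbf{A_1}$--$\mathbf{A_2}$. No gaps beyond the fractional Bramble--Hilbert caveat you already flag, which is handled exactly as you describe.
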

%\begin{proof}
%See
%\end{proof}

%\GR{The following a priori error estimates have been proved in \cite{LR2}
%\begin{lemma}
%\label{lemcotste}
%For all $g_{h}\in\mQ_h$, if $\widetilde{u}=Tg_{h}$ and
%$\widetilde{u}_h=T_hg_{h}$, then
%$$
%\|\widetilde{\bw}-\widetilde{\bw}_{h}\|_{\mQ}+\left\|\widetilde{u}-\widetilde{u}_h\right\|_{\mQ}\lesssim h^{r}\|g_{h}\|_{\mQ}.
%$$
%%for all $u_I\in\Vh$ and for all $u_{\pi}\in\LO$ such that
%%$u_{\pi}|_{\E}\in\P_k(\E)$ $\forall\E\in\CT_h$.
%\end{lemma}
%\begin{proof}
%See \cite[Lemma 4.1]{LR2}. 
%\end{proof}}

As a consequence of the previous result we have the following
estimate.
\begin{lemma}
\label{lm:A1}
For all $r>\frac12$ as in Lemma~\ref{lmm:charact}, the  following error estimate holds
$$\|\bw-\bpi_h\bw\|_{0,\O}\lesssim h^{\min\{r,k+1\}}.$$
\end{lemma}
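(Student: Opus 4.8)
The plan is to reduce the estimate to a direct application of Lemma~\ref{A}, exploiting the fact that an eigenfunction $\bw$ is a gradient. First I would invoke Remark~\ref{rm:1}, which tells us that any eigentriple $(\l,\bw,u)$ of Problem~\ref{P2} satisfies $\bw=\nabla u$. This is the key structural observation: although $\bpi_h$ is defined as the $\L^2$-projection onto the \emph{local} gradient spaces $\widehat{\bVV}_h^\E=\nabla(\bbP_{k+1}(\E))$, so that a priori it controls only gradient-type approximation errors, here the target $\bw$ is itself globally a gradient, and hence the projector acts on it with optimal order.

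Next I would record the available regularity. By Lemma~\ref{lmm:charact}, the component $u$ belongs to $\H^{1+r}(\O)$ with $r>1/2$, and moreover
$$\|\bw\|_{r,\O}+\|u\|_{1+r,\O}\lesssim\|u\|_{0,\O},$$
with the hidden constant depending on the eigenvalue. In particular $\nabla u=\bw\in[\H^{r}(\O)]^2$ with $\|\nabla u\|_{r,\O}=\|\bw\|_{r,\O}\lesssim\|u\|_{0,\O}$.

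The main step is then to apply Lemma~\ref{A} with $q:=u$ and the exponent $t:=\min\{r,k+1\}$. I must first check the hypotheses of that lemma: since $r>1/2$ and $k\ge0$, we have $1/2<t\le k+1$; and since $t\le r$, the continuous inclusion $\H^{1+r}(\O)\subset\HutO$ gives $u\in\HutO$ as required. Lemma~\ref{A} then yields
$$\|\bw-\bpi_h\bw\|_{0,\O}=\|\nabla u-\bpi_h(\nabla u)\|_{0,\O}\lesssim h^{t}\|\nabla u\|_{t,\O}.$$
Finally I would bound $\|\nabla u\|_{t,\O}\le\|\nabla u\|_{r,\O}=\|\bw\|_{r,\O}\lesssim\|u\|_{0,\O}$ using the regularity estimate above, and absorb the resulting constant (which is finite for a fixed eigenfunction) into the symbol $\lesssim$. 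Substituting $t=\min\{r,k+1\}$ gives exactly $\|\bw-\bpi_h\bw\|_{0,\O}\lesssim h^{\min\{r,k+1\}}$.

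Since every ingredient is already in place, I do not expect a genuine obstacle; the only points requiring care are the correct choice of the interpolation exponent $t=\min\{r,k+1\}$, which simultaneously covers the low-regularity case $r\le k+1$ and the high-regularity case $r>k+1$ where the approximation order saturates at $k+1$, and the verification of the bounds $1/2<t\le k+1$ needed to invoke Lemma~\ref{A}.
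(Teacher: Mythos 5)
Your argument is correct and coincides with the paper's proof, which likewise combines Remark~\ref{rm:1} ($\bw=\nabla u$), the regularity of Lemma~\ref{lmm:charact}, and the projection estimate of Lemma~\ref{A}; you have simply spelled out the choice $t=\min\{r,k+1\}$ and the verification of the hypotheses, which the paper leaves implicit.
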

\begin{proof}
The proof follows directly from Remark \ref{rm:1}, Lemma~\ref{lmm:charact} and Lemma \ref{A}.
\end{proof}

The following results gives us the error estimates between
the eigenfunctions and eigenvalues of Problems \ref{mixted:discretP}
and \ref{P2}.
\begin{theorem}
\label{cotadoblepandeo} 
For all $r>\frac12$ as in Lemma \ref{lmm:charact}, the  following error estimates hold 
%\begin{align}\nonumber
%\|\bw-\bw_{h}\|_{\HdivO}+\left\|u-u_h\right\|_{\mQ}&\lesssim h^{\min\{r,k+1\}},\\ \label{eq:l_lh}
%\left|\l-\l_h\right|&\lesssim  \|\bw-\bw_h\|_{\HdivO}^2+\|u-u_h\|_{\mQ}^2+\|\bw-\bpi_h\bw\|_{\mQ}^2,
%\end{align}
\begin{equation}\label{eq:l_lh}
\begin{split}
\|\bw-\bw_{h}\|_{\div,\O}+\left\|u-u_h\right\|_{0,\O}&\lesssim h^{\min\{r,k+1\}},\\ 
\left|\l-\l_h\right|&\lesssim  h^{2\min\{r,k+1\}},
\end{split}
\end{equation}
 where the hidden constants are independent of $h$.
%$$\
%\left|\l-\l_h^{(i)}\right|
%\le Ch^{2\min\{\widetilde{r},k+1\}}.
%$$
\end{theorem}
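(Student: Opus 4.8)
The plan is to establish the error estimates in \eqref{eq:l_lh} by invoking the abstract theory of mixed eigenvalue approximation, following the standard framework of Babu\v{s}ka--Osborn together with the spectral theory for mixed formulations. The core strategy is to compare the continuous and discrete solution operators associated to Problems \ref{P2} and \ref{mixted:discretP}, and to show that these operators converge in the appropriate operator norm with the stated rate; the eigenvalue and eigenfunction estimates then follow from the general perturbation results for compact operators. First I would introduce the solution operator $\bT:\mQ\to\mQ$ defined by $\bT g := \widetilde u$, where $(\widetilde\bw,\widetilde u)$ solves the source problem \eqref{eq:mixST1}--\eqref{eq:mixST2}, and its discrete counterpart $\bT_h:\mQ\to\mQQ_h$ defined via the discrete mixed source problem. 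The eigenvalues of $\bT$ are the reciprocals $1/\l$, and likewise for $\bT_h$, so convergence of eigenpairs reduces to estimating $\|(\bT-\bT_h)g\|_{0,\O}$.

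The key steps I would carry out are as follows. The discrete problem is a nonconforming mixed method because $a_h$ replaces $a$; hence I would first verify that the discrete bilinear forms satisfy the discrete ellipticity on the discrete kernel $\bK_h$ and the discrete inf-sup condition for $b(\cdot,\cdot)$ on $\bVV_h\times\mQQ_h$. The ellipticity on $\bK_h$ follows from the stability property of $a_h^\E$ established just above Problem \ref{P3}, and the discrete inf-sup follows from the commuting-diagram structure furnished by Lemma \ref{lemmainter}, namely that $\div\btau_I=P_k(\div\btau)$, which makes $\div\bVV_h=\mQQ_h$. With these two conditions in hand, the discrete source problem is well posed uniformly in $h$, and the standard Babu\v{s}ka--Brezzi error estimate gives a quasi-best approximation bound of the form
\begin{equation*}
\|\widetilde\bw-\widetilde\bw_h\|_{\div,\O}+\|\widetilde u-\widetilde u_h\|_{0,\O}
\lesssim \inf_{\btau_I\in\bVV_h}\|\widetilde\bw-\btau_I\|_{\div,\O}
+\inf_{v_h\in\mQQ_h}\|\widetilde u-v_h\|_{0,\O}
+\mathrm{(consistency\ term)}.
\end{equation*}
I would then bound each term: the interpolation errors are controlled by Lemmas \ref{lemmainter} and \ref{lemmainterV_I} together with the approximation property \eqref{eq:salim}, and the consistency term arising from $a_h-a$ is handled using the consistency property of $a_h^\E$ and Lemma \ref{A}. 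Feeding in the regularity \eqref{eq:additional_source} converts these into the rate $h^{\min\{r,k+1\}}$.

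Once uniform convergence of $\bT_h$ to $\bT$ in the $\L^2$-operator norm is secured, the eigenfunction bound in \eqref{eq:l_lh} follows from the abstract spectral approximation theory (Babu\v{s}ka--Osborn) applied to the selfadjoint-like operators $\bT,\bT_h$: the gap between the discrete and continuous eigenspaces is bounded by the operator-norm convergence, which yields the first estimate. For the eigenvalue estimate I would exploit the \emph{double-order} phenomenon: the error $|\l-\l_h|$ is governed by a bilinear expression in the eigenfunction errors, so squaring the first-order rate produces $h^{2\min\{r,k+1\}}$. Concretely, using the symmetry of $a$ and $b$ and the standard identity expressing $\l-\l_h$ through $a((\bw-\bw_h),(\bw-\bw_h))$-type terms plus the consistency discrepancy of $a_h$, one obtains the squared rate. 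The main obstacle I anticipate is precisely the treatment of the nonconforming consistency term coming from $a_h\neq a$: I must verify that this term is also of second order in the eigenvalue estimate (rather than polluting it to first order), which is where the projection $\bpi_h$ and its approximation property in Lemma \ref{A}, combined with the superconvergence of $\div\bw$ guaranteed by the enhanced regularity $\|\div\bw\|_{1+r,\O}$ in Lemma \ref{lmm:charact}, play the decisive role.
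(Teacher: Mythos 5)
Your proposal is correct and follows essentially the same route as the paper: the paper's proof is a one-line citation to the arguments of \cite[Theorems 4.2, 4.3 and 4.4]{LR2}, which proceed exactly as you outline, via the continuous and discrete solution operators for the mixed source problem, uniform well-posedness (discrete kernel ellipticity plus the inf-sup furnished by the commuting property $\div\btau_I=P_k(\div\btau)$), operator-norm convergence from the interpolation estimates and the regularity \eqref{eq:additional_source}, and the spectral approximation theory with the standard double-order identity for the eigenvalue error.
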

\begin{proof}
The proof follows by repeating the arguments in \cite[Theorems~4.2, 4.3 and 4.4]{LR2}.
\end{proof}

For the a  posteriori error analysis that will be developed in Section~\ref{sec:apost},
we will need the following auxiliary results, which have been adapted from \cite{BGRS1,BGRS2}.

In what follows,  let $(\l,\bw,u)$ be a solution of Problem~\ref{P2},
where we assume that $\l$ is a simple eigenvalue. 
Let $(\bw,u)$ be an associated eigenfunction which we normalize
by taking $\|u\|_{0,\O}=1$. Then, for each mesh $\CT_{h}$,
there exists a solution $(\l_{h},\bw_{h},u_{h})$ of Problem~\ref{mixted:discretP}
such that $\l_{h}$ converges to $\l$, as $h$ goes to zero, $\|u_{h}\|_{0,\O}=1$,
and Theorem~\ref{cotadoblepandeo} holds true.
%In the sequel, we assume that $\|u\|_{\mQ}=1$. Also, for a fixed eigenvalue $\lambda_h$, its associated eigenfunction is $(\bw_h,u_h)_\mQ$ with  $\|u_h\|_{\mQ}=1$.

%Let $\widehat{u}:=P_ku$, where $P_k$ is the $\L^2$-orthogonal projection defined in \eqref{eq:L2project}. 

Let us introduce the following well posed source problem with data $(\l,u)$:
Find $(\widehat{\bw}_h,\widehat{u}_h)\in\bVV_h\times\mQQ_h$, such that
\begin{equation}\label{eq:3.3a}
\left\{
\begin{array}{rcll}
a_h(\widehat{\bw}_h,\btau_h)+b(\btau_h,\widehat{u}_h)&=&0\quad&\forall\,\btau_h\in\bVV_h, \\
b(\widehat{\bw}_h,v_h)&=&-\l(u,v_h)_{0,\O}\quad&\forall \,v_h\in\mQQ_{h}.
\end{array}
\right.
\end{equation}
%\begin{align}
%\label{eq:3.3a}
%a_h(\widehat{\bw}_h,\btau_h)+b(\btau_h,\widehat{u}_h)&=0\quad\forall\,\btau_h\in\bV_h,\\\label{eq:3.3b}
%b(\widehat{\bw}_h,v_h)&=-\l(u,v_h)_\mQ\quad\forall \,v\in\mQ_{h},
%\end{align}
%where  $(\widehat{\bw}_h,\widehat{u}_h)$ is the approximated solution of $(\bw,u)$.
%Following step by step the proof in \cite[Lemma 4.1]{LR2}. and using that $\div\bw=-\l u$ and $\div\widehat{\bw}_{h}=\P_{k}(-\l u)$, we obtain
%\begin{equation*}
%\label{eq:3.4}
%\|\bw-\widehat{\bw}_{h}\|_{\HdivO}+\|u-\widehat{u}_{h}\|_{\mQ}\lesssim h^{\min\{\widetilde{r},k+1\}}\|u\|_{\mQ}.
%\end{equation*}

%\GR{cuentas para obtener \eqref{eq:3.4} \begin{align*}
%\|\bw-\widehat{\bw}_{h}\|_{\mQ}&\leq C\left(\sum_{\E\in\CT_{h}}\left(\|\bw-\bw_{I}\|_{0,\E}+\|\bw-\bpi_h^\E\bw\|_{0,\E}\right)\right)\\
%\|\div(\bw-\widehat{\bw}_{h})\|_{\mQ}&\leq C\|u-\P_{k}u\|_{\mQ}\\
%\|u-\widehat{u}_{h}\|_{\mQ}&\leq C\left(\|u-\P_{k}u\|_{\mQ}+\sum_{\E\in\CT_{h}}\left(\|\bw-\bw_{I}\|_{0,\E}+\|\bw-\bpi_h^\E\bw\|_{0,\E}\right)\right)
%\end{align*}
%\begin{equation*}
%\|\bw-\widehat{\bw}_{h}\|_{\HdivO}+\|u-\widehat{u}_{h}\|_{\mQ}\leq C\left(\|u-\P_{k}u\|_{\mQ}+\sum_{\E\in\CT_{h}}\left(\|\bw-\bw_{I}\|_{0,\E}+\|\bw-\bpi_h^\E\bw\|_{0,\E}\right)\right)
%\end{equation*}}

%\GR{$$\|u-\widehat{u}_{h}\|_{\mQ}\leq Ch^{\min\{t,k\}}\|u\|_{\mQ}\text{lemma 4.4 mixto?, falta el otro termino en la norma completa}$$}

%\begin{equation}
%\label{eq:l_lh}
%\displaystyle|\lambda-\lambda_h|\lesssim  \|\bw-\bw_h\|_{\HdivO}^2+\|u-u_h\|_{\mQ}^2+\|\bw-\bpi_h\bw\|_{\mQ}^2.
%\end{equation}

With this mixed problem at hand, we will prove the following
technical lemmas with the goal of derive a superconvergence
result for our VEM. To make matters precise, the forthcoming
analysis is inspired by \cite{BGRS2}, where the authors have
generalized the results previously obtained by \cite{bbf-2013, DR, gardini}.
We begin proving a higher-order approximation between  $\widehat{u}_{h}$ and $P_{k}u$.
\begin{lemma}
\label{lmm:rodolfo9}
Let $(\lambda, \bw,u)$ be a solution of Problem~\ref{P2}
and $(\widehat{\bw}_{h},\widehat{u}_{h})$ be a solution of the mixed formulation
\eqref{eq:3.3a}. Then, there holds 
\begin{equation*}
\|\widehat{u}_h-P_ku\|_{0,\O}\lesssim h^{\widetilde{r}}\left(\|\bw-\widehat{\bw}_h\|_{\div,\O}
+\|\bw-\bpi_h\bw\|_{0,\O}\right),
\end{equation*}
%\begin{equation*}
%\|\widehat{u}_h-P_ku\|_{\mQ}\lesssim h^{2\widetilde{r}},
%\end{equation*}
where $\widetilde{r}\in (\frac{1}{2},1]$ and the hidden constant is independent of $h$.
\end{lemma}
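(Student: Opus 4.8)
The plan is to run a duality (Aubin--Nitsche type) argument tailored to the mixed structure, exploiting the fact that the datum of the dual problem lands in $\mQQ_h$. Set $\zeta_h:=\widehat u_h-P_ku\in\mQQ_h$ and let $(\boldsymbol{\varphi},\psi)\in\bV\times\mQ$ solve the (self-adjoint) dual source problem with right-hand side $\zeta_h$, i.e. $a(\boldsymbol{\varphi},\btau)+b(\btau,\psi)=0$ for all $\btau\in\bV$ and $b(\boldsymbol{\varphi},v)=-(\zeta_h,v)_{0,\O}$ for all $v\in\mQ$. As in Remark~\ref{rm:1} this gives $\boldsymbol{\varphi}=\nabla\psi$ and, crucially, $\div\boldsymbol{\varphi}=-\zeta_h\in\mQQ_h$; the regularity estimate \eqref{eq:additional_source} then yields $\|\boldsymbol{\varphi}\|_{\widetilde r,\O}+\|\psi\|_{1+\widetilde r,\O}\lesssim\|\zeta_h\|_{0,\O}$. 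I would take the interpolant $\boldsymbol{\varphi}_I\in\bVV_h$ of Lemmas~\ref{lemmainter}--\ref{lemmainterV_I}; since $\div\boldsymbol{\varphi}\in\mQQ_h$ we get $\div\boldsymbol{\varphi}_I=P_k(\div\boldsymbol{\varphi})=\div\boldsymbol{\varphi}$, so that $b(\boldsymbol{\varphi}_I,\cdot)=b(\boldsymbol{\varphi},\cdot)$ on all of $\mQ$.

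First I would fix the working identity. Testing the dual problem with $\zeta_h$ gives $\|\zeta_h\|_{0,\O}^2=-b(\boldsymbol{\varphi},\zeta_h)=-b(\boldsymbol{\varphi}_I,\zeta_h)$; expanding $\zeta_h=\widehat u_h-P_ku$ and inserting the first equation of \eqref{eq:3.3a} with $\btau_h=\boldsymbol{\varphi}_I$ (so $b(\boldsymbol{\varphi}_I,\widehat u_h)=-a_h(\widehat{\bw}_h,\boldsymbol{\varphi}_I)$) turns this into $\|\zeta_h\|_{0,\O}^2=a_h(\widehat{\bw}_h,\boldsymbol{\varphi}_I)+b(\boldsymbol{\varphi}_I,P_ku)$. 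Using $\div\boldsymbol{\varphi}_I=\div\boldsymbol{\varphi}\in\mQQ_h$, the $\L^2$-orthogonality of $u-P_ku$ to $\mQQ_h$, and the primal equation $a(\bw,\boldsymbol{\varphi})=-b(\boldsymbol{\varphi},u)$, I would rewrite $b(\boldsymbol{\varphi}_I,P_ku)=-a(\bw,\boldsymbol{\varphi})$, reaching the key identity $\|\zeta_h\|_{0,\O}^2=a_h(\widehat{\bw}_h,\boldsymbol{\varphi}_I)-a(\bw,\boldsymbol{\varphi})$. I would then decompose this as $[a_h(\widehat{\bw}_h,\boldsymbol{\varphi}_I)-a(\widehat{\bw}_h,\boldsymbol{\varphi}_I)]+a(\widehat{\bw}_h-\bw,\boldsymbol{\varphi}_I)$, noting that the cross term $a(\bw,\boldsymbol{\varphi}_I-\boldsymbol{\varphi})$ vanishes, since $a(\bw,\btau)=-b(\btau,u)$ and $\div(\boldsymbol{\varphi}_I-\boldsymbol{\varphi})=0$.

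It then remains to estimate the two surviving terms, and this is exactly where the gain of $h^{\widetilde r}$ materializes. For the consistency term I would use that $a_h^\E$ and $(\cdot,\cdot)_{0,\E}$ agree on $\widehat\bVV_h^\E$ to write, for arbitrary $\widehat\bp_\E,\widehat{\mathbf{q}}_\E\in\widehat\bVV_h^\E$, $a_h^\E(\widehat{\bw}_h,\boldsymbol{\varphi}_I)-(\widehat{\bw}_h,\boldsymbol{\varphi}_I)_{0,\E}=a_h^\E(\widehat{\bw}_h-\widehat\bp_\E,\boldsymbol{\varphi}_I-\widehat{\mathbf{q}}_\E)-(\widehat{\bw}_h-\widehat\bp_\E,\boldsymbol{\varphi}_I-\widehat{\mathbf{q}}_\E)_{0,\E}$; choosing $\widehat\bp_\E=\bpi_h^\E\widehat{\bw}_h$, $\widehat{\mathbf{q}}_\E=\bpi_h^\E\boldsymbol{\varphi}_I$ and invoking stability of $a_h^\E$ bounds this by $\|\widehat{\bw}_h-\bpi_h\widehat{\bw}_h\|_{0,\O}\,\|\boldsymbol{\varphi}_I-\bpi_h\boldsymbol{\varphi}_I\|_{0,\O}$. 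The first factor is controlled by $\|\bw-\widehat{\bw}_h\|_{0,\O}+\|\bw-\bpi_h\bw\|_{0,\O}$ (inserting $\bw$ and using that $\bpi_h$ is an $\L^2$-projection), while the second, after inserting $\boldsymbol{\varphi}=\nabla\psi$ and applying Lemmas~\ref{lemmainterV_I} and \ref{A}, is $\lesssim h^{\widetilde r}\|\zeta_h\|_{0,\O}$. For the term $a(\widehat{\bw}_h-\bw,\boldsymbol{\varphi}_I)$ I would split $\boldsymbol{\varphi}_I=\boldsymbol{\varphi}+(\boldsymbol{\varphi}_I-\boldsymbol{\varphi})$: the piece against $\boldsymbol{\varphi}_I-\boldsymbol{\varphi}$ is $\lesssim h^{\widetilde r}\|\zeta_h\|_{0,\O}\|\bw-\widehat{\bw}_h\|_{0,\O}$ by Lemma~\ref{lemmainterV_I}, and the piece against $\boldsymbol{\varphi}=\nabla\psi$ is integrated by parts using $\div(\widehat{\bw}_h-\bw)=\lambda(u-P_ku)$ (from Remark~\ref{rm:1} and the second discrete equation), producing $-\lambda(\psi-P_k\psi,u-P_ku)_{0,\O}$, a genuinely higher-order term bounded by $h^{\widetilde r}\|\zeta_h\|_{0,\O}\|\bw-\widehat{\bw}_h\|_{\div,\O}$. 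Collecting all contributions and dividing by $\|\zeta_h\|_{0,\O}$ delivers the claim. I expect the main obstacle to be the bookkeeping inside the consistency term: one must simultaneously extract the $\L^2$-projection error $\|\bw-\bpi_h\bw\|_{0,\O}$ (the contribution special to the VEM, absent in the conforming mixed setting of \cite{BGRS2}) and the $h^{\widetilde r}$ factor coming from the limited regularity of the dual solution via Lemma~\ref{A}, without losing either.
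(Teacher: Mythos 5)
Your proposal is correct and follows essentially the same route as the paper: the same dual mixed problem with datum $\widehat{u}_h-P_ku$, the same commuting-divergence interpolant, the same key identity $\|\widehat{u}_h-P_ku\|_{0,\O}^2=a_h(\widehat{\bw}_h,\cdot)-a(\bw,\cdot)$, and the same three-way split into a VEM-consistency term, an interpolation term, and a divergence term handled via $\L^2$-orthogonality, each estimated with Lemmas~\ref{lemmainter}, \ref{lemmainterV_I} and \ref{A}. The only cosmetic difference is that you exploit $\div\boldsymbol{\varphi}_I=\div\boldsymbol{\varphi}$ to make the cross term $a(\bw,\boldsymbol{\varphi}_I-\boldsymbol{\varphi})$ vanish exactly, whereas the paper keeps $a(\bw,\widetilde{\bw}_I)$ and groups the terms as $A_{\mathrm{I}}+A_{\mathrm{II}}-b(\widehat{\bw}_h-\bw,\widetilde{u}-P_k\widetilde{u})$; both yield the same bound.
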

\begin{proof}
Let $(\widetilde{\bw}, \widetilde{u})\in\bV\times\mQ$ be the unique solution
of the following well posed mixed problem
\begin{equation}\label{eq:mixed_tildes}
\left\{
\begin{array}{rcll}
a(\widetilde{\bw},\btau)+b(\btau,\widetilde{u})&=&0\quad&\forall\,\btau\in\bV, \\
b(\widetilde{\bw},v)&=&-(\widehat{u}_{h}-P_ku,v)_{0,\O}\quad&\forall \,v\in\mQ.
\end{array}
\right.
\end{equation}
Notice that \eqref{eq:mixed_tildes} is exactly problem \eqref{eq:mixST1}--\eqref{eq:mixST2}
with datum $\widehat{u}_{h}-P_ku$. Hence, since this problem is well posed,
we have the following regularity result, consequence of \eqref{eq:additional_source}
%Notice that \eqref{eq:mixed_tildes} is the mixed formulation of the following problem:
%\begin{align*}
%\widetilde{\bw}&=\nabla\widetilde{u}\,\quad\quad\quad\quad\quad\text{in}\,\O,\\
%\div\widetilde{\bw}&=-(\widehat{u}_h-P_ku)\quad\text{in}\,\O,\\
%\widetilde{\bw}\cdot \bn&=0\,\quad\quad\quad\quad\quad\quad\text{on}\,\Gamma.
%\end{align*}
%Moreover, the solution of \eqref{eq:mixed_tildes} satisfies the following additional regularity result
\begin{equation}
\label{eq:joven_regularity}
\|\widetilde{\bw}\|_{\widetilde{r},\O}+\|\widetilde{u}\|_{1+\widetilde{r},\O}\lesssim \|\widehat{u}_{h}-P_ku\|_{0,\O}.
\end{equation}
Observe that, thanks to the definition of $P_k$, the first equation of
Problem~\ref{P2}, and the first equation of \eqref{eq:3.3a}, we have
\begin{multline}
\label{eq:joven1}
 \|\widehat{u}_{h}-P_ku\|_{0,\O}^2=-b(\widetilde{\bw},\widehat{u}_h-P_ku)=-b(\widetilde{\bw}_{I},\widehat{u}_h-P_ku)=-b(\widetilde{\bw}_{I},\widehat{u}_h-u)\\
 =-b(\widetilde{\bw}_I,\widehat{u}_h)+b(\widetilde{\bw}_I,u)=a_h(\widehat{\bw}_h,\widetilde{\bw}_I)-a(\bw,\widetilde{\bw}_I).
\end{multline}
In the last two terms of \eqref{eq:joven1}, we add and subtract $\bpi_h\bw$ in order to obtain
\begin{align*}
a_h(\widehat{\bw}_h,\widetilde{\bw}_I)-a(\bw,\widetilde{\bw}_I)&=\underbrace{a_h(\widehat{\bw}_h-\bpi_h\bw,\widetilde{\bw}_I)+a(\bpi_h\bw-\widehat{\bw}_h,\widetilde{\bw}_I)}_{A_\mathrm{I}}+a(\widehat{\bw}_h-\bw,\widetilde{\bw}_I)\\
&=A_\mathrm{I}+\underbrace{a(\widehat{\bw}_h-\bw,\widetilde{\bw}_I-\widetilde{\bw})}_{A_{\mathrm{II}}}+a(\widehat{\bw}_h-\bw,\widetilde{\bw})\\
&=A_\mathrm{I}+A_\mathrm{II}-b(\widehat{\bw}_h-\bw,\widetilde{u})\\
&=A_\mathrm{I}+A_\mathrm{II}-b(\widehat{\bw}_h-\bw,\widetilde{u}-P_k\widetilde{u})-b(\widehat{\bw}_h-\bw,P_k\widetilde{u}),
\end{align*}
where we have used the first equation of system \eqref{eq:mixed_tildes}. Moreover, testing the second equation in Problem \ref{P2} and \eqref{eq:3.3a} with $P_k\widetilde{u}\in\mQQ_{h}$, we have
$b(\widehat{\bw}_h-\bw,P_k\widetilde{u})=0,$
and hence
\begin{equation*}
\|\widehat{u}_{h}-P_ku\|_{0,\O}^2=A_\mathrm{I}+A_\mathrm{II}-b(\widehat{\bw}_h-\bw,\widetilde{u}-P_k\widetilde{u}).
\end{equation*}

Our next task is to estimate each terms on the right
hand side of the identity above. We begin with $A_\mathrm{I}$.
\begin{align}
\nonumber A_\mathrm{I}&=a_h(\widehat{\bw}_h-\bpi_h\bw,\widetilde{\bw}_I-\bpi_h\widetilde{\bw})+a(\widehat{\bw}_h-\bpi_h\bw,\bpi\widetilde{\bw}-\widetilde{\bw}_I)\\
\nonumber &\lesssim\|\widehat{\bw}_h-\bpi_h\bw\|_{0,\O}\|\widetilde{\bw}_I-\bpi\widetilde{\bw}\|_{0,\O}\\
\nonumber &\lesssim \big(\|\widehat{\bw}_h-\bw\|_{0,\O}+\|\bw-\bpi_h\bw\|_{0,\O}\big)\big(\|\widetilde{\bw}_I-\widetilde{\bw}\|_{0,\O}+\|\widetilde{\bw}-\bpi_h\widetilde{\bw}\|_{0,\O} \big)\\
&\lesssim h^{\widetilde{r}}\big(\|\widehat{\bw}_h-\bw\|_{0,\O}+\|\bw-\bpi_h\bw\|_{0,\O} \big)\|\widehat{u}_h-P_ku\|_{0,\O},\label{eq:cota_A1}
\end{align}
where we have applied  Lemmas \ref{lemmainter} and \ref{A} 
and \eqref{eq:joven_regularity}. Now, for $A_\mathrm{II}$ we have
\begin{equation}
\label{eq:cota_A2}
A_\mathrm{II}=a(\widehat{\bw}_h-\bw,\widetilde{\bw}_I-\widetilde{\bw})\lesssim h^{\widetilde{r}}\|\bw-\widehat{\bw}_h\|_{0,\O}\|\widehat{u}_h-P_ku\|_{0,\O},
\end{equation}
and finally, invoking \eqref{eq:joven_regularity}, we obtain
\begin{equation}
\label{eq:b_cota}
b(\widehat{\bw}_h-\bw, \widetilde{u}-P_k\widetilde{u})\lesssim h^{\widetilde{r}}\|\div(\widehat{\bw}_h-\bw)\|_{0,\O}\|\widehat{u}_h-P_ku\|_{0,\O}.
\end{equation}
Collecting \eqref{eq:cota_A1}, \eqref{eq:cota_A2} and \eqref{eq:b_cota}, we have
\begin{equation*}
\|\widehat{u}_h-P_ku\|_{0,\O}\lesssim h^{\widetilde{r}}\left(\|\bw-\widehat{\bw}_h\|_{\div,\O}+\|\bw-\bpi_h\bw\|_{0,\O}\right), 
\end{equation*}
concluding the proof.
\end{proof}

The following auxiliar result shows that the term $\|\bw-\widehat{\bw}_h\|_{\bV}$ is bounded.
\begin{lemma}
\label{lmm:rodolfo10}
Let $(\lambda, \bw,u)$ be a solution of Problem \ref{P2}
and $(\widehat{\bw}_{h},\widehat{u}_{h})$ be a solution of \eqref{eq:3.3a}. Then, there holds
%\begin{equation*}
%\|\bw-\widehat{\bw}_h\|_{\bV}\lesssim \|\bw-\bw_h\|_{\bV}+\|\bw-\bpi_h\bw\|_{\mQ}+\|u-u_{h}\|_{\mQ},
%\end{equation*}
\begin{equation*}
\|\bw-\widehat{\bw}_h\|_{\div,\O}\lesssim h^r,
\end{equation*}
with $r>1/2 $ as in Lemma \ref{lmm:charact} and  the hidden constant is independent of $h$.
\end{lemma}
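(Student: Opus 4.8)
The plan is to read \eqref{eq:3.3a} as the virtual element discretization of the continuous source problem \eqref{eq:mixST1}--\eqref{eq:mixST2} with datum $g=\l u$, whose exact solution is precisely $(\bw,u)$ (recall from Remark~\ref{rm:1} that $\div\bw=-\l u$). Thus $\|\bw-\widehat{\bw}_h\|_{\div,\O}$ is a source-problem approximation error, which I would split into its $\L^2$ and divergence contributions. For the divergence part, I first note that $\div\widehat{\bw}_h\in\mQQ_h$, so the second equation of \eqref{eq:3.3a} together with $\div\bw=-\l u$ gives $(\div\widehat{\bw}_h,v_h)_{0,\O}=-\l(u,v_h)_{0,\O}=(\div\bw,v_h)_{0,\O}$ for every $v_h\in\mQQ_h$; hence $\div\widehat{\bw}_h=P_k(\div\bw)$. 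Using \eqref{eq:salim} and the regularity $\div\bw\in\H^{1+r}(\O)$ of Lemma~\ref{lmm:charact}, this yields $\|\div(\bw-\widehat{\bw}_h)\|_{0,\O}=\|\div\bw-P_k\div\bw\|_{0,\O}\lesssim h^{\min\{1+r,k+1\}}$, which is of higher order than the target and therefore harmless.

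The core is the $\L^2$ estimate. I would take the interpolant $\bw_I\in\bVV_h$ of Lemma~\ref{lemmainter}, which satisfies $\div\bw_I=P_k(\div\bw)=\div\widehat{\bw}_h$, so that $\bet_h:=\bw_I-\widehat{\bw}_h$ lies in the discrete kernel $\bK_h$, i.e. $\div\bet_h=0$. By the stability (ellipticity) of $a_h$ and the first equation of \eqref{eq:3.3a} tested with $\bet_h$ (for which $b(\bet_h,\widehat{u}_h)=(\widehat{u}_h,\div\bet_h)_{0,\O}=0$), I get
\begin{equation*}
\alpha_*\|\bet_h\|_{0,\O}^2\le a_h(\bet_h,\bet_h)=a_h(\bw_I,\bet_h)-a_h(\widehat{\bw}_h,\bet_h)=a_h(\bw_I,\bet_h).
\end{equation*}
I would then insert the projection $\bpi_h\bw$, writing $a_h(\bw_I,\bet_h)=a_h(\bw_I-\bpi_h\bw,\bet_h)+a_h(\bpi_h\bw,\bet_h)$. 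Since $\bpi_h^\E\bw\in\widehat{\bVV}_h^\E$, the consistency property of $a_h^\E$ gives $a_h(\bpi_h\bw,\bet_h)=(\bpi_h\bw,\bet_h)_{0,\O}=a(\bpi_h\bw-\bw,\bet_h)+a(\bw,\bet_h)$, and the first equation of Problem~\ref{P2} tested with $\bet_h\in\bK_h\subset\bV$ shows $a(\bw,\bet_h)=-b(\bet_h,u)=0$.

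Combining these identities and using the boundedness of $a_h$ and of $a$ (Cauchy--Schwarz together with the stability bound $\alpha^*$), I obtain $\|\bet_h\|_{0,\O}\lesssim\|\bw_I-\bpi_h\bw\|_{0,\O}+\|\bw-\bpi_h\bw\|_{0,\O}\le\|\bw-\bw_I\|_{0,\O}+2\|\bw-\bpi_h\bw\|_{0,\O}$. The interpolation estimate of Lemma~\ref{lemmainterV_I} (with $\bw\in[\H^{r}(\O)]^2$ from Lemma~\ref{lmm:charact}) and Lemma~\ref{lm:A1} bound the right-hand side by $h^{\min\{r,k+1\}}$, and the triangle inequality $\|\bw-\widehat{\bw}_h\|_{0,\O}\le\|\bw-\bw_I\|_{0,\O}+\|\bet_h\|_{0,\O}$ gives the same rate for the full $\L^2$ error. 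Together with the divergence bound this yields $\|\bw-\widehat{\bw}_h\|_{\div,\O}\lesssim h^{\min\{r,k+1\}}$, which is the claimed $h^{r}$ in the relevant regime $r\le k+1$.

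I expect the main obstacle to be the virtual element nonconformity: because $a_h$ does not coincide with $a$, the term $a_h(\bw_I,\bet_h)$ cannot be compared with the continuous problem directly, and the decisive step is to pass through $\bpi_h\bw\in\widehat{\bVV}_h^\E$ so that consistency turns $a_h$ into the exact $\L^2$ inner product. The second delicate point, which makes both the discrete and the continuous coupling terms vanish, is the choice of the divergence-preserving interpolant forcing $\bet_h\in\bK_h$; this is exactly what reduces the argument to the kernel ellipticity of $a_h$ and to the approximation estimates already available in Lemmas~\ref{lemmainter}, \ref{lemmainterV_I} and \ref{lm:A1}.
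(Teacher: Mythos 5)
Your proof is correct, but it follows a genuinely different route from the paper's. The paper compares $\widehat{\bw}_h$ with the discrete eigenfunction: it subtracts \eqref{eq:3.3a} from Problem~\ref{mixted:discretP}, uses the stability of the resulting discrete mixed system to get $\|\bw_h-\widehat{\bw}_h\|_{\div,\O}\lesssim\|\l_h u_h-\l u\|_{0,\O}$, and then concludes by the triangle inequality and the a priori estimates of Theorem~\ref{cotadoblepandeo} for $|\l-\l_h|$, $\|u-u_h\|_{0,\O}$ and $\|\bw-\bw_h\|_{\div,\O}$. You instead treat \eqref{eq:3.3a} directly as the VEM discretization of the source problem \eqref{eq:mixST1}--\eqref{eq:mixST2} with datum $g=\l u$, whose exact solution is $(\bw,u)$, and run a self-contained Galerkin-type error analysis: the identity $\div\widehat{\bw}_h=P_k(\div\bw)$ for the divergence part, and for the $\L^2$ part the divergence-preserving interpolant of Lemma~\ref{lemmainter} so that $\bet_h=\bw_I-\widehat{\bw}_h\in\bK_h$, followed by the stability of $a_h$, the consistency property through $\bpi_h\bw\in\widehat{\bVV}_h^\E$, and the vanishing of both coupling terms $b(\bet_h,\widehat{u}_h)$ and $b(\bet_h,u)$. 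Each step checks out (in particular the passage $a_h(\bpi_h\bw,\bet_h)=(\bpi_h\bw,\bet_h)_{0,\O}$ and $a(\bw,\bet_h)=-b(\bet_h,u)=0$ are exactly what the nonconformity requires), and the needed ingredients are only Lemmas~\ref{lemmainter}, \ref{lemmainterV_I}, \ref{lm:A1} and \eqref{eq:salim}. What each approach buys: yours is independent of Theorem~\ref{cotadoblepandeo} (whose proof is outsourced to the literature) and yields the estimate as a pure source-problem bound; the paper's is shorter once the a priori spectral estimates are taken for granted, at the cost of routing a source-problem statement through eigenvalue convergence. Both give $h^{\min\{r,k+1\}}$, which matches the stated $h^r$ under the implicit assumption $r\le k+1$, a caveat you correctly flag and which the paper's own proof shares.
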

\begin{proof}
%From Problems \ref{P2} and \ref{mixted:discretP}  we have
%\begin{equation*}
%\|\bw-\bw_{h}\|_{\bV}^{2}= \|\bw-\bw_{h}\|_{\mQ}^{2}+\|\l_{h}u_{h}-\l u\|_{\mQ}^{2}.
%\end{equation*}
Let  $(\lambda_{h}, \bw_{h},u_{h})$ be  solution of Problem \ref{mixted:discretP}.
Now, subtracting \eqref{eq:3.3a} from Problem \ref{mixted:discretP}  we obtain
\begin{equation*}
\left\{
\begin{array}{rcll}
a_h(\bw_{h}-\widehat{\bw}_h,\btau_h)+b(\btau_h,u_{h}-\widehat{u}_h)&=&0\quad&\forall\,\btau_h\in\bVV_h, \\
b(\bw_{h}-\widehat{\bw}_h,v_h)&=&(\l u-\l_{h}u_{h},v_h)_{0,\O}\quad&\forall \,v_{h}\in\mQQ_{h}.
\end{array}
\right.
\end{equation*}
First, using the inf-sup condition for bilinear form $b(\cdot,\cdot)$ (cf. \eqref{cont-infsup}) we have
\begin{equation*}
\|u_{h}-\widehat{u}_h\|_{0,\O}\le\|\bw_{h}-\widehat{\bw}_{h}\|_{0,\O}.
\end{equation*}
Thus,
\begin{equation*}
\|\bw_{h}-\widehat{\bw}_{h}\|_{0,\O}\lesssim\|\l_{h}u_{h}-\l u\|_{0,\O}.
\end{equation*}
Moreover from the second equation, we get
$$\Vert\div(\bw_{h}-\widehat{\bw}_{h})\Vert_{0,\O}
\le \Vert\l_h u_h-\l u\Vert_{0,\O}.$$
%\begin{equation*}
%\|\bw_{h}-\widehat{\bw}_{h}\|_{\bV}\lesssim\|\l_{h}u_{h}-\l u\|_{\mQ}.
%\end{equation*}
On the other hand, from the triangle inequality and the above estimates, we obtain
\begin{align*}
\|\bw-\widehat{\bw}_{h}\|_{\div,\O}&\leq  \|\bw-\bw_{h}\|_{\div,\O}+\|\bw_{h}-\widehat{\bw}_{h}\|_{\div,\O}\\
&\lesssim  \|\bw-\bw_{h}\|_{\div,\O}+\|\l_{h}u_{h}-\l u\|_{0,\O}\\
&\lesssim \|\bw-\bw_{h}\|_{\div,\O}+|\l_{h}-\l |\|u_{h}\|_{0,\O}+|\l|\|u_{h}- u\|_{0,\O},
%&\lesssim h^{\min\{r,k+1\}},
%&\lesssim \|\bw-\bw_{h}\|_{\bV}+|\l_{h}-\l |\|u_{h}\|_{\mQ}+|\l|\|u_{h}- u\|_{\mQ}\\
%&\lesssim  \|\bw-\bw_{h}\|_{\bV}+\|u_{h}- u\|_{\mQ}+\left(\|\bw-\bw_h\|_{\bV}^2+\|u-u_h\|_{\mQ}^2+\|\bw-\bpi_h\bw\|_{\mQ}^2 \right) \\
%&\lesssim \|\bw-\bw_{h}\|_{\bV}+\|u_{h}- u\|_{\mQ}+\|\bw-\bpi_h\bw\|_{\mQ}, 
\end{align*}
where using  \eqref{eq:l_lh} we conclude the proof.
\end{proof}

We have the following essential identity to conclude the superconvergence result
presented in Lemma~\ref{lmm:rodolfo11}.

%The following identity, proved in \cite[Section 4]{BGRS2} for the mixed problem related to the Maxwell's spectral problem, also holds for our equivalent mixed problem for the acoustic problem
\begin{multline}\label{eq:lemma3.7}
-\lambda_h(\widehat{u}_h,u_h)=-\lambda_h(u_h,\widehat{u}_h)=b(\bw_h,\widehat{u}_h)\\
=-a_h(\widehat{\bw}_h,\bw_h)
=-a_h(\bw_h,\widehat{\bw}_h)=b(\widehat{\bw}_h,u_h)=-\lambda(u,u_h).
\end{multline}
% The identity  above is essential to prove the desire superconvergence result.
\begin{lemma}
\label{lmm:rodolfo11}
Let $(\lambda, \bw,u)$ and $(\lambda_{h}, \bw_{h},u_{h})$ be  solutions of Problems \ref{P2} and \ref{mixted:discretP}, respectively, with $\|u\|_{0,\O}=\|u_{h}\|_{0,\O}=1$. Then, there holds
%\begin{equation*}
%\|P_ku-u_{h}\|_{\mQ}\lesssim h^{\widetilde{r}}\left(\|\bw-\bw_h\|_{\HdivO}+\|\bw-\bpi_h\bw\|_{\mQ}+\|u-u_h\|_{\mQ}\right),
%\end{equation*}
\begin{equation*}
\|P_ku-u_{h}\|_{0,\O}\lesssim h^{2\widetilde{r}},
\end{equation*}
where $\widetilde{r}\in (\frac{1}{2},1]$ and the hidden constant are independent of $h$.
\end{lemma}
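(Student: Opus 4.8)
The plan is to introduce the auxiliary discrete solution $\widehat{u}_h$ of \eqref{eq:3.3a} as an intermediary and to split
\begin{equation*}
\|P_ku-u_h\|_{0,\O}\le\|P_ku-\widehat{u}_h\|_{0,\O}+\|\widehat{u}_h-u_h\|_{0,\O}.
\end{equation*}
The first summand is already within reach: Lemma~\ref{lmm:rodolfo9} bounds it by $h^{\widetilde{r}}\big(\|\bw-\widehat{\bw}_h\|_{\div,\O}+\|\bw-\bpi_h\bw\|_{0,\O}\big)$, and inserting $\|\bw-\widehat{\bw}_h\|_{\div,\O}\lesssim h^{r}$ (Lemma~\ref{lmm:rodolfo10}) together with $\|\bw-\bpi_h\bw\|_{0,\O}\lesssim h^{\min\{r,k+1\}}$ (Lemma~\ref{lm:A1}) gives, since $\min\{r,k+1\}\ge\widetilde{r}$, the superconvergent bound $\|P_ku-\widehat{u}_h\|_{0,\O}\lesssim h^{2\widetilde{r}}$. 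Thus the whole difficulty is concentrated in the second summand $\|\widehat{u}_h-u_h\|_{0,\O}$, and this is exactly where the essential identity \eqref{eq:lemma3.7} enters.

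To estimate $\|\widehat{u}_h-u_h\|_{0,\O}$ I would decompose this function, in the $\L^2(\O)$ inner product, into the part parallel to the normalized discrete eigenfunction $u_h$ and the orthogonal remainder. The parallel part is governed by the essential identity: \eqref{eq:lemma3.7} reads $\l_h(\widehat{u}_h,u_h)_{0,\O}=\l(u,u_h)_{0,\O}$, whence
\begin{equation*}
(\widehat{u}_h-u_h,u_h)_{0,\O}=\Big(\tfrac{\l}{\l_h}-1\Big)(u,u_h)_{0,\O}-\big(1-(u,u_h)_{0,\O}\big).
\end{equation*}
Since $|\l-\l_h|\lesssim h^{2\min\{r,k+1\}}$ by Theorem~\ref{cotadoblepandeo}, and the normalizations $\|u\|_{0,\O}=\|u_h\|_{0,\O}=1$ give $1-(u,u_h)_{0,\O}=\tfrac12\|u-u_h\|_{0,\O}^2\lesssim h^{2\min\{r,k+1\}}$, the parallel component is $O(h^{2\widetilde{r}})$.

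The remaining orthogonal component is the decisive point and the main obstacle. Here I would pass to the discrete solution operator $T_h$ on $\mQQ_h$, which is self-adjoint in $\L^2(\O)$ because $a_h$ and $b$ are symmetric, which has $u_h$ as eigenfunction for the eigenvalue $1/\l_h$, and which satisfies $\widehat{u}_h=\l T_h u=\l T_h P_ku$ (the datum $u-P_ku$ being $\L^2(\O)$-orthogonal to $\mQQ_h$). Writing $P_ku=\beta u_h+z$ with $z$ orthogonal to $u_h$, the operator $\l T_h-I$ leaves $u_h^{\perp}$ invariant and, since $\l$ is a simple eigenvalue separated from the rest of the spectrum uniformly in $h$, is boundedly invertible there; hence $\|z\|_{0,\O}\lesssim\|(\l T_h-I)z\|_{0,\O}$, and this last quantity is precisely the $u_h^{\perp}$-part of $\widehat{u}_h-P_ku=(\l T_h-I)P_ku$, already shown to be $O(h^{2\widetilde{r}})$. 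Together with $\beta-1=(u,u_h)_{0,\O}-1\lesssim h^{2\min\{r,k+1\}}$ this yields $\|P_ku-u_h\|_{0,\O}^2=(\beta-1)^2+\|z\|_{0,\O}^2\lesssim h^{4\widetilde{r}}$, i.e.\ the claimed order $h^{2\widetilde{r}}$. I expect the genuinely hard part to be justifying the uniform spectral separation (the boundedness of $(\l T_h-I)^{-1}$ on $u_h^{\perp}$): a plain triangle or inf--sup estimate delivers only $\|\widehat{u}_h-u_h\|_{0,\O}=O(h^{r})$, or a squared bound that loses half the rate under the square root, so the simplicity of $\l$ together with the identity \eqref{eq:lemma3.7} must be exploited to extract the sharp order.
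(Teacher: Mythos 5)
Your proposal is correct and follows essentially the same route as the paper: the same triangle-inequality splitting through $\widehat{u}_h$, with the first term handled exactly as here via Lemmas~\ref{lmm:rodolfo9}, \ref{lmm:rodolfo10} and \ref{lm:A1}, and the second term handled by the decomposition along $u_h$ and its orthogonal complement, using the identity \eqref{eq:lemma3.7} for the parallel part and the discrete spectral gap for the rest --- which is precisely the argument of \cite[Lemma 11]{BGRS2} that the paper invokes without writing it out. The one step you flag as delicate, the uniform bounded invertibility of $\l T_h-I$ on $u_h^{\perp}\cap\mQQ_h$, is indeed the content of that cited lemma and follows from the simplicity of $\l$ together with the spectral convergence already established in Theorem~\ref{cotadoblepandeo}, so there is no gap.
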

\begin{proof}
Let $(\widehat{\bw}_{h},\widehat{u}_{h})$ be the solution of \eqref{eq:3.3a}. From the triangle inequality we have
\begin{equation}
\label{eq:joven-cota}
\|P_ku-u_{h}\|_{0,\O}\leq \|P_{k}u-\widehat{u}_{h}\|_{0,\O}+\|\widehat{u}_{h}-u_{h}\|_{0,\O}.
\end{equation} 
Now, adapting the arguments of \cite[Lemma 11]{BGRS2} and  using \eqref{eq:lemma3.7}, we derive the following estimate
\begin{equation*}
\label{eq:lmm_11}
\|\widehat{u}_{h}-u_{h}\|_{0,\O}^{2}\lesssim \|\widehat{u}_{h}-P_ku\|_{0,\O}^{2}+\left[\|\bw-\bw_h\|_{\div,\O}^2+\|u-u_h\|_{0,\O}^2+\|\bw-\bpi_h\bw\|_{0,\O}^2\right]^{2}.
\end{equation*}
Finally, from \eqref{eq:joven-cota}, the above estimate, together with Lemmas~\ref{lm:A1}, \ref{lmm:rodolfo9}, \ref{lmm:rodolfo10}, and Theorem \ref{cotadoblepandeo}, we conclude the proof.
\end{proof}
%\begin{corollary}
%\label{superconvergence}
%For $h$ small enough, there exists $C>0$, independent of $h$, such that
%\begin{equation*}
%\|\P_{k}u-u_{h}\|_{\mQ}\leq C\left(h^{2\min\{\widetilde{r},k+1\}}+h^{r+\min\{\widetilde{r},k+1\}}\right).
%\end{equation*}
%\end{corollary}
%\begin{proof}
%Is a direct consequence of Lemmas  \ref{lmm:doble1} and \ref{lmm:superconv}.
%\end{proof}}
 
\section{A posteriori error analysis}
\label{sec:apost}
In this section, we develop an a posteriori error
estimator for the acoustic  eigenvalue problem \eqref{def:acoustic_problem}. The a posteriori error estimator that we will propose is of residual type and our goal is to prove that is reliable and efficient. 

 Let us introduce some notations and definitions. For any polygon $\E\in\mathcal{T}_h$
 we denote by $\mathcal{S}_\E$ the set of edges of $\E$ and
\begin{equation*}
\mathcal{S}:=\bigcup_{\E\in\mathcal{T}_h}\mathcal{S}_\E.
\end{equation*}

We decompose $\mathcal{S}$ as $\mathcal{S}:=\mathcal{S}_\O\cup\mathcal{S}_{\Gamma}$, where $\mathcal{S}_{\Gamma}:=\{\ell \in\mathcal{S}\,:\,\ell\subset\Gamma\}$ and $\mathcal{S}_\O:=\mathcal{S}\setminus\mathcal{S}_{\Gamma}$. On the other hand, given $\bxi\in \L^2(\O)^2$, for each $\E\in\CT_h$ and $\ell\in\mathcal{S}_\O$, we denote by $\jump{\bxi\cdot\boldsymbol{t}}$ the tangential jump of $\bxi$ across $\ell$, that is $\jump{\bxi\cdot\boldsymbol{t}}:=(\bxi|_\E-\bxi|_{\E'})|_{\ell}\cdot\boldsymbol{t}$, where $\E$ and $\E'$ are elements of $\CT_h$ having $\ell$ as a common edge.
%Also, for each inner edge $e\in\mathcal{S}_\O$ and for any sufficiently smooth function $v$, we define the jump of its normal derivative on $\ell$, where $\ell$ is a common edge between two polygons $\E, \E'\in\mathcal{T}_h$,with $n_\E$ and $n_{\E'}$ the corresponding outer unit vectors,  by
%
%\begin{equation*}
%\jump{\frac{\partial v}{\partial n}}_{\ell}:=\nabla(v|_\E)\cdot n_\E+\nabla(v|_{\E'})\cdot n_{\E'}.
%\end{equation*}
%\GR{Es necesaria esta definici\'on? no tenemos salto en la componente normal} \FL{claramente no es necsariia joven, debemos colocar lo asociado a la componente tangencial}
Due the regularity assumptions on the mesh, for each polygon $\E\in\mathcal{T}_h$ there is a sub-triangulation
$\mathcal{T}_h^\E$ obtained by joining each vertex of $\E$ with the midpoint of
the ball with respect to which $\E$ is star-shaped. We define
\begin{equation*}
\displaystyle\widehat{\mathcal{T}}_h:=\bigcup_{\E\in\mathcal{T}_h}\mathcal{T}_h^\E.
\end{equation*}

%Notice that $\{\widehat{\mathcal{T}}_h\}$ is a star-shaped family of triangulations of $\O$. 

For each polygon $\E$, we define the following computable and local terms
\begin{equation}
\label{eq:residual_terms}
\boldsymbol{R}_{\E}^{2}:=h_{\E}^2\left\|\rot\bpi_h^\E\bw_h\right\|_{0,\E}^2,\quad
\boldsymbol{\theta}_{\E}^{2}:=S^{\E}(\bw_h-\bpi_h^\E \bw_h,\bw_h-\bpi_h^\E \bw_h),\quad
\boldsymbol{J}_{\ell}:=\jump{\bpi_h^\E\bw_h\cdot\boldsymbol{t}},
\end{equation}
which allows us to define the local error indicator
\begin{equation}
\label{eq:local_ind}
\boldsymbol{\eta}_\E:=\boldsymbol{R}_{\E}^2+\boldsymbol{\theta}_{\E}^{2}+\sum_{\ell\in\GR{\mathcal{S}_\E}}h_{\E}\left\|\boldsymbol{J}_{\ell}\right\|_{0,\ell}^{2},
\end{equation}
%\boldsymbol{\eta}_\E&:=h_{\E}^2\left\|\rot\bpi_h^\E\bw_h\right\|_{0,\E}^2
%+\sum_{\ell\in\mathcal{E}_h}
%h_{\ell}\left\|\jump{\bpi_h^\E\bw_h\cdot\boldsymbol{t}} \right\|_{0,\ell}^2
%+\left\|\bw_h-\bpi_h^\E \bw_h\right\|_{0,K}^2.
and hence, the global error estimator
\begin{equation}
\label{eq:global_est}
\boldsymbol{\eta}:=\left\{\sum_{\E\in\mathcal{T}_h}\boldsymbol{\eta}_\E^2\right\}^{1/2}.
\end{equation}

In what follows we will prove that \eqref{eq:global_est} is reliable and locally efficient. With this aim,  we begin by decomposing  the error $\bw-\bw_{h}$, using the classical Helmholtz decomposition  as follows:
\begin{equation*}
\bw-\bw_h=\nabla\psi+\curl\beta,
\end{equation*}
with $\psi\in \tilde{\H}^1(\O):=\{v\in\H^1(\O): (v,1)_{0,\O}=0\}$
and $\beta\in \H_{0}^{1}(\O)$. Moreover,  the following regularity result holds
\begin{equation}
\label{eq:controlH}
\|\psi\|_{1,\O}+\|\curl\beta\|_{0,\O}\lesssim \|\bw-\bw_{h}\|_{0,\O}.
\end{equation}
With this decomposition at hand, we split the $\L^2(\O)$-norm
of the error $\bw-\bw_{h}$ into two terms,
\begin{equation}\label{eq:helm}
\Vert\bw-\bw_{h}\Vert_{0,\O}^2=
(\bw-\bw_{h},\nabla\psi)_{0,\O}+
(\bw-\bw_{h},\curl\beta)_{0,\O}.
\end{equation}
To conclude the reliability, we begin by proving the  following results

\begin{lemma}
\label{lmm:grad_bound}
There holds
%\begin{equation*}
%(\bw-\bw_{h},\nabla\psi)_{\mQ}\lesssim
%h^{\widetilde{r}}\left(\|\bw-\bw_h\|_{\HdivO}+\|\bw-\bpi_h\bw\|_{\mQ}+\|u-u_h\|_{\mQ}\right)\|\bw-\bw_h\|_{\mQ},
%\end{equation*}
\begin{equation*}
(\bw-\bw_{h},\nabla\psi)_{0,\O}\lesssim
h^{2\widetilde{r}}\|\bw-\bw_h\|_{0,\O},
\end{equation*}
where $\widetilde{r}\in (\frac{1}{2},1]$ and the hidden constant are independent of $h$.
\end{lemma}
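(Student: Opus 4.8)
The plan is to exploit the gradient structure of the test function $\nabla\psi$ together with the superconvergence result from Lemma~\ref{lmm:rodolfo11}. The key observation is that, by Remark~\ref{rm:1}, the continuous eigenfunction satisfies $\bw=\nabla u$ and $\div\bw=-\lambda u$, so the quantity $(\bw-\bw_h,\nabla\psi)_{0,\O}$ should be controllable by integrating by parts and transferring the derivative onto $\psi$, producing a divergence term. First I would integrate by parts in each term: since $\psi\in\tilde{\H}^1(\O)$ and $(\bw-\bw_h)\cdot\bn=0$ on $\Gamma$ (both $\bw$ and $\bw_h$ live in $\bV$), we have
\begin{equation*}
(\bw-\bw_h,\nabla\psi)_{0,\O}=-(\div(\bw-\bw_h),\psi)_{0,\O}.
\end{equation*}
This reduces the estimate to controlling the divergence of the error, tested against $\psi$.

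Next I would use the second equations of Problems~\ref{P2} and~\ref{mixted:discretP} to rewrite the divergence. Recall that $\div\bw=-\lambda u$ and, from the discrete mixed formulation, the divergence of $\bw_h$ is related to $-\lambda_h u_h$ through the $\L^2$-projection. The goal is to replace $\div(\bw-\bw_h)$ by a difference involving $\lambda u-\lambda_h u_h$ and then subtract off the $\L^2$-projection $P_k$, using that $\psi$ can be approximated by $P_k\psi\in\mQQ_h$ at the cost of a high order term. More precisely, I would insert $P_k\psi$ and write
\begin{equation*}
(\div(\bw-\bw_h),\psi)_{0,\O}=(\div(\bw-\bw_h),\psi-P_k\psi)_{0,\O}+(\div(\bw-\bw_h),P_k\psi)_{0,\O},
\end{equation*}
handling the first term by the approximation property \eqref{eq:salim} together with the a priori bound on $\|\div(\bw-\bw_h)\|_{0,\O}$ from \eqref{eq:l_lh}, and the second term by invoking the weak divergence identities to turn it into a combination of $\|P_ku-u_h\|_{0,\O}$ and $|\lambda-\lambda_h|$, both of which are of order $h^{2\widetilde{r}}$ by Lemma~\ref{lmm:rodolfo11} and Theorem~\ref{cotadoblepandeo}.

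The main obstacle I anticipate is the bookkeeping in the second term: one must carefully use that $\div\bw_h$ is a polynomial on each element (so testing against $P_k\psi$ is exact) while $\div\bw=-\lambda u$ is not, and then reconcile the mismatch between $\lambda u$ and $\lambda_h u_h$ so that the superconvergent quantity $\|P_ku-u_h\|_{0,\O}\lesssim h^{2\widetilde{r}}$ actually appears rather than the merely first-order $\|u-u_h\|_{0,\O}$. The crucial point is that $P_k\psi$ projects away exactly the non-polynomial discrepancy, so the surviving term pairs $\div(\bw-\bw_h)$ with a discrete function and collapses, via the second equations of the two mixed problems, onto $\lambda u-\lambda_h u_h$ tested against $P_k\psi$; splitting $\lambda u-\lambda_h u_h=\lambda(u-u_h)+(\lambda-\lambda_h)u_h$ and then replacing $u-u_h$ by $P_ku-u_h$ (absorbing $u-P_ku$ into a high order remainder through \eqref{eq:salim}) is what yields the factor $h^{2\widetilde{r}}$. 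Collecting all contributions and factoring out $\|\bw-\bw_h\|_{0,\O}$ via \eqref{eq:controlH} gives the stated bound.
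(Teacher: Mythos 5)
Your proposal is correct and follows essentially the same route as the paper's proof: integrate by parts using the boundary condition, replace $\div\bw=-\lambda u$ and $\div\bw_h=-\lambda_h u_h$, and then reduce everything to $|\lambda-\lambda_h|$ (controlled by \eqref{eq:l_lh}) and the superconvergent quantity $\|P_ku-u_h\|_{0,\O}\lesssim h^{2\widetilde{r}}$ from Lemma~\ref{lmm:rodolfo11}, with $\|\psi\|_{1,\O}$ absorbed via \eqref{eq:controlH}. The only difference is bookkeeping: you split $\psi=(\psi-P_k\psi)+P_k\psi$ at the outset, whereas the paper splits $\lambda u-\lambda_h u_h$ into the three terms $\mathbf{I}$, $\mathbf{II}$, $\mathbf{III}$ and only uses $P_k\psi$ inside term $\mathbf{II}$; both decompositions produce the same pieces. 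One step of your argument is justified too weakly as written: for the contribution $\lambda(u-P_ku,P_k\psi)_{0,\O}$ you appeal to \eqref{eq:salim}, but Cauchy--Schwarz plus \eqref{eq:salim} only yields $O(h^{\min\{1+r,k+1\}})\|\psi\|_{0,\O}$, which for $k=0$ is merely $O(h)$ and does not reach $h^{2\widetilde{r}}$. The term is in fact exactly zero, since $u-P_ku$ is $\L^2$-orthogonal to $\mQQ_h$ and $P_k\psi\in\mQQ_h$; this orthogonality (equivalently, the paper's trick of writing $(u-P_ku,\psi)_{0,\O}=(u-P_ku,\psi-P_k\psi)_{0,\O}$ to gain a second power of $h$) is the ingredient you need to make that step airtight. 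With that observation inserted, your argument closes.
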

\begin{proof}

An integration by parts reveals that
$$(\bw-\bw_{h},\nabla\psi)_{0,\O}=-(\div(\bw-\bw_{h}),\psi)_{0,\O}+
((\bw-\bw_{h})\cdot\boldsymbol{n},\psi)_{0,\G}.$$
Using that $(\bw-\bw_{h})\cdot\boldsymbol{n}=0$ on $\G$, the fact that $\div\bw=-\lambda u$, $\div\bw_h=-\lambda_h u_h$, and adding and subtracting $\lambda_h(u-P_ku,\psi)_{0,\O}$, we obtain 
\begin{align*}
(\bw-\bw_{h},\nabla\psi)_{0,\O}&=\underbrace{((\lambda-\lambda_h)u,\psi)_{0,\O}}_{\mathbf{I}}
+\underbrace{\lambda_h(u-P_ku,\psi)_{0,\O}}_{\mathbf{II}}+\underbrace{\lambda_h(P_ku-u_h,\psi)_{0,\O}}_{\mathbf{III}}.
\end{align*}

For the term $\mathbf{I}$, we use \eqref{eq:l_lh}, the fact that   $\|u\|_{0,\O}=1$ and \eqref{eq:controlH} in order to obtain
%\begin{align*}
%(\lambda-\lambda_h)(u,\psi)_{\mQ}&\lesssim \left(\|\bw-\bw_h\|_{\HdivO}^2+\|u-u_h\|_{\mQ}^2+\|\bw-\bpi_h\bw\|_{\mQ}^2 \right)\|\psi\|_{\mQ}\\
%&\lesssim h^{\min\{r,k+1\}}\left(\|\bw-\bw_h\|_{\HdivO}+\|u-u_h\|_{\mQ}+\|\bw-\bpi_h\bw\|_{\mQ} \right)\|\bw-\bw_{h}\|_{\mQ},
%\end{align*} 
\begin{align*}
(\lambda-\lambda_h)(u,\psi)_{0,\O}&\lesssim h^{2\min\{r,k+1\}}\|\psi\|_{0,\O}
\lesssim h^{2\min\{r,k+1\}}\|\bw-\bw_{h}\|_{0,\O},
\end{align*} 
with $r>\frac{1}{2}$ as in Lemma \ref{lmm:charact}.
Applying the approximation properties \eqref{eq:salim}  and \eqref{eq:controlH} on $\mathbf{II}$,  we obtain
%\begin{equation*}
%(u-P_ku,\psi)_{\mQ}=(u-P_ku,\psi-P_k\psi)_{\mQ}\lesssim h\|u-P_ku\|_{\mQ}|\psi|_{1,\O}\lesssim h\|u-u_{h}\|_{\mQ}\|\bw-\bw_{h}\|_{\mQ}.
%\end{equation*}
\begin{equation*}
(u-P_ku,\psi)_{0,\O}=(u-P_ku,\psi-P_k\psi)_{0,\O}\lesssim h\|u-P_ku\|_{0,\O}
|\psi|_{1,\O}\lesssim h^2\|\bw-\bw_{h}\|_{0,\O}.
\end{equation*}
Finally, for $\mathbf{III}$, we use Lemma~\ref{lmm:rodolfo11} and the bound for $\|\psi\|_{0,\O}$ to write
%\begin{align*}
%\lambda_h(P_ku-u_h,\psi)_{\mQ}\lesssim h^{\widetilde{r}}\left(\|\bw-\bw_h\|_{\HdivO}+\|\bw-\bpi_h\bw\|_{\mQ}+\|u-u_h\|_{\mQ}\right)\|\bw-\bw_{h}\|_{\mQ}.
%\end{align*}
\begin{align*}
\lambda_h(P_ku-u_h,\psi)_{0,\O}\lesssim h^{2\widetilde{r}}\|\bw-\bw_{h}\|_{0,\O}.
\end{align*}

Now combining the above estimates we conclude the proof.
%Finally, applying the double order of convergence for the eigenvalues provided by Theorem \ref{cotadoblepandeo}, Corollary \ref{superconvergence} and the facts that $\|u\|_{\mQ}=1$. and $|\psi|_{1}\leq \|\boldsymbol{e}_{\bw}\|_{\mQ}$, we conclude the proof.
\end{proof}
Given $k\in\mathbb{N}\cup\{0\}$,  let us consider  the following virtual discrete subspace of $\H^{1}(\O)$ 
$$V_{h}:=\left\{\zeta\in \H^{1}(\O): \Delta \zeta\in \bbP_{k-1}(\E)\quad  \forall \E\in\CT_{h}, \zeta\in \mathcal{C}(\partial \E): \zeta|_{\ell}\in \bbP_{k+1}(\ell),\quad \forall\text{ edge}\,\, \ell\subset \partial\E\right\}.$$
Then, there exists $\zeta_{I}\in  V_{h}$  that satisfies (see the proof of  \cite[Lemma 3.4]{MRR_apost})
\begin{equation*}
\|\zeta-\zeta_I\|_{0,\ell}\lesssim h_\ell^{1/2}\|\zeta\|_{1,\E}\quad\text{and}\quad\|\zeta-\zeta_I\|_{0,\E}\lesssim h_\E\|\zeta\|_{1,\E}\qquad \forall \zeta\in \H^{1}(\E).
\end{equation*}
%where
%\begin{equation*}
%\omega_\Lambda:=\bigcup \bigg\{\E'\in\mathcal{T}_h\,:\, \E'\cap\Lambda\neq \emptyset\bigg\},
%\end{equation*}
%for $\Lambda=\E$ or $\Lambda=\ell$.
With this result at hand, now we prove the following result.
\begin{lemma}
\label{lmm:curl_bound}
There holds
$$(\bw-\bw_{h},\curl\beta)_{0,\O}\lesssim \boldsymbol{\eta}\Vert\curl\beta\Vert_{0,\O},$$
where the hidden constant is independent of $h$ and the discrete solution.
\end{lemma}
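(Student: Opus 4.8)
The plan is to reduce the curl term to computable residuals by replacing the virtual solution $\bw_h$ with its projection $\bpi_h\bw_h$ and integrating by parts element by element. First, since $\bw=\nabla u$ by Remark~\ref{rm:1} and $\beta\in\H_{0}^{1}(\O)$, the field $\curl\beta$ is divergence free and has vanishing normal trace on $\G$, so $(\bw,\curl\beta)_{0,\O}=(\nabla u,\curl\beta)_{0,\O}=0$ and therefore $(\bw-\bw_{h},\curl\beta)_{0,\O}=-(\bw_{h},\curl\beta)_{0,\O}$. Writing $\bw_h=\bpi_h\bw_h+(\bw_h-\bpi_h\bw_h)$, I would bound the contribution of $\bw_h-\bpi_h\bw_h$ by Cauchy--Schwarz together with the stabilization equivalence \eqref{eq:20}, namely $\|\bw_h-\bpi_h^{\E}\bw_h\|_{0,\E}^{2}\lesssim\boldsymbol{\theta}_{\E}^{2}$, obtaining a contribution dominated by $(\sum_{\E}\boldsymbol{\theta}_{\E}^{2})^{1/2}\|\curl\beta\|_{0,\O}\lesssim\boldsymbol{\eta}\,\|\curl\beta\|_{0,\O}$.

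For the computable remainder $-(\bpi_h\bw_h,\curl\beta)_{0,\O}$ I would integrate by parts on each polygon $\E\in\CT_h$. The volume contributions assemble into $\sum_{\E}\int_{\E}\rot(\bpi_h^{\E}\bw_h)\,\beta$, which produces $\boldsymbol{R}_{\E}$, while the edge contributions of the two polygons sharing an interior edge combine into the tangential jumps $\sum_{\ell\in\mathcal{E}_{\O}}\int_{\ell}\jump{\bpi_h^{\E}\bw_h\cdot\boldsymbol{t}}\,\beta$, the boundary edges dropping out because $\beta|_{\G}=0$. To recover the mesh weights in \eqref{eq:residual_terms}, I would subtract the quasi-interpolant $\beta_{I}\in\Vh$ and estimate the $(\beta-\beta_{I})$ parts with the interpolation bounds $\|\beta-\beta_{I}\|_{0,\E}\lesssim h_{\E}\|\beta\|_{1,\E}$ and $\|\beta-\beta_{I}\|_{0,\ell}\lesssim h_{\ell}^{1/2}\|\beta\|_{1,\E}$ recalled just before the statement. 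Cauchy--Schwarz, the finite overlap of the element patches and the definition \eqref{eq:global_est} then yield a bound of the form $\boldsymbol{\eta}\,\|\beta\|_{1,\O}$, and the Poincaré inequality in $\H_{0}^{1}(\O)$ gives $\|\beta\|_{1,\O}\lesssim\|\curl\beta\|_{0,\O}$, which is the desired shape for this portion.

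The delicate point, and what I expect to be the main obstacle, is the consistency term $(\bpi_h\bw_h,\curl\beta_{I})_{0,\O}$ produced by the $\beta_{I}$ piece. Since $\curl\beta_{I}$ is \emph{not} an element of $\bVV_h$ — its element-wise rotational equals $\Delta\beta_{I}$, which does not vanish in general — no discrete test function is available and there is no direct Galerkin orthogonality to cancel it, so a crude estimate only returns a quadratic quantity $\|\bw-\bw_{h}\|_{0,\O}^{2}$. Following \cite{BGRS2}, I would instead use the continuous orthogonality $(\bw,\curl\beta_{I})_{0,\O}=0$ to rewrite it as $(\bpi_h\bw_h-\bw,\curl\beta_{I})_{0,\O}$, decompose $\bpi_h\bw_h-\bw=\bpi_h(\bw_h-\bw)+(\bpi_h\bw-\bw)$, and control $\|\bpi_h\bw-\bw\|_{0,\O}$ through the superconvergent estimate of Lemma~\ref{lm:A1}. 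Combined with the superconvergence Lemmas~\ref{lmm:rodolfo9}--\ref{lmm:rodolfo11}, which quantify $\|P_{k}u-u_{h}\|_{0,\O}$ and $\|\bw-\widehat{\bw}_h\|_{\div,\O}$, this makes the genuinely new part of the consistency contribution of higher order; the residual quadratic piece is then absorbed once the curl bound is put together with the gradient bound of Lemma~\ref{lmm:grad_bound} in the global reliability estimate. The careful bookkeeping here — including the passage between the tangential jumps of the truly virtual $\bw_h$ and those of the computable $\bpi_h\bw_h$, again absorbed through \eqref{eq:20} — is where the argument must be carried out with attention, and it is precisely the place where the superconvergence machinery of Section~\ref{SEC:Discrete} is indispensable.
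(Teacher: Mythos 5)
Your reduction to $-(\bw_h,\curl\beta)_{0,\O}$ via the orthogonality $(\bw,\curl\beta)_{0,\O}=0$, the splitting $\bw_h=\bpi_h\bw_h+(\bw_h-\bpi_h\bw_h)$ with the stabilization bound \eqref{eq:20}, and the element-wise integration by parts against $\beta-\beta_I$ combined with the interpolation estimates for $\beta_I$ are exactly the steps of the paper's proof. The point of divergence is the consistency term $(\bpi_h\bw_h,\curl\beta_I)_{0,\O}$: the paper treats $\curl\beta_I$ as an admissible divergence-free test field and writes $(\bpi_h\bw_h,\curl\beta)_{0,\O}=(\bpi_h\bw_h,\curl(\beta-\beta_I))_{0,\O}$, so nothing survives and the bound is exactly $\boldsymbol{\eta}\,\|\curl\beta\|_{0,\O}$, with no additional term; you instead rewrite the term as $(\bpi_h\bw_h-\bw,\curl\beta_I)_{0,\O}$ and appeal to superconvergence.

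That replacement is a genuine gap. Neither piece of your decomposition $\bpi_h\bw_h-\bw=\bpi_h(\bw_h-\bw)+(\bpi_h\bw-\bw)$ is of higher order: $\|\bpi_h(\bw_h-\bw)\|_{0,\O}\le\|\bw_h-\bw\|_{0,\O}$ is exactly the error, and Lemma~\ref{lm:A1} is \emph{not} a superconvergence estimate --- it gives $\mathcal{O}(h^{\min\{r,k+1\}})$, the same rate as $\|\bw-\bw_h\|_{\div,\O}$ itself. (The true superconvergence results, Lemmas~\ref{lmm:rodolfo9}--\ref{lmm:rodolfo11}, control $\|P_ku-u_h\|_{0,\O}$ and enter only through the gradient part in Lemma~\ref{lmm:grad_bound}; they cannot see a pairing with the divergence-free field $\curl\beta_I$.) Since $\|\curl\beta_I\|_{0,\O}\lesssim\|\curl\beta\|_{0,\O}\lesssim\|\bw-\bw_h\|_{0,\O}$ by \eqref{eq:controlH}, your version of the consistency term contributes $C\|\bw-\bw_h\|_{0,\O}^2+Ch^{\min\{r,k+1\}}\|\bw-\bw_h\|_{0,\O}$ to \eqref{eq:helm}; after dividing by $\|\bw-\bw_h\|_{0,\O}$ the term $C\|\bw-\bw_h\|_{0,\O}$ reappears on the right with a constant that is not small and cannot be absorbed, and $h^{\min\{r,k+1\}}$ is not a higher-order quantity. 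Hence neither the lemma as stated nor the reliability bound of Lemma~\ref{reliability} would follow. Your instinct that this term is delicate is reasonable (for $k\ge1$ one has $\rot\curl\beta_I=-\Delta\beta_I\ne0$ in general), but the cure is structural rather than asymptotic: in the lowest-order case $\curl\beta_I\in\bVV_h$ is discretely divergence-free, so Problem~\ref{P3} yields $a_h(\bw_h,\curl\beta_I)=0$, and the discrepancy between $a_h(\bw_h,\curl\beta_I)$ and $(\bpi_h\bw_h,\curl\beta_I)_{0,\O}$ reduces, by the orthogonality defining $\bpi_h^\E$, to the stabilization contributions $S^\E(\bw_h-\bpi_h^\E\bw_h,\curl\beta_I-\bpi_h^\E\curl\beta_I)$, which are controlled by the terms $\boldsymbol{\theta}_\E$ already present in $\boldsymbol{\eta}$ --- no power of $h$ and no error norm appears.
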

\begin{proof}
Since $\curl\beta\in \H_{0}(\div^{0};\O)$,
we have that $(\bw,\curl\beta)_{0,\O}=0$. Thus,
\begin{equation}
\label{eq:int1}
(\bw-\bw_{h},\curl\beta)_{0,\O}
=(\bw_h,\curl\beta)_{0,\O}=(\bw_h-\bpi_h\bw_h,\curl\beta)_{0,\O}
+(\bpi_h\bw_h,\curl\beta)_{0,\O}.
\end{equation}
For the first term term on the right-hand side of the above equality we have
\begin{align}\label{eq:int2}
(\bw_h-\bpi_h\bw_h,\curl\beta)_{0,\O}\lesssim \boldsymbol{\eta}\|\curl\beta\|_{0,\O}.
\end{align}
 
 Next, we introduce $\beta_I$ as the virtual interpolant of $\beta$,
and using that $\curl\beta_h\in \H_{0}(\div^{0};\O)$, we have
$$(\bpi_h^\E\bw_h,\curl\beta)_{0,\O}=(\bpi_h^\E\bw_h,\curl(\beta-\beta_I))_{0,\O}
=\sum_{\E\in\mathcal{T}_h}\int_{\E}\bpi_h^\E\bw_h\cdot\curl(\beta-\beta_I).$$
Now, by using integration by parts, we obtain
$$(\bpi_h^\E\bw_h,\curl\beta)_{0,\O}
=\sum_{\E\in\mathcal{T}_h}\left(\int_{\E}\rot\bpi_h^\E\bw_h(\beta-\beta_I)
+\int_{\partial \E}(\bpi_h^\E\bw_h\cdot\boldsymbol{t})(\beta-\beta_I)\right).$$

Hence, applying Cauchy-Schwarz inequality and property of approximation of $\beta_I$ in the estimate above yields to
\begin{equation}\label{eq:int3}
(\bpi_h^\E\bw_h,\curl\beta)_{0,\O}\lesssim \sum_{\E\in\mathcal{T}_h}\boldsymbol{\eta}_\E\|\beta\|_{1,\omega_\E}\leq C\boldsymbol{\eta}\|\curl\beta\|_{0,\O}.
\end{equation}
Now, combining \eqref{eq:int1}, \eqref{eq:int2} and \eqref{eq:int3}  we conclude the proof.

\end{proof}
We now provide an upper bound for our error estimator.
\begin{lemma}
\label{reliability}
The following error estimate holds
%\begin{align}\label{eq:1estimate}
%\|\bw-\bw_{h}\|_{\mQ}&\lesssim \boldsymbol{\eta}+h^{\widetilde{r}}\left(\|\bw-\bw_h\|_{\HdivO}+\|\bw-\bpi_h\bw_{h}\|_{\mQ}+\|u-u_h\|_{\mQ}\right),
%\end{align}
\begin{align*}\label{eq:1estimate}
\|\bw-\bw_{h}\|_{0,\O}&\lesssim \boldsymbol{\eta}+h^{2\widetilde{r}},
\end{align*}
where  the hidden constants are independent of $h$ and the discrete solution.
\end{lemma}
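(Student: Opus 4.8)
The plan is to combine the Helmholtz decomposition already set up in \eqref{eq:helm} with the two preceding lemmas. Since we have written
\begin{equation*}
\|\bw-\bw_h\|_{0,\O}^2=(\bw-\bw_h,\nabla\psi)_{0,\O}+(\bw-\bw_h,\curl\beta)_{0,\O},
\end{equation*}
the natural strategy is to bound each summand separately and then divide through by $\|\bw-\bw_h\|_{0,\O}$. First I would invoke Lemma~\ref{lmm:grad_bound} to control the gradient part by $h^{2\widetilde{r}}\|\bw-\bw_h\|_{0,\O}$, and then Lemma~\ref{lmm:curl_bound} to control the curl part by $\boldsymbol{\eta}\,\|\curl\beta\|_{0,\O}$. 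The regularity estimate \eqref{eq:controlH} gives $\|\curl\beta\|_{0,\O}\lesssim\|\bw-\bw_h\|_{0,\O}$, so the curl term is bounded by $\boldsymbol{\eta}\,\|\bw-\bw_h\|_{0,\O}$ as well.

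Putting these together, the right-hand side of \eqref{eq:helm} is bounded by
\begin{equation*}
\bigl(\boldsymbol{\eta}+h^{2\widetilde{r}}\bigr)\|\bw-\bw_h\|_{0,\O},
\end{equation*}
which matches the left-hand side $\|\bw-\bw_h\|_{0,\O}^2$. Dividing both sides by $\|\bw-\bw_h\|_{0,\O}$ (assuming it is nonzero; otherwise the estimate is trivial) yields exactly the claimed bound $\|\bw-\bw_h\|_{0,\O}\lesssim\boldsymbol{\eta}+h^{2\widetilde{r}}$. This is essentially a one-line assembly once the two component lemmas are in place.

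Since the heavy lifting has already been done in Lemmas~\ref{lmm:grad_bound} and \ref{lmm:curl_bound}, I do not anticipate a genuine obstacle here; the proof is a routine combination. If there is any subtlety, it lies in confirming that the hidden constant from \eqref{eq:controlH} is indeed independent of $h$ and the discrete solution, so that absorbing $\|\curl\beta\|_{0,\O}$ into $\|\bw-\bw_h\|_{0,\O}$ does not smuggle in a mesh-dependent factor. One should also keep in mind that the gradient contribution is a genuinely higher-order term ($h^{2\widetilde{r}}$ with $\widetilde{r}>1/2$), so it is consistent to retain it as the additive high-order remainder rather than trying to absorb it into $\boldsymbol{\eta}$; this is precisely the role played by the superconvergence result of Lemma~\ref{lmm:rodolfo11}, which made Lemma~\ref{lmm:grad_bound} possible in the first place.
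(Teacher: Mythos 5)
Your proposal is correct and follows exactly the paper's argument: the paper's proof likewise cites the Helmholtz splitting \eqref{eq:helm}, Lemmas~\ref{lmm:grad_bound} and \ref{lmm:curl_bound}, and the bound \eqref{eq:controlH} to absorb $\|\curl\beta\|_{0,\O}$, then divides by $\|\bw-\bw_h\|_{0,\O}$. No discrepancies to report.
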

\begin{proof}
The proof is a consequence of \eqref{eq:helm}, Lemmas \ref{lmm:grad_bound} and \ref{lmm:curl_bound}, together to \eqref{eq:controlH}. 
\end{proof}

%\begin{lemma}
%\label{reliabilityu}
%\FL{The following error estimate holds}
%\begin{equation*}
%\| u-u_{h}\|_{\mQ}\lesssim h^r\left(\|\bw-\bw_h\|_{\HdivO}+\|\bw-\bpi_h\bw\|_{\mQ}+\|u-u_h\|_{\mQ}\right),
%\end{equation*}
%\FL{where  the hidden constant is independent of $h$.}
%\end{lemma}
%\begin{proof}
%Using Lemmas \ref{lmm:rodolfo9} and \ref{lmm:rodolfo11}, we have 
%\begin{align*}
%\| u-u_{h}\|_{\mQ}&\leq \| u-\widehat{u}_{h}\|_{\mQ}+\| \widehat{u}_{h}-P_{k}u\|_{\mQ}+\| P_{k}u-u_{h}\|_{\mQ} \\
%&\lesssim   \| u-\widehat{u}_{h}\|_{\mQ}+h^r\left(\|\bw-\bw_h\|_{\HdivO}+\|\bw-\bpi_h\bw\|_{\mQ}+\|u-u_h\|_{\mQ}\right)\\
%&\lesssim h^r\left(\|\bw-\bw_h\|_{\HdivO}+\|\bw-\bpi_h\bw\|_{\mQ}+\|u-u_h\|_{\mQ}\right),
%\end{align*}
%where we have used \eqref{eq:lmm_11} for the last estimate.
%\end{proof}

%\begin{lemma}
%\label{reliabilitydiv}
%\FL{The following error estimate for the divergence holds}
%\begin{equation*}
%\| \div \bw-\div\bw_{h}\|_{\mQ}\lesssim h^r\left(\|\bw-\bw_h\|_{\HdivO}+\|\bw-\bpi_h\bw\|_{\mQ}+\|u-u_h\|_{\mQ}\right).
%\end{equation*}
%\end{lemma}
%
%\begin{proof}
%Using  the fact that $\div\bw=-\lambda u$, $\div\bw_h=-\lambda_h u_h$ and that $\|u\|_{\mQ}=1$, we have
%\begin{align*}
%\| \div \bw-\div\bw_{h}\|_{\mQ}&= \|\l_{h}u_{h}-\l u\|_{\mQ}\leq |\l_{h}|\|u_{h}-u\|_{\mQ}+|\l_{h}-\l|\|u\|_{\mQ}\\ 
%&\lesssim h^r\left(\|\bw-\bw_h\|_{\HdivO}+\|\bw-\bpi_h\bw\|_{\mQ}+\|u-u_h\|_{\mQ}\right),
%\end{align*}
%where we have used  Lemma \ref{reliabilityu}, \eqref{eq:l_lh} and approximation properties.
%\end{proof}
Thanks to the previous lemmas, we have the following result
\begin{lemma}
\label{lmm:cotpi}
The following error estimate holds
%\begin{equation*}
%\|\bw-\bpi_h\bw_{h}\|_{\mQ}\lesssim \boldsymbol{\eta}+\GR{h^{\widetilde{r}}\left(\|\bw-\bw_h\|_{\HdivO}+\|\bw-\bpi_h\bw_{h}\|_{\mQ}+\|u-u_h\|_{\mQ}\right)},
%\end{equation*}
\begin{equation*}
\|\bw-\bpi_h\bw_{h}\|_{0,\O}\lesssim \boldsymbol{\eta}+h^{2\widetilde{r}},
\end{equation*}
where  the hidden constant is independent of $h$.
\end{lemma}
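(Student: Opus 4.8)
The plan is to reduce the estimate to two pieces by a triangle inequality: one piece is the consistency error $\bw-\bw_h$, already controlled by the reliability bound, and the other is the projection defect $\bw_h-\bpi_h\bw_h$, which is exactly what the stabilization term of the estimator measures. First I would write
\begin{equation*}
\|\bw-\bpi_h\bw_{h}\|_{0,\O}\le \|\bw-\bw_{h}\|_{0,\O}+\|\bw_h-\bpi_h\bw_{h}\|_{0,\O},
\end{equation*}
so that the first summand is handled immediately by Lemma~\ref{reliability}, giving $\|\bw-\bw_{h}\|_{0,\O}\lesssim \boldsymbol{\eta}+h^{2\widetilde{r}}$.

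The heart of the argument is the second summand, and this is where the defining property \eqref{eq:20} of the stabilization enters. Since $\bw_h|_\E\in\bVV_h^\E$ and $\bpi_h^\E\bw_h\in\widehat{\bVV}_h^\E\subset\bVV_h^\E$, the difference $\bw_h-\bpi_h^\E\bw_h$ is an admissible argument for the local form $S^\E$, and the lower bound in \eqref{eq:20} yields, element by element,
\begin{equation*}
c_0\,\|\bw_h-\bpi_h^\E\bw_h\|_{0,\E}^2\le S^{\E}\big(\bw_h-\bpi_h^\E\bw_h,\bw_h-\bpi_h^\E\bw_h\big)=\boldsymbol{\theta}_{\E}^{2}.
\end{equation*}
Summing over $\E\in\CT_h$ and recalling that $\boldsymbol{\theta}_{\E}^{2}$ is one of the nonnegative contributions to the local indicator $\boldsymbol{\eta}_\E$ in \eqref{eq:local_ind}, I obtain
\begin{equation*}
\|\bw_h-\bpi_h\bw_{h}\|_{0,\O}^2=\sum_{\E\in\CT_h}\|\bw_h-\bpi_h^\E\bw_h\|_{0,\E}^2\le\frac{1}{c_0}\sum_{\E\in\CT_h}\boldsymbol{\theta}_{\E}^{2}\lesssim\boldsymbol{\eta}^2,
\end{equation*}
the last step being the assembly of the local stabilization terms into the global estimator \eqref{eq:global_est}. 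Hence $\|\bw_h-\bpi_h\bw_{h}\|_{0,\O}\lesssim\boldsymbol{\eta}$, and combining the two bounds gives the claim.

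I do not expect a genuine obstacle here: the lemma is essentially a bookkeeping consequence of the reliability estimate of Lemma~\ref{reliability} together with the norm equivalence \eqref{eq:20}. The only points requiring a little care are verifying that $\bw_h-\bpi_h^\E\bw_h\in\bVV_h^\E$ (so that \eqref{eq:20} may be invoked), which holds because $\widehat{\bVV}_h^\E\subset\bVV_h^\E$, and tracking how the local terms $\boldsymbol{\theta}_{\E}^{2}$ are subsumed into $\boldsymbol{\eta}_\E$ and then into $\boldsymbol{\eta}$.
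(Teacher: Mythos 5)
Your proposal is correct and follows exactly the paper's own (much terser) argument: triangle inequality, the reliability bound of Lemma~\ref{reliability} for $\|\bw-\bw_h\|_{0,\O}$, and the lower bound in \eqref{eq:20} to absorb $\|\bw_h-\bpi_h\bw_h\|_{0,\O}$ into the stabilization part $\boldsymbol{\theta}_\E$ of the estimator. No discrepancies to report.
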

\begin{proof}
From the triangle inequality, together to \eqref{eq:20},   for the stability of the $\bpi_{h}$-projector and Lemma \ref{reliability}, we have
%\begin{align*}
%\|\bw-\bpi_h\bw_{h}\|_{\mQ}&\leq \|\bw-\bw_{h}\|_{\mQ}+\|\bw_{h}-\bpi_h\bw_{h}\|_{\mQ}\\
%&\lesssim  \boldsymbol{\eta}+\GR{h^{\widetilde{r}}\left(\|\bw-\bw_h\|_{\HdivO}+\|\bw-\bpi_h\bw_{h}\|_{\mQ}+\|u-u_h\|_{\mQ}\right)}\\
%\end{align*}
\begin{align*}
\|\bw-\bpi_h\bw_{h}\|_{0,\O}&\leq \|\bw-\bw_{h}\|_{0,\O}+\|\bw_{h}-\bpi_h\bw_{h}\|_{0,\O}\\
&\lesssim  \boldsymbol{\eta}+h^{2\widetilde{r}}.
\end{align*}
Hence, we conclude the proof.
\end{proof}

Now we are in position to establish the reliability of our estimator.
%\GR{In what follows, we prove also an upper bound for the eigenfunction and eigenvalue approximations.}
\begin{corollary}\label{redeal}[Reliability]
The following error estimate hold
\begin{equation*}
\label{eq:cor41}
\|\bw-\bw_{h}\|_{0,\O}+\|\bw-\bpi_h\bw_{h}\|_{0,\O}\lesssim \boldsymbol{\eta}+h^{2\widetilde{r}}.
\end{equation*}
%\FL{and}
%\begin{equation*}
%|\l-\l_h|\lesssim\left[\boldsymbol{\eta}+\GR{h^{\widetilde{r}}\left(\|\bw-\bw_h\|_{\HdivO}+\|\bw-\bpi_h\bw_{h}\|_{\mQ}+\|u-u_h\|_{\mQ}\right)}\right]^{2}
%\end{equation*}
where  the hidden constants are independent of $h$.
\end{corollary}
\begin{remark}
\label{remrk:HOT}
From Corollary~\ref{redeal}, we note that  $\mathcal{O}(h^{2\widetilde{r}})$
can be considered a ``higher order term" when lowest order VEM ($k = 0$)  is  used.
When $k\ge1$, the term can be considered a ``higher order term"
when the eigenfunction is singular. This usualy happens when the eigenproblem
is solved in non-convex polygonal domains.
%lowest order VEM ($k = 0$)  is  used for the discretisation on quasi-uniform meshes. For higher order elements, the term $\mathcal{O}(h^{2\widetilde{r}})$ is asymptotically negligible on quasi-uniform meshes, only when the eigenfunction is singular, as is often the case in non-convex polygonal domains.
%However, on adaptively refined meshes, the term $\mathcal{O}(h^{2\widetilde{r}})$  is not necessarily asymptotically negligible compared to the expected optimal convergence order. It is very likely that additional terms will have to be added to the estimator to remove the  term $\mathcal{O}(h^{2\widetilde{r}})$ and thus achieve a full reliability estimate. This is a subject of ongoing research.
\end{remark}
\subsection{Efficiency}

Now our aim is to prove that the local  indicator $\boldsymbol{\eta}_\E$ defined in \eqref{eq:local_ind} provides a lower bound of the error  $\bw-\bw_h$ in a vicinity of any polygon $\E$. To do this task, we procede as is customary for the efficiency analysis, using suitable bubble functions for the polygons and their edges.

The bubble functions that we will consider are based in \cite{CGPS}. Let $\psi_\E\in \H_0^1(\O)$ be an interior bubble function defined in a polygon $\E$. These bubble functions can be constructed piecewise as the sum of the cubic bubble functions for each triangle of the sub-triangulation $\mathcal{T}_h^\E$ that attain the value $1$ at the barycenter of each triangle. Also, the edge bubble function $\psi_\ell\in\partial \E$ is a piecewise quadratic function attaining the value of $1$ at the barycenter of $\ell$ and vanishing on the triangles $\E\in\widehat{\mathcal{T}}_h$ that do not contain $\ell$ on its boundary.

The following technical results for the bubble functions are a key point to prove the efficiency bound.

\begin{lemma}
\label{lmm:bubble1}
For any $\E\in\mathcal{T}_h$, let $\psi_\E$ be the corresponding interior bubble function. Then, there hold
\begin{align*}
\displaystyle  \|p\|_{0,\E}^2&\lesssim\int_\E\psi_\E p^2\lesssim \|p\|_{0,\E}^2\qquad\forall p\in\mathbb{P}_k(\E);\\
\displaystyle  \|p\|_{0,\E}&\lesssim\|\psi_\E p\|_{0,\E}+h_\E\|\nabla(\psi_\E p)\|_{0,\E}\lesssim\|p\|_{0,\E}\qquad\forall p\in\mathbb{P}_k(\E);
\end{align*}
where the hidden constants are independent of $h_{\E}$
\end{lemma}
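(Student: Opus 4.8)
The plan is to establish the two inequalities in Lemma~\ref{lmm:bubble1} by exploiting the piecewise-polynomial structure of $\psi_\E$ over the sub-triangulation $\mathcal{T}_h^\E$, together with the standard norm-equivalence and inverse estimates that hold on each triangle of $\widehat{\mathcal{T}}_h$. The key observation is that although $\psi_\E$ is defined globally on $\E$ as a gluing of cubic bubbles, on each triangle $T\in\mathcal{T}_h^\E$ it coincides with a genuine cubic bubble function vanishing on $\partial T$, so all the classical Verf\"urth-type bubble estimates apply triangle-by-triangle. Because mesh assumptions $\mathbf{A_1}$ and $\mathbf{A_2}$ guarantee that the sub-triangulation is shape-regular with the number of triangles per polygon uniformly bounded and $h_T\simeq h_\E$, summing the local estimates over $T\in\mathcal{T}_h^\E$ will reproduce the stated bounds on all of $\E$ with constants depending only on $C_{\T}$.

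For the first double inequality, I would fix $p\in\mathbb{P}_k(\E)$ and work on each $T\in\mathcal{T}_h^\E$. The upper bound $\int_\E\psi_\E p^2\lesssim\|p\|_{0,\E}^2$ is immediate since $0\le\psi_\E\le1$. For the lower bound, on a single triangle $T$ one has the equivalence $\|q\|_{0,T}^2\lesssim\int_T\psi_T q^2\lesssim\|q\|_{0,T}^2$ for the triangle bubble $\psi_T$ and any polynomial $q$ of fixed degree, by a scaling argument from a reference triangle where the two quantities are norms on a finite-dimensional space. Since $\psi_\E|_T=\psi_T$, summing over the triangles yields $\|p\|_{0,\E}^2=\sum_T\|p\|_{0,T}^2\lesssim\sum_T\int_T\psi_\E p^2=\int_\E\psi_\E p^2$, as required.

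For the second double inequality, the upper bound follows by combining $\|\psi_\E p\|_{0,\E}\le\|p\|_{0,\E}$ with the inverse inequality $h_\E\|\nabla(\psi_\E p)\|_{0,\E}\lesssim\|\psi_\E p\|_{0,\E}\lesssim\|p\|_{0,\E}$, applied triangle-by-triangle using $h_T\simeq h_\E$; here I would be slightly careful that $\psi_\E p$ is only piecewise polynomial, so the inverse estimate is invoked on each $T$ and then summed, the interface continuity of $\psi_\E p$ causing no difficulty since we only bound $\|\nabla(\psi_\E p)\|_{0,\E}^2=\sum_T\|\nabla(\psi_\E p)\|_{0,T}^2$. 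The lower bound $\|p\|_{0,\E}\lesssim\|\psi_\E p\|_{0,\E}+h_\E\|\nabla(\psi_\E p)\|_{0,\E}$ follows a fortiori from the first part, since by the first double inequality $\|p\|_{0,\E}^2\lesssim\int_\E\psi_\E p^2\le\|\psi_\E^{1/2}p\|_{0,\E}\|\psi_\E^{1/2}p\|_{0,\E}$, and more directly $\|p\|_{0,\E}^2\lesssim\int_\E\psi_\E p^2=(\psi_\E p,p)_{0,\E}\le\|\psi_\E p\|_{0,\E}\|p\|_{0,\E}$, giving $\|p\|_{0,\E}\lesssim\|\psi_\E p\|_{0,\E}$, which already implies the claimed lower bound.

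The main obstacle I anticipate is purely technical rather than conceptual: ensuring that the scaling constants produced by passing to the reference triangle, applying the inverse inequalities, and summing over the uniformly bounded number of sub-triangles remain independent of $h_\E$. This is handled by the shape-regularity encoded in $\mathbf{A_1}$ and $\mathbf{A_2}$, which controls both the aspect ratios of the triangles in $\mathcal{T}_h^\E$ and their number. Provided one is careful to state each local estimate on the reference configuration and transfer it by an affine map with Jacobian comparable to $h_\E^2$, all constants collapse to a dependence on $C_{\T}$ alone, completing the argument.
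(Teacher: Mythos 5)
Your proof is correct. The paper itself gives no proof of this lemma: it is stated as a standard technical fact about interior bubble functions on the sub-triangulation $\mathcal{T}_h^\E$, imported from the VEM a posteriori literature (the construction is attributed to \cite{CGPS}, and the same estimates appear in \cite{MRR_apost}). Your argument --- norm equivalence on a finite-dimensional polynomial space over the reference triangle, transferred by affine scaling, combined with the inverse inequality applied triangle-by-triangle and the observation that $\mathbf{A_1}$--$\mathbf{A_2}$ make the sub-triangulation shape-regular with $h_T\simeq h_\E$ --- is precisely the standard proof those references rely on, and your remark that the left inequality of the second line already follows from $\|p\|_{0,\E}\lesssim\|\psi_\E p\|_{0,\E}$ via Cauchy--Schwarz is a correct simplification.
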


\begin{lemma}
\label{lmm:bubble2}
For any $\E\in\mathcal{T}_h$ and $\ell\in\mathcal{S}_\E$, let $\psi_\ell$ be the corresponding edge bubble function.  Then, there holds
\begin{equation*}
\displaystyle \|p\|_{0,\ell}^2\lesssim\int_{\ell}\psi_\ell p^2\lesssim \|p\|_{0,\ell}^2\qquad\forall p\in\mathbb{P}_k(\ell).
\end{equation*}
Moreover, for all $p\in\mathbb{P}_k(\ell)$, there exists an extension of $p\in\mathbb{P}_k(\E)$, which we denote simply by $p$, such that
\begin{equation*}
h_\E^{-1/2}\|\psi_\ell p\|_{0,\E}+h_\E^{1/2}\|\nabla (\psi_\ell p)\|_{0,\E}\lesssim \|p\|_{0,\ell},
\end{equation*}
where the hidden constants are independent of $h_{\E}$.
\end{lemma}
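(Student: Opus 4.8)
The plan is to prove both inequalities by reducing to a reference configuration, where the equivalence of norms on finite-dimensional polynomial spaces is available, and then transferring back to the physical element via affine scaling, carefully tracking the powers of $h_\E$. The mesh assumptions $\mathbf{A_1}$ and $\mathbf{A_2}$ guarantee that the sub-triangulation $\widehat{\mathcal{T}}_h$ is shape-regular with constants depending only on $C_{\T}$, so that all scaling constants below are uniform in $h$. The conceptual content is the classical bubble-function machinery of residual-type a posteriori analysis, here adapted to the polygonal setting by working triangle-by-triangle on $\widehat{\mathcal{T}}_h$.

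For the first (edge) equivalence, I would observe that, since $\psi_\ell$ is strictly positive on the interior of $\ell$ and vanishes at its endpoints, the bilinear form $(p,q)\mapsto\int_\ell\psi_\ell\,pq$ is an inner product on the finite-dimensional space $\mathbb{P}_k(\ell)$. Pulling back to a reference edge $\widehat{\ell}$ of unit length via the affine map $F_\ell$, both $\int_{\widehat{\ell}}\widehat{\psi}_\ell\,\widehat{p}^{\,2}$ and $\|\widehat{p}\|_{0,\widehat{\ell}}^2$ are norms squared on $\mathbb{P}_k(\widehat{\ell})$; by equivalence of norms in finite dimension they are comparable, with a constant depending only on $k$. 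Since the Jacobian of $F_\ell$ contributes the same length factor $h_\ell\sim h_\E$ to both integrals, the equivalence survives the change of variables, yielding $\|p\|_{0,\ell}^2\lesssim\int_\ell\psi_\ell p^2\lesssim\|p\|_{0,\ell}^2$.

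For the second part, I would first extend $p\in\mathbb{P}_k(\ell)$ to a polynomial on $\E$, for instance the constant extension in the direction normal to $\ell$, so that the extension (still called $p$) belongs to $\mathbb{P}_k(\E)$ and satisfies $\|p\|_{L^\infty(\E)}\lesssim\|p\|_{L^\infty(\ell)}$. The product $\psi_\ell p$ is then a piecewise polynomial on the triangles of $\widehat{\mathcal{T}}_h$ sharing $\ell$, vanishing on the remaining triangles. On the reference triangle, $\|\widehat{\psi}_\ell\widehat{p}\|_{0}$, $\|\widehat{\nabla}(\widehat{\psi}_\ell\widehat{p})\|_{0}$ and $\|\widehat{p}\|_{0,\widehat{\ell}}$ are all controlled by the same norm on a fixed finite-dimensional space, hence mutually comparable. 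Scaling back and inserting the Jacobian factors, namely an area factor $\sim h_\E^2$ for the $L^2(\E)$ integrals, a length factor $\sim h_\E$ for the edge integral, and an extra $\sim h_\E^{-2}$ from each derivative, gives $\|\psi_\ell p\|_{0,\E}\lesssim h_\E^{1/2}\|p\|_{0,\ell}$ and $\|\nabla(\psi_\ell p)\|_{0,\E}\lesssim h_\E^{-1/2}\|p\|_{0,\ell}$. Multiplying these by $h_\E^{-1/2}$ and $h_\E^{1/2}$ respectively and adding yields the claimed bound.

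The routine but delicate point is the piecewise structure over $\widehat{\mathcal{T}}_h$: one must carry out the scaling on each sub-triangle adjacent to $\ell$ separately and then sum, checking that the shape-regularity coming from $\mathbf{A_1}$ and $\mathbf{A_2}$ keeps the number of such triangles, and all the geometric constants entering the affine maps, uniformly bounded in terms of $C_{\T}$. I expect this bookkeeping, rather than any conceptual difficulty, to be the main obstacle, since it is precisely where the polygonal (as opposed to simplicial) nature of $\E$ enters and where uniformity of the hidden constants in $h_\E$ must be verified.
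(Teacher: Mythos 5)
The paper states this lemma without proof: the edge--bubble construction and both estimates are simply imported from the residual a posteriori literature (the bubbles are ``based in \cite{CGPS}'', i.e.\ Verf\"urth-type estimates transplanted to the sub-triangulation $\widehat{\mathcal{T}}_h$), so there is no argument in the paper to compare against, and your scaling proof is the standard, correct way to establish it. The only points worth making explicit are that the lower bound in the edge equivalence uses positive definiteness of $\hat p\mapsto\int_{\hat\ell}\hat\psi_\ell\,\hat p^{\,2}$ on $\mathbb{P}_k(\hat\ell)$ (a polynomial vanishing on the open edge vanishes identically), and that inside a given polygon $\E$ the support of $\psi_\ell$ is the single triangle of $\mathcal{T}_h^{\E}$ with base $\ell$, whose shape regularity is exactly what $\mathbf{A_1}$--$\mathbf{A_2}$ guarantee; hence the bookkeeping you anticipate reduces to one reference-triangle computation per polygon adjacent to $\ell$.
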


Now we are in position to establish  the main result of this section.
\begin{theorem}
\label{thm:efficiency}
For any $\E\in\mathcal{T}_h$, there holds
\begin{equation*}
\boldsymbol{\eta}_\E\lesssim \|\bw_h-\bw\|_{0,\omega_\ell}+\|\bw-\bpi_h^\E\bw_h\|_{0,\omega_\ell}, 
\end{equation*}
where $\omega_\ell$ denotes the union of two polygons sharing an
edge with $\E$, and the hidden constant is independent of $h$ and the discrete solution.
\end{theorem}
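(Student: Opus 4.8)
The plan is to establish the efficiency bound term by term, estimating each of the three contributions to $\boldsymbol{\eta}_\E$ in \eqref{eq:local_ind} separately: the rotational residual $\boldsymbol{R}_\E^2$, the stabilization term $\boldsymbol{\theta}_\E^2$, and the tangential-jump term $\sum_\ell h_\E\|\boldsymbol{J}_\ell\|_{0,\ell}^2$. First I would treat the interior residual $\boldsymbol{R}_\E = h_\E\|\rot\bpi_h^\E\bw_h\|_{0,\E}$ using the interior bubble function $\psi_\E$ from Lemma~\ref{lmm:bubble1}. The key observation is that $\rot\bw = 0$ for the exact solution, since Remark~\ref{rm:1} gives $\bw = \nabla u$. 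Setting $p := \rot\bpi_h^\E\bw_h \in \bbP_k(\E)$, I would use the bubble inequality to control $\|p\|_{0,\E}^2$ by $\int_\E \psi_\E p^2$, insert $\rot\bw = 0$, integrate by parts (so that $\rot$ is transferred onto the test function $\psi_\E p \in \H_0^1(\E)$, producing no boundary term), and thereby bound the integral by $\|\bw - \bpi_h^\E\bw_h\|_{0,\E}\|\nabla(\psi_\E p)\|_{0,\E}$. An inverse estimate of the form $\|\nabla(\psi_\E p)\|_{0,\E}\lesssim h_\E^{-1}\|p\|_{0,\E}$ (again from Lemma~\ref{lmm:bubble1}) then yields $h_\E^2\|p\|_{0,\E}^2 \lesssim \|\bw - \bpi_h^\E\bw_h\|_{0,\E}^2$, which is the desired bound for $\boldsymbol{R}_\E$.

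Next I would bound the stabilization term $\boldsymbol{\theta}_\E^2 = S^\E(\bw_h - \bpi_h^\E\bw_h, \bw_h - \bpi_h^\E\bw_h)$. Here the plan is to exploit the equivalence \eqref{eq:20} between $S^\E$ and the $\L^2$-inner product on $\bVV_h^\E$, giving $\boldsymbol{\theta}_\E^2 \lesssim \|\bw_h - \bpi_h^\E\bw_h\|_{0,\E}^2$. Then a triangle inequality $\|\bw_h - \bpi_h^\E\bw_h\|_{0,\E} \le \|\bw_h - \bw\|_{0,\E} + \|\bw - \bpi_h^\E\bw_h\|_{0,\E}$ splits this into the two terms appearing on the right-hand side of the theorem, both localized to $\E \subset \omega_\ell$.

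For the tangential-jump term I would use the edge bubble function $\psi_\ell$ from Lemma~\ref{lmm:bubble2}. Writing $\boldsymbol{J}_\ell = \jumpp{\bpi_h^\E\bw_h\cdot\boldsymbol{t}}$ and using that the tangential component of $\bw = \nabla u$ is continuous across $\ell$ (so the jump of $\bw\cdot\boldsymbol{t}$ vanishes), I would control $\|\boldsymbol{J}_\ell\|_{0,\ell}^2$ by $\int_\ell \psi_\ell \boldsymbol{J}_\ell^2$, extend $\boldsymbol{J}_\ell$ into the two polygons forming $\omega_\ell$, and integrate by parts on each so as to convert the edge integral into volume integrals over $\omega_\ell$ involving $\rot(\bpi_h^\E\bw_h - \bw)$ and $(\bpi_h^\E\bw_h - \bw)$ tested against $\psi_\ell\boldsymbol{J}_\ell$. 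The scaled trace/inverse estimates in Lemma~\ref{lmm:bubble2} then give $h_\E\|\boldsymbol{J}_\ell\|_{0,\ell}^2 \lesssim \|\bw - \bpi_h^\E\bw_h\|_{0,\omega_\ell}^2 + h_\E^2\|\rot\bpi_h^\E\bw_h\|_{0,\omega_\ell}^2$, and the last term is reabsorbed using the already-established bound on $\boldsymbol{R}_\E$.

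The main obstacle I anticipate is the tangential-jump estimate: unlike the standard finite-element setting, here the quantity being jumped is the \emph{projection} $\bpi_h^\E\bw_h$ rather than the discrete solution itself, so the integration-by-parts manipulation must carefully track that the volume residual produced is exactly $\rot\bpi_h^\E\bw_h$ (matching $\boldsymbol{R}_\E$) rather than $\rot\bw_h$, which is not computable and need not vanish. Getting the extension of $\boldsymbol{J}_\ell$ and the bubble-function scaling to cooperate across the two elements of $\omega_\ell$, while keeping every term either of order $\|\bw - \bpi_h^\E\bw_h\|_{0,\omega_\ell}$, $\|\bw - \bw_h\|_{0,\omega_\ell}$, or reabsorbable into $\boldsymbol{R}_\E$, is the delicate bookkeeping step; once the three partial bounds are in hand, summing them gives $\boldsymbol{\eta}_\E \lesssim \|\bw_h - \bw\|_{0,\omega_\ell} + \|\bw - \bpi_h^\E\bw_h\|_{0,\omega_\ell}$ and completes the proof.
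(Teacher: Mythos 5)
Your proposal is correct and follows essentially the same three-step bubble-function argument as the paper: interior bubble for $\boldsymbol{R}_\E$, edge bubble with integration by parts over $\omega_\ell$ for $\boldsymbol{J}_\ell$ (reabsorbing the $\rot\bpi_h^\E\bw_h$ volume term via the Step-1 bound), and the stability property \eqref{eq:20} plus a triangle inequality for $\boldsymbol{\theta}_\E$. The only inessential difference is that in Step 1 the paper inserts $\rot\bw_h=0$ (which holds by definition of $\bVV_h^\E$) and then adds and subtracts $\bw$, whereas you insert $\rot\bw=0$ directly; both yield the stated bound.
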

\begin{proof}
The aim is to estimate each term of the local indicator \eqref{eq:local_ind}. The proof is divided in three steps:
\begin{itemize}
\item \textbf{Step 1:} We begin by estimating $\boldsymbol{R}_\E^2$ in \eqref{eq:residual_terms}. Invoking  the properties of the bubble function $\psi_\E$, Cauchy-Schwarz inequality,  and Lemma \ref{lmm:bubble1}, we have
\begin{multline*}
\displaystyle \boldsymbol{R}_\E^2\lesssim\int_{\E}\psi_\E\rot(\bpi_h^{\E}\bw_{h})\rot(\bpi_h^{\E}\bw_h)=\int_{\E}\psi_\E\rot(\bpi_h^{\E}\bw_{h})\rot(\bpi_h^{\E}\bw_h-\bw_{h})\\
=-\int_\E(\bpi_h^\E\bw_h-\bw_h)\curl(\psi_\E\rot(\bpi_h^\E\bw_h))
\lesssim\|\bpi_h^\E\bw_h-\bw_h\|_{0,\E}h_\E^{-1} \left\|\rot(\bpi_h\bw_h)\right\|_{0,\E},
\end{multline*}
which implies that
\begin{equation}
\label{eq:contribution1}
\boldsymbol{R}_{\E}=h_\E\|\rot(\bpi_h^{\E}\bw_h)\|_{0,\E}\lesssim\|\bw-\bw_h\|_{0,\E}+\|\bw-\bpi_h^{\E}\bw_h\|_{0,\E}.
\end{equation}
\item\textbf{Step 2:}  Now we estimate  $\boldsymbol{J}_h$. Following the proof  of \cite[Lemma 5.16]{CM2019}, we obtain
\begin{equation*}
\|\boldsymbol{J}_\ell\|_{0,\ell}^2\lesssim \|\psi_\ell \boldsymbol{J}_\ell\|_{0,\ell}^2=\int_{\ell}(\psi_\ell \boldsymbol{J}_\ell)\cdot \boldsymbol{J}_\ell=\int_{\omega_\ell}(\bw-\bpi_h^{\E}\bw_{h})\cdot\curl(\psi_\ell \boldsymbol{J}_\ell)
+\int_{\omega_\ell} \psi_\ell \boldsymbol{J}_\ell\rot\bpi_h^{\E}\bw_h.
\end{equation*}

Hence, from Cauchy-Schwarz inequality, the bubble function properties  and  \eqref{eq:contribution1}, we have %and the fact that $h_e\leq h_\E$, we have
\begin{align*}
\|\boldsymbol{J}_\ell\|_{0,\ell}^2&\lesssim|\psi_\ell \boldsymbol{J}_\ell|_{1,\omega_\ell}\|\bw-\bpi_h^\E\bw_h\|_{0,\omega_\ell}+\|\psi_\ell \boldsymbol{J}_\ell\|_{0,\omega_\ell}\|\rot\bpi_h^\E\bw_h\|_{0,\omega_\ell},\\
%&\lesssim h_K^{-1/2}\|\bw-\bpi_h^\E\bw_h\|_{0,\omega_\ell}\|\boldsymbol{J}_h\|_{0,\ell}\\
&\lesssim h_K^{-1/2}\left(\|\bw_h-\bw\|_{0,\omega_\ell}+\|\bw-\bpi_h^\E\bw_h\|_{0,\omega_\ell}\right)\|\boldsymbol{J}_\ell\|_{0,\ell}.
\end{align*}

Hence, we conclude that
\begin{equation}
\label{eq:jump}
h_\E^{1/2}\|\boldsymbol{J}_\ell\|_{0,\ell}\lesssim \|\bw_h-\bw\|_{0,\omega_\ell}+\|\bw-\bpi_h^\E\bw_h\|_{0,\omega_\ell}.
\end{equation}

%\begin{align*}
%\left\|\jump{\bpi_h^\E\bw_h\cdot\boldsymbol{t}}\right\|_{0,\ell}^2&\leq\int_{\ell}\psi_\ell \jump{\bpi_h^\E\bw_h\cdot\boldsymbol{t}} \jump{(\bpi_h^\E\bw_h-\bw_h)\cdot\boldsymbol{t}}\\
%&\leq \left\|\jump{(\bpi_h^\E\bw_h-\bw_h)\cdot\boldsymbol{t}} \right\|_{0,\ell}\left\|\psi_\ell \jump{\bpi_h^\E\bw_h\cdot\boldsymbol{t}}\right\|_{0,\ell}\\
%&\leq  \left\|\jump{(\bpi_h^\E\bw_h-\bw_h)\cdot\boldsymbol{t}} \right\|_{0,\ell} h_{\ell}^{-1/2}\left\|\jump{\bpi_h^\E\bw_h\cdot\boldsymbol{t}}\right\|_{0,\ell}.
%\end{align*}
%Hence, we obtain
%\begin{equation}
%\label{eq:contribution2}
%h_\ell^{1/2}\left\|\jump{\bpi_h^\E\bw_h\cdot\boldsymbol{t}}\right\|_{0,\ell}\leq C \left\|\jump{(\bpi_h^\E\bw_h-\bw_h)\cdot\boldsymbol{t}} \right\|_{0,\ell}.
%\end{equation}
\item\textbf{Step 3:} The final step is to control the term $\boldsymbol{\theta}_\E$. To do this task,
we use the stability property \eqref{eq:20},  add and subtract $\bw$
with the purpose of applying triangular inequality as follows
\begin{equation}
\label{eq:contribution3}
\boldsymbol{\theta}_{E}\leq c_{1}\|\bw_h-\bpi_h^\E \bw_h\|_{0,K}\lesssim
\|\bw_h-\bw\|_{0,K}+\|\bpi_h^\E \bw_h-\bw\|_{0,K}.
\end{equation}
\end{itemize}

Hence, the proof is complete by gathering \eqref{eq:contribution1}, \eqref{eq:jump} and \eqref{eq:contribution3}.
\end{proof}

As a direct consequence of lemma above, we have the following result that allows us to
conclude the efficiency of the local and global error estimators
for the acoustic problem, and hence, for its equivalent mixed  problem.
\begin{corollary}[Efficiency]
 There holds
%\begin{equation*}
%\boldsymbol{\eta}_\E\lesssim \|\bw-\bw_{h}\|_{\H(\div;\E)}+\|u-u_h\|_{0,\E}+\|\bw-\bpi_h^{\E}\bw_{h}\|_{0,\E},
% \end{equation*}
%where $\tilde{\omega}_\E$ denotes the union of two polygons sharing and edge with  $\E$.  Hence
\begin{equation*}
\boldsymbol{\eta}\lesssim  \|\bw-\bw_{h}\|_{0,\O}+\|\bw-\bpi_h\bw_{h}\|_{0,\O},
 \end{equation*}
 where  the hidden constants are independent of $h$.
\end{corollary}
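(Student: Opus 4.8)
The plan is to obtain the global efficiency bound directly from the local estimate of Theorem~\ref{thm:efficiency} by a standard assembling argument. Starting from the definition \eqref{eq:global_est}, I would write $\boldsymbol{\eta}^2=\sum_{\E\in\mathcal{T}_h}\boldsymbol{\eta}_\E^2$, square the local bound, and use $(a+b)^2\leq 2(a^2+b^2)$ to get
\begin{equation*}
\boldsymbol{\eta}_\E^2\lesssim\|\bw_h-\bw\|_{0,\omega_\ell}^2+\|\bw-\bpi_h^\E\bw_h\|_{0,\omega_\ell}^2,
\end{equation*}
so that summing over the elements yields
\begin{equation*}
\boldsymbol{\eta}^2\lesssim\sum_{\E\in\mathcal{T}_h}\left(\|\bw_h-\bw\|_{0,\omega_\ell}^2+\|\bw-\bpi_h^\E\bw_h\|_{0,\omega_\ell}^2\right).
\end{equation*}

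The key step is a finite-overlap argument. Since $\omega_\ell$ is the union of the polygons sharing an edge with $\E$, the mesh regularity assumptions $\mathbf{A_1}$ and $\mathbf{A_2}$ guarantee that every polygon $\E'\in\mathcal{T}_h$ lies in at most a fixed number $N$ of such patches, with $N$ depending only on the shape-regularity constant $C_{\T}$. Consequently each of the two sums on the right-hand side is controlled by the corresponding global $\L^2(\O)$-norm up to the factor $N$; in particular $\sum_{\E}\|\bw_h-\bw\|_{0,\omega_\ell}^2\leq N\|\bw-\bw_h\|_{0,\O}^2$, and because $(\bpi_h\bw_h)|_\E=\bpi_h^\E\bw_h$ by the definition of the global projector, the local projector contributions reassemble so that $\sum_{\E}\|\bw-\bpi_h^\E\bw_h\|_{0,\omega_\ell}^2\leq N\|\bw-\bpi_h\bw_h\|_{0,\O}^2$. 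Taking the square root then gives the stated estimate.

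The argument is essentially routine; the only point requiring care is the control of the overlap constant $N$, namely verifying that shape regularity forbids a single element from belonging to arbitrarily many edge patches, so that $N$ is bounded independently of $h$. Once this bounded-overlap property is established, the passage from the local to the global norm is immediate and the hidden constants remain independent of $h$ and of the discrete solution.
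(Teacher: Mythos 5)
Your argument is correct and is exactly the standard summation-plus-finite-overlap reasoning that the paper leaves implicit when it states the corollary as a ``direct consequence'' of Theorem~\ref{thm:efficiency}; the bounded-overlap property you flag does indeed follow from the mesh assumptions $\mathbf{A_1}$--$\mathbf{A_2}$, and the identification $(\bpi_h\bw_h)|_\E=\bpi_h^\E\bw_h$ is by definition of the global projector. No gap; this matches the paper's intended proof.
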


\section{Numerical results}
\label{sec:numerics}
In this section, we  report numerical tests in order to assess the behavior of the a posteriori estimator 
defined in \eqref{eq:global_est}. With this aim, we have implemented in a MATLAB code a lowest order VEM scheme
on arbitrary polygonal meshes. 

%In order to asses the performance of our a posteriori error estimator,  we have used two refinement algorithm for the meshes.
%
% The first algorithm is based on the so-called \emph{blue-green-closure} strategy (see \cite{Schwab}), for which all the subsequent meshes consist of triangles, which will be named as \textbf{Adaptive VEMBG} . On the other hand,  the second refinement algorithm is described in \cite{BM2015}, which consists in splitting each element of the mesh into $n$ quadrilaterals ($n$ being the number of edges of the polygon) by connecting the barycenter of the element with the midpoint of each edge, which will be named as \textbf{Adaptive VEM} . Notice that although this process is initiated with a mesh of triangles, the successively created meshes will contain other kind of convex polygons, as it can be seen in Figs. \ref{FIG:VEMT} and \ref{FIG:VEM}. Both schemes are based on the strategy of refining those elements $\E\in\CT_h$ that satisfy
 
We have used  the mesh refinement algorithm described in \cite{BM2015},  which consists in splitting each element of the mesh into $n$ quadrilaterals ($n$ being the number of edges of the polygon) by connecting the barycenter of the element with the midpoint of each edge, which will be named as  \textbf{Adaptive VEM}. Notice that although this process is initiated with a mesh of triangles, the successively created meshes will contain other kind of convex polygons, as it can be seen in Figure \ref{FIG:VEM}. Both schemes are based on the strategy of refining those elements $\E\in\CT_h$ that satisfy
 $$\boldsymbol{\eta}_{\E}\geq 0.5 \max_{{\E'\in\CT_{h}}}\{\boldsymbol{\eta}_{\E'}\}.$$

\subsection{Test 1: L-shaped domain.}
We will consider the non-convex domain 
%$\O:=(0,1)\times(0,1)\setminus\left(\frac{1}{2},1\right)\times \left(\frac{1}{2},1\right)$.
$\O:=\left(0,1\right )\times \left(0,1\right)\setminus [1/2,1 ] \times [1/2,1 ]$.

It is clear that the first eigenfunction of the acoustic problem
in this domain is not smooth enough, due the presence of a geometrical
singularity at $\left(\frac{1}{2}, \frac{1}{2}\right)$. This leads to a lack
of regularity due to the 
%and hence, the optimal order of convergence proved in \cite{BeiraoVEMAcoustic2017} is not obtained.
%In this case $\O$ has a
reentrant angle $\omega=\pi/3$. Therefore, according to \cite{BeiraoVEMAcoustic2017},
using quasi-uniform meshes, the convergence rate for the eigenvalues should be $|\l-\l_{h}|=\mathcal{O}(h^{4/3})\approx\mathcal{O}(N^{-1/3})$, where $N$ denotes the number of degrees of freedom. Then, the proposed a posteriori estimator \eqref{eq:global_est} must be capable to recover the optimal order $|\l-\l_{h}| = \mathcal{O} (N^{-1})$, when the adaptive refinement is performed near to the singularity point. 

For the numerical tests, we have computed the smallest eigenvalue and its  corresponding eigenfunction using the MATLAB command \texttt{eigs}.

Figures \ref{FIG:VEM} to \ref{FIG:VEM2} show the adaptively refined meshes obtained with VEM procedures and different initial meshes. Figure \ref{FIG:VEM} is initiated with a mesh of triangles, while Figure \ref{FIG:VEM2} is initiated with a  non-structured hexagonal meshes made of convex hexagons.
%\begin{figure}[H]
%\begin{center}
%\begin{minipage}{4.2cm}
%\centering\includegraphics[height=4.1cm, width=4.1cm]{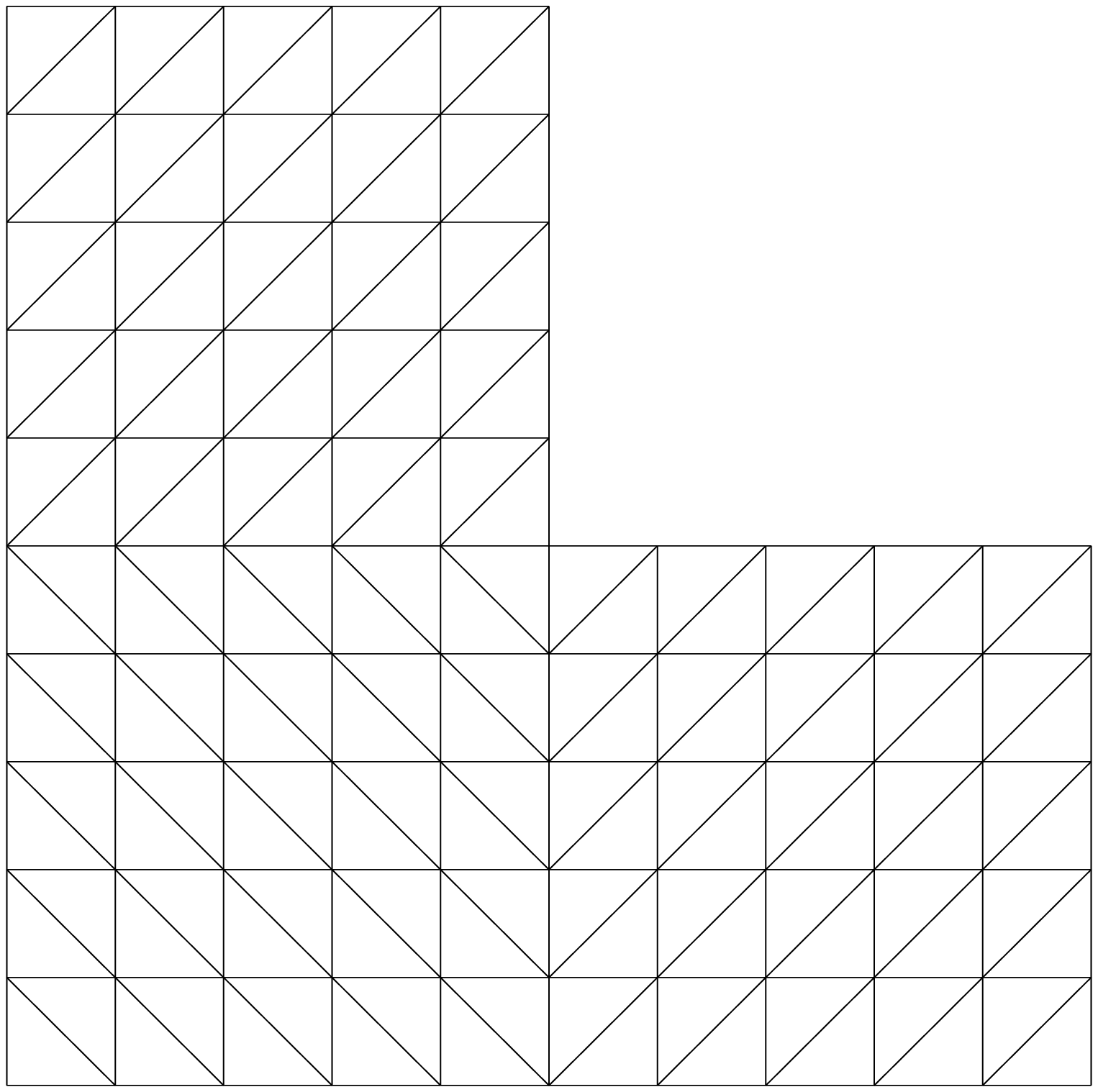}
%\end{minipage}
%\begin{minipage}{4.0cm}
%\centering\includegraphics[height=4.1cm, width=4.1cm]{}
%\end{minipage}
%\begin{minipage}{4.0cm}
%\centering\includegraphics[height=4.1cm, width=4.1cm]{}
%\end{minipage}
%\caption{Adaptively refined meshes obtained with VEM scheme at refinement steps 0, 1 and 8 (Adaptive VEMBGC).}
%\label{FIG:VEMT}
%\end{center}
%\end{figure}
\begin{figure}[H]
\begin{center}
\begin{minipage}{4.2cm}
\centering\includegraphics[height=4.1cm, width=4.1cm]{mallas/step0VEM.eps}
\end{minipage}
\begin{minipage}{4.0cm}
\centering\includegraphics[height=4.1cm, width=4.1cm]{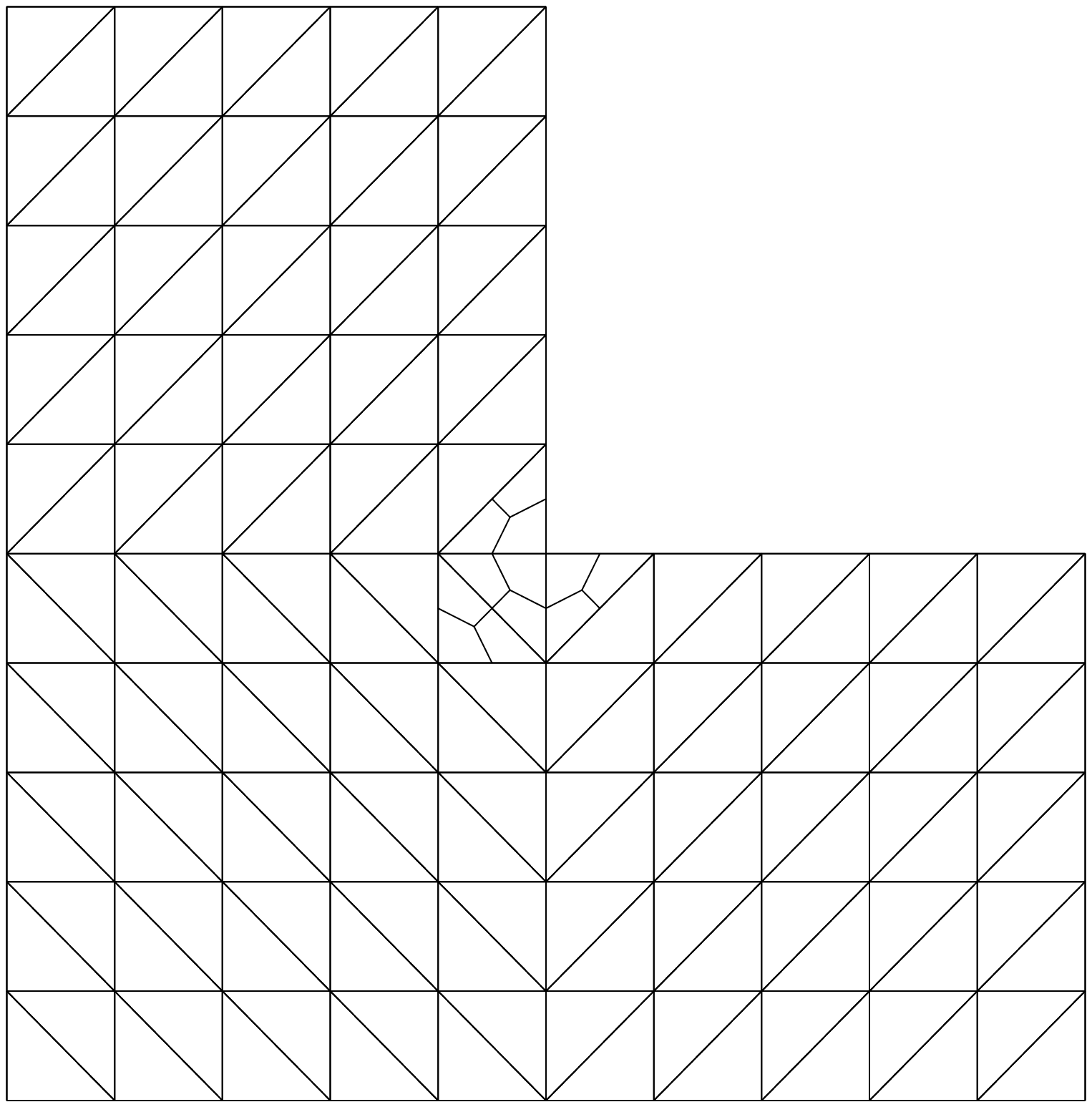}
\end{minipage}
\begin{minipage}{4.0cm}
\centering\includegraphics[height=4.1cm, width=4.1cm]{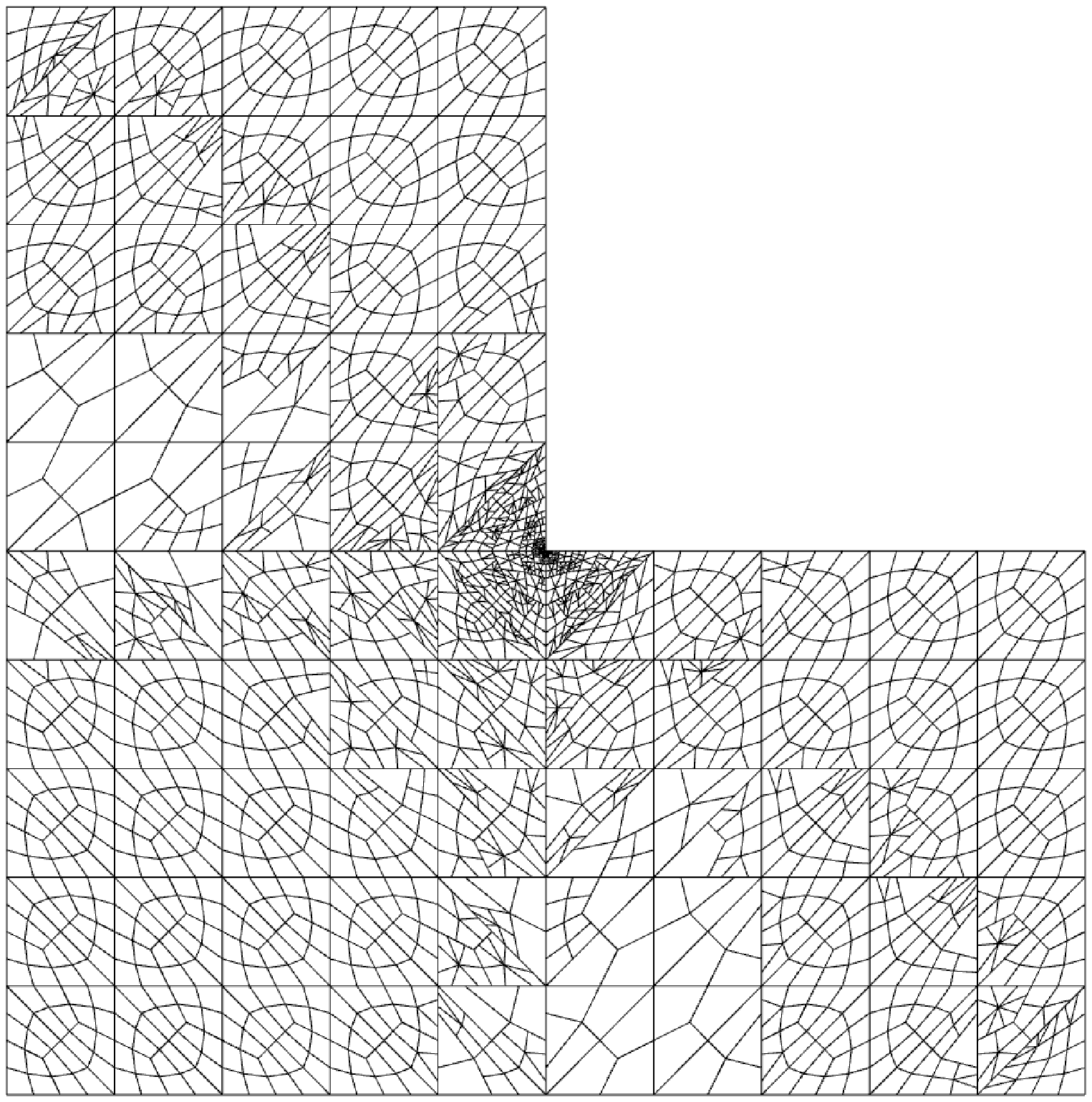}
\end{minipage}
\caption{Test 1. Adaptively refined meshes obtained with VEM scheme at refinement steps 0, 1 and 8 (Adaptive VEM).}
\label{FIG:VEM}
\end{center}
\end{figure}
\begin{figure}[H]
\begin{center}
\begin{minipage}{4.2cm}
\centering\includegraphics[height=4.1cm, width=4.1cm]{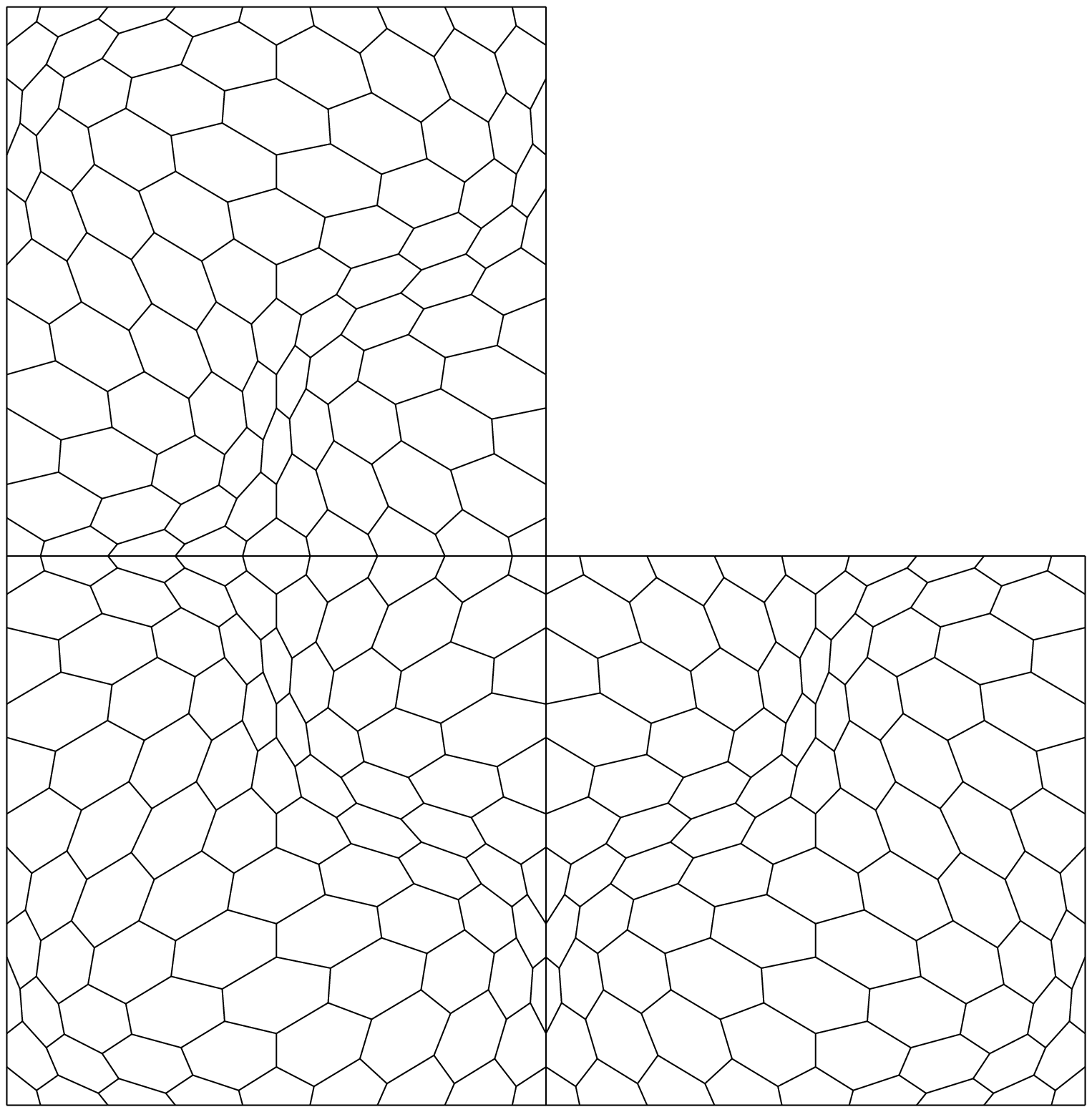}
\end{minipage}
\begin{minipage}{4.2cm}
\centering\includegraphics[height=4.1cm, width=4.1cm]{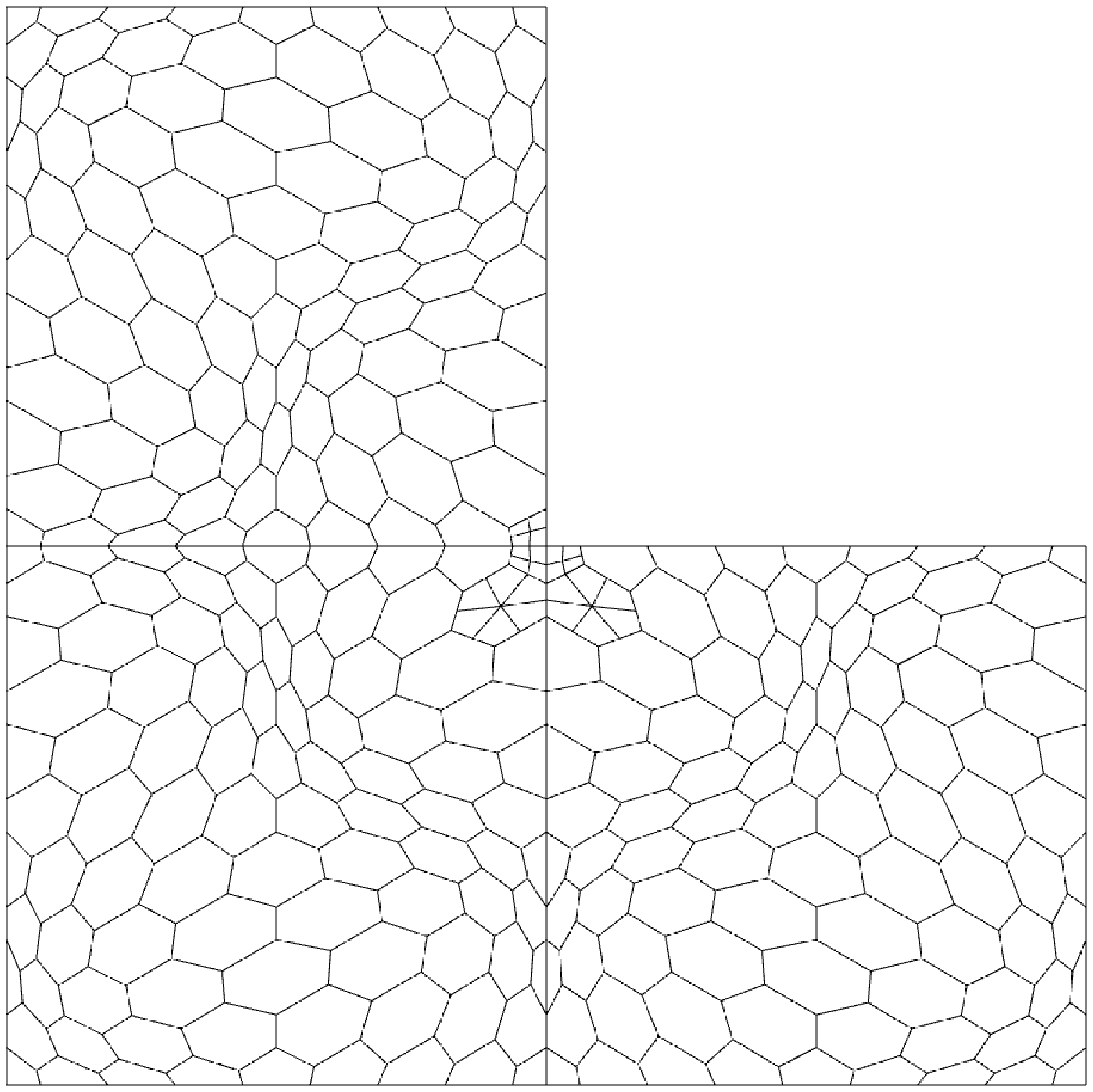}
\end{minipage}
\begin{minipage}{4.2cm}
\centering\includegraphics[height=4.1cm, width=4.1cm]{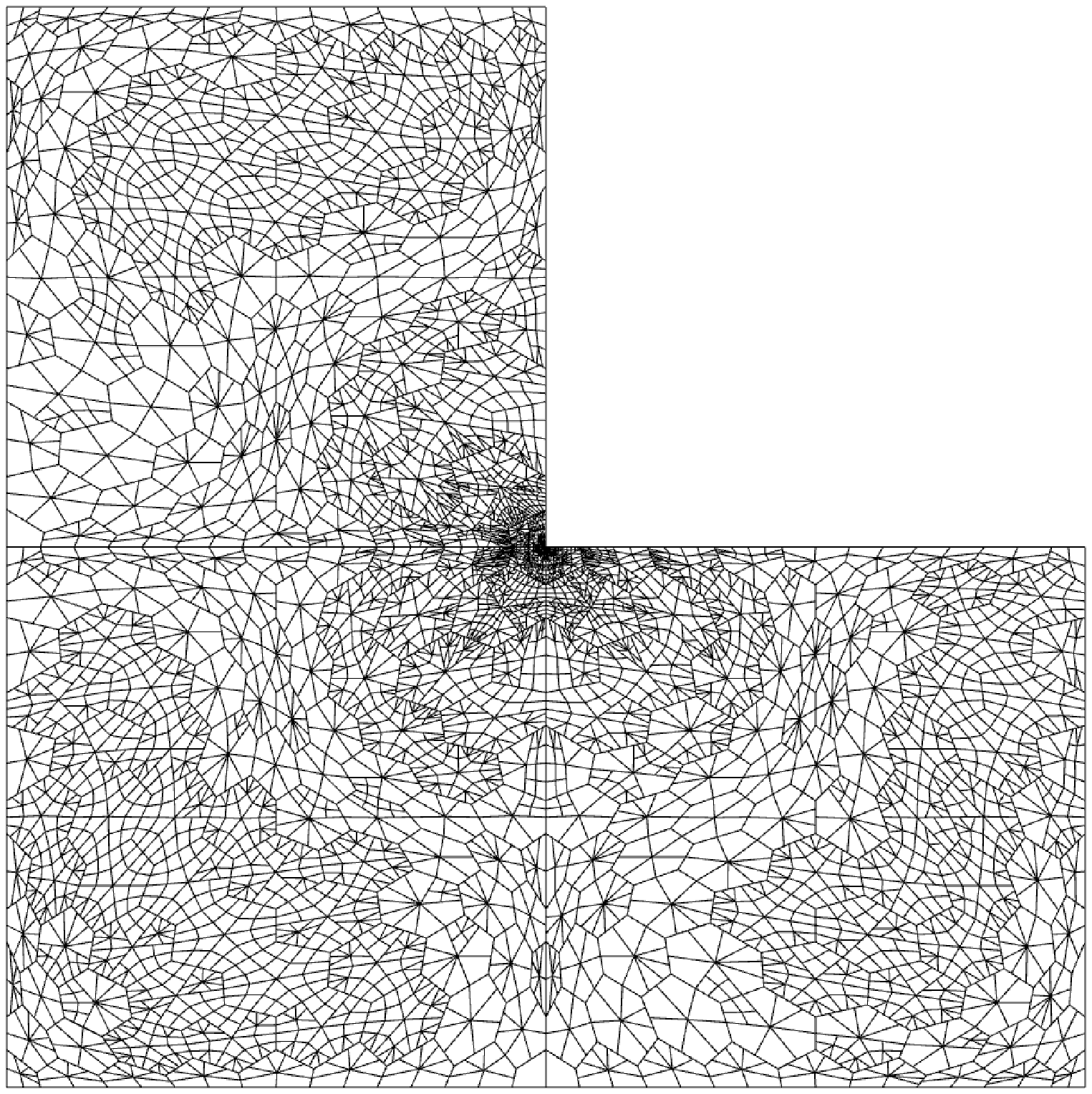}
\end{minipage}
\caption{Test 1. Adaptively refined meshes obtained with VEM scheme at refinement steps 0, 1 and 8 (Adaptive VEM).}
\label{FIG:VEM2}
\end{center}
\end{figure}

%\vspace{-4.9cm}
Figures above show that our estimator identifies the singularity point of the domain, leading to a refinement 
on the region of the re-entrant angle. This refinement allows to achieve the optimal order of convergence for the eigenvalue.

In order to compute the errors $|\l_{1}-\l_{h1}|$, and since an exact eigenvalue is not known, we have used an approximation based on a least-squares fitting of the computed values obtained with extremely refined meshes. Thus, we have obtained the value $\l_{1}=5.9017$, which has at least four accurate significant digits.

We report in Table \ref{TABLA:1} the lowest eigenvalue $\l_{h1}$ on uniformly refined meshes, adaptively refined meshes with VEM schemes and in the last column we report adaptively refined meshes with VEM schemes and initial non-structured hexagonal meshes. Each table includes the estimated convergence rate.

%\vspace{-2.0cm}
\begin{table}[H]
\begin{center}
\caption{Test 1. Computed lowest eigenvalue   $\l_{h1}$ computed with different schemes.}
\begin{tabular}{|c|c||c|c||c|c||c|c|c|c}
  \hline
    \multicolumn{2}{|c||}{Uniform VEM} & \multicolumn{2}{c||}{Adaptive VEM} &  \multicolumn{2}{c|}{Adaptive VEMH}  \\
    \hline
     $N$ & $\l_{h1}$  &   $N$ & $\l_{h1}$  &       $N$ & $\l_{h1}$\\
\hline
% 245 &  5.6831  & 245&   5.6831&   245   &5.6831  & 829  & 5.8283\\
%   940 &  5.8231 &  276&   5.7447  & 266  & 5.7356 &  872  & 5.8495\\
%   3680  & 5.8732&   295 &  5.7674 &  288 &  5.7554  & 945  & 5.8605\\
%   14560  & 5.8914 &  330 &  5.7820 &  381 &  5.7805 &  1131  & 5.8688\\
%   57920  & 5.8982 &  755&  5.8437 &  889 &  5.8440 &  2010  & 5.8833\\
%   231040  & 5.9008 &  982 &  5.8626 &  1206 &  5.8620 &  3296  & 5.8895\\
%                     &    & 1298 &  5.8717 &  1731 &  5.8713  & 4932&   5.8928\\
%                     &    &  2871&   5.8874 &  3639 &  5.8876 &  7287&   5.8955\\
%                     &     & 3965&   5.8928 &  5206 &  5.8924&   12003&   5.8982\\
%                     &    & 5107&   5.8948&   7653&   5.8949&   19349&  5.8998\\
%                      &   & 11661&   5.8989&   14545&   5.8985&   30751 &  5.9007\\
%                     &     & 15304 &  5.9000 &  22982 &  5.9000&   50421 &  5.9014\\
%                     &     & 20716&   5.9006 &  33844&   5.9006  &                 &     \\
%                      &    & 44996 &  5.9016 &  61641&   5.9015  &                 &     \\
 245 &  5.6831  &   245   &5.6831  & 829  & 5.8283\\
   940 &  5.8231 & 266  & 5.7356 &  872  & 5.8495\\
   3680  & 5.8732&  288 &  5.7554  & 945  & 5.8605\\
   14560  & 5.8914 &   381 &  5.7805 &  1131  & 5.8688\\
   57920  & 5.8982 &   889 &  5.8440 &  2010  & 5.8833\\
   231040  & 5.9008 &  1206 &  5.8620 &  3296  & 5.8895\\
                     &    &   1731 &  5.8713  & 4932&   5.8928\\
                     &    &   3639 &  5.8876 &  7287&   5.8955\\
                     &     &  5206 &  5.8924&   12003&   5.8982\\
                     &    &    7653&   5.8949&   19349&  5.8998\\
                      &   &    14545&   5.8985&   30751 &  5.9007\\
                     &     &  22982 &  5.9000&   50421 &  5.9014\\
                     &     &  33844&   5.9006  &                 &     \\
                      &    &  61641&   5.9015  &                 &     \\
     \hline 
     Order   &$\mathcal{O}\left(N^{-0.79}\right)$&       Order   & $\mathcal{O}\left(N^{-1.08}\right)$ &   Order   & $\mathcal{O}\left(N^{-1.14}\right)$\\
       \hline
       $\l_1$  &5.9017 &  $\l_{1}$  &5.9017&   $\l_1$  &5.9017\\
     \hline
    \end{tabular}
\label{TABLA:1}
\end{center}
\end{table}    
 %COMENTARIOS
\begin{figure}[H]
\begin{center}
\begin{minipage}{5.0cm}
\centering\includegraphics[height=8.0cm, width=8.0cm]{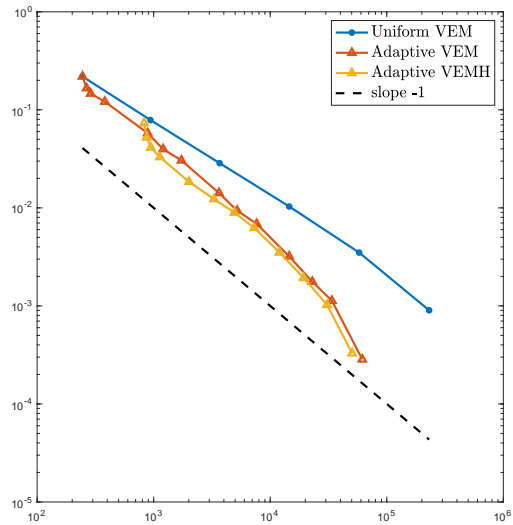}
\end{minipage}
\caption{Test 1. Error curves of $|\l_{1}-\l_{h1}|$ for uniformly refined meshes
(``Uniform VEM''), adaptively refined meshes with VEM (``Adaptive VEM'') and adaptively refined meshes with VEM and  initial mesh of hexagons (``Adaptive VEMH'').}
\label{ero234}
\end{center}
\end{figure}

%\vspace{-1.9 cm}
In Figure \ref{ero234} we present error curves where we observe that the \GR{two} refinement schemes lead to a correct convergence rate.
%Since we do not have an exact eigenvalue for the L-shaped domain, the errors $|\l_{1}-\l_{h1}|$, we have used an approximation based on a least squares fitting of the computed values obtained with extremely refined meshes. 
It can be seen from Table \ref{TABLA:1} and Figure \ref{ero234}, that the uniform refinement leads to a convergence rate close to that predicted by the theory, while the adaptive VEM  schemes allow us to recover the optimal order of convergence $\mathcal{O}\left(N^{-1}\right)$.

We report in Table \ref{TABLA:2}, the error $|\l_{1}-\l_{h1}|$ and the estimators $\boldsymbol{\eta}$ at each step of the adaptive
VEM scheme. We include in the table the  terms  $\displaystyle \boldsymbol{\theta}^{2}:=\sum_{\E\in\CT_{h}}\boldsymbol{\theta}_{\E}^{2},$
%\begin{equation*}
%\displaystyle \boldsymbol{\theta}^{2}:=\sum_{\E\in\CT_{h}}\boldsymbol{\theta}_{\E}^{2},
%\end{equation*}
which appears from the inconsistency of the VEM, and 
$\boldsymbol{J}_h:=\sum_{\E\in\CT_{h}}\left(\sum_{\ell\in\GR{\mathcal{S}_\E}} h_{\E}\|\boldsymbol{J}_{\ell}\|_{0,\ell}^{2}\right),$
which arise from the edge residuals.  We also report in the table the effectivity indexes $|\l_{1}-\l_{h1}|/\boldsymbol{\eta}^{2}$.

\begin{table}[H]
\begin{center}
\caption{Components of the error estimator and effectivity indexes on the adaptively refined meshes with VEM.}
\vspace{0.3cm}
\begin{tabular}{|c|c|c|c|c|c|c|c|c|}
\hline
$N$   & $\l_{h1}$ &  $|\l_{1}-\l_{h1}|$   & $\boldsymbol{\theta}^{2}$ & $\boldsymbol{J}_h^{2}$ &  $\boldsymbol{\eta}^2$ & $\dfrac{|\l_{1}-\l_{h1}|}{\boldsymbol{\eta}^2}$ \\
\hline
%      245 &  35.6472  &2.9049& 7.5844e-03 &   1.7990e-02  &  1.5992 &    18.1645\\
%   333   & 36.4559    & 2.0961 &5.7987e-03  &   8.7265e-03  &  1.2052  &  17.3924\\
%   720   & 37.7779    & 7.7415e-01 &1.3277e-03    & 2.1369e-03    & 5.8861e-02 &   13.1523\\
%     1083 &   37.9339  & 6.1817e-01& 1.0574e-03  &   1.1395e-03  &   4.6871e-02 &   13.1887\\
%     2168  &  38.2740  &2.7813e-01  &2.1604e-04  & 2.4436e-04&  2.1457e-02   & 12.9620\\
%     36640  &  38.3804  &1.7165e-01  &9.4845e-05  & 1.0565e-04 & 1.4160e-02  &  12.1226\\
%     4938  & 38.4195   & 1.3254e-01 &7.0583e-05    &7.0525e-05  & 1.1879e-02  &  11.1577\\
%     8954   & 38.4858  &6.6293e-02   &1.8367e-05  &  2.1246e-05  & 6.2938e-03   & 10.5330\\
%    14828  & 38.5150  &  3.7094e-02  &6.4939e-06   &  8.3688e-06   & 3.8552e-03  &   9.6219\\
%    23735  &  38.5274   &2.4658e-02   &3.4056e-06  &   4.0061e-06   &2.7224e-03   &  9.0573\\
%    39543  &  38.5407   &  1.1428e-02 &1.0859e-06   &  1.3668e-06   & 1.5661e-03   &  7.2968\\
%    62260  &  38.5482  & 3.8805e-03 &4.2093e-07  & 5.7640e-07 & 9.9866e-04    & 3.8857\\
%   100101  &  38.5512  &8.5525e-04  &1.9600e-07  & 2.5004e-07  & 6.6787e-04  &   1.2806\\ \hline
 245   &5.6831  & 2.1869e-01  & 9.4718e-03   &1.3226e-01  & 1.4174e-01 &  1.5429\\
   266&   5.7356 & 1.6614e-01  & 1.2881e-02  & 8.5612e-02 &  9.8493e-02 &  1.6868\\
   288  & 5.7554 &  1.4637e-01  & 1.2791e-02 &  7.5101e-02 &  8.7892e-02  & 1.6653\\
   381  & 5.7805 &  1.2119e-01  & 1.2944e-02 &  5.6072e-02 &  6.9016e-02  & 1.7560\\
   889  & 5.8440 &  5.7776e-02 &  9.4599e-03 &  1.5576e-02  & 2.5036e-02  & 2.3077\\
   1206  & 5.8620&  3.9753e-02  & 6.8149e-03 &  1.0962e-02  & 1.7777e-02  & 2.2362\\
   1731  & 5.8713 &  3.0451e-02   &5.1742e-03 &  8.1822e-03  & 1.3356e-02  & 2.2799\\
   3639  & 5.8876 &  1.4166e-02  & 2.7744e-03 &  3.2286e-03  & 6.0030e-03  & 2.3598\\
   5206 &  5.8924 &  9.3787e-03  & 1.8562e-03 &  2.3210e-03  & 4.1771e-03  & 2.2453\\
   7653 &  5.8949&  6.8767e-03  & 1.3746e-03 &  1.6477e-03  & 3.0223e-03  & 2.2753\\
   14545 &  5.8985  & 3.1983e-03  & 7.4205e-04 &  9.1237e-04  & 1.6544e-03  & 1.9332\\
   22982 &  5.9000 &  1.7633e-03  & 4.6344e-04 &  5.9698e-04  & 1.0604e-03  & 1.6628\\
   33844 &  5.9006  & 1.1283e-03  & 3.3863e-04  & 4.2666e-04  & 7.6529e-04  & 1.4743\\\hline
 %  61641 &  5.9015  & 2.8477e-04  & 1.8775e-04  & 2.4983e-04  & 4.3758e-04  & 0.6508\\\hline
\end{tabular}
\label{TABLA:2}
\end{center}
\end{table}

From Table \ref{TABLA:2} we observe  that the effectivity indexes are bounded and far from zero. Also, the inconsistency and edge residual terms are, roughly speaking, of the same order. This results are similar to those obtained
in \cite{MRR_apost}. We end this test presenting in Figure~\ref{FIG:eigenfunction}  the displacement field and the pressure fluctuation of the fluid on the L-shaped domain, associated to the first eigenfunction.
\begin{figure}[H]
\begin{center}
\begin{minipage}{4.2cm}
\centering\includegraphics[height=4.1cm, width=4.1cm]{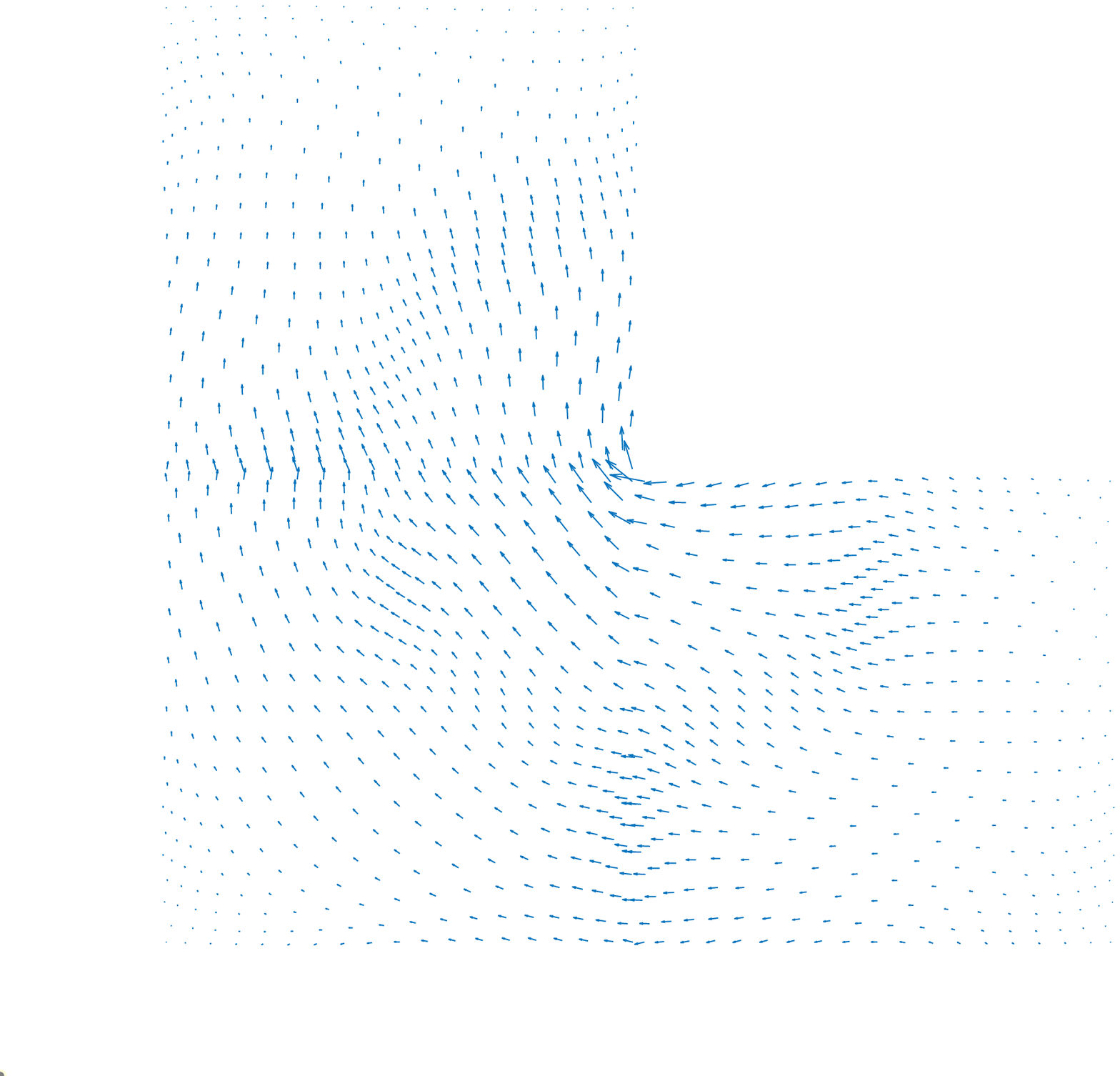}
\end{minipage}
\begin{minipage}{4.2cm}
\centering\includegraphics[height=4.1cm, width=4.1cm]{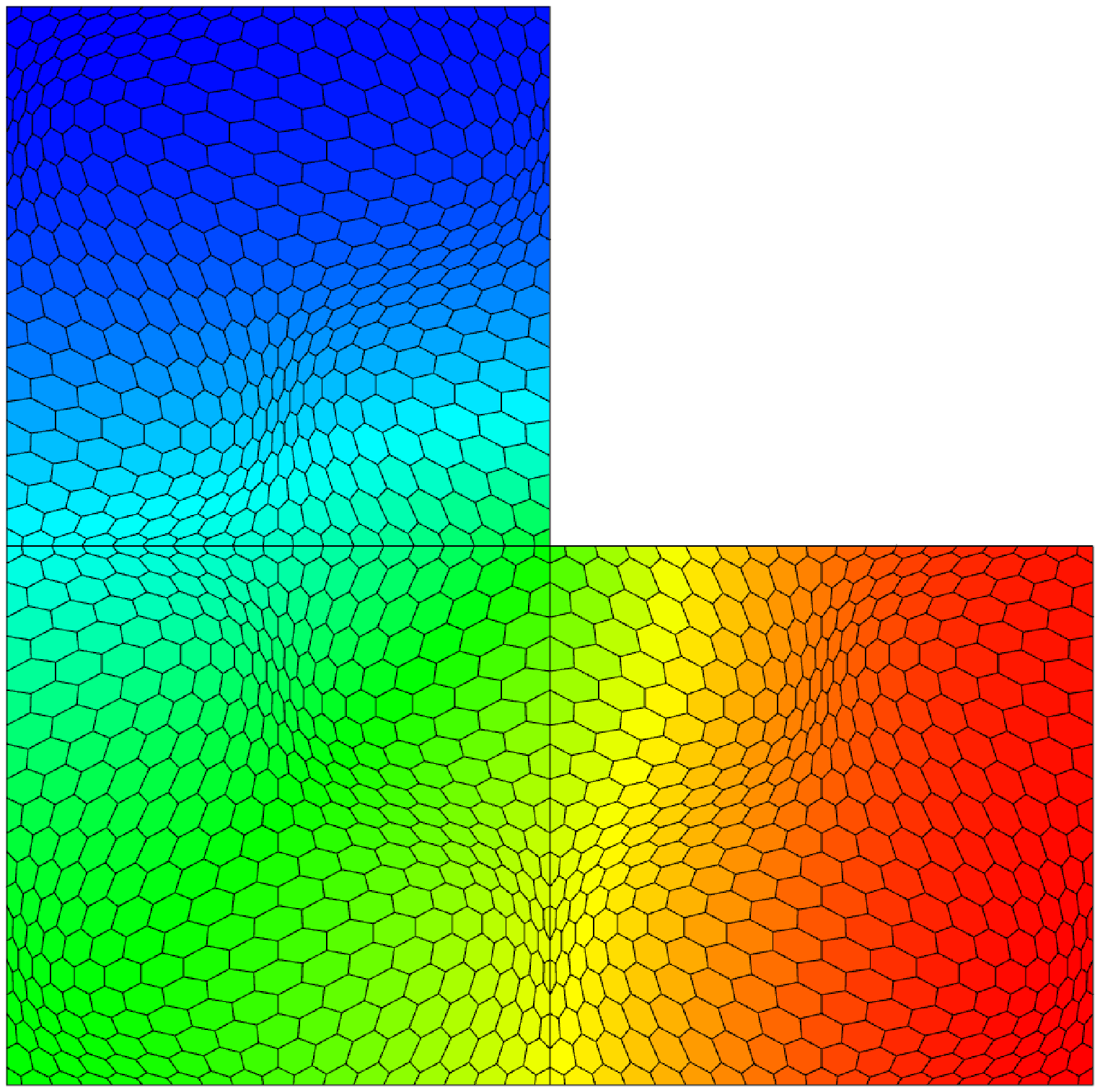}
\end{minipage}
\caption{Test 1. Eigenfunctions of the acoustic problem corresponding to the first lowest eigenvalue: displacement field $\bw_{h1}$ (left), pressure fluctuation $p_{h1}$ (right).}
\label{FIG:eigenfunction}
\end{center}
\end{figure}

\subsection{Test 2: H-shaped domain.}
The aim of this test is to assess the performance of the adaptive scheme when solving a problem with a singular solution.
In this test $\Omega$ consists of an H-shaped domain that represents the union of two pools. 
%\FL{
More precisely, the geometry of this domain is given by
\begin{equation*}
%\O:=[(0,1/2)\times(0,3)] \cup [(1,2)\times(5/4,15/8)] \cup [(1,3/2)\times(0,3)].
\Omega:=\big\{ (0,3/2)\times(0,3) \big\} \setminus \big\{ \{ [1/2,1]\times [0,5/4]  \} \cup \{ [1/2,1]\times [15/8,3] \}  \big\}.
\end{equation*}
According to the definition of this domain, four singularities are present, leading once again to a lack of regularity for the eigenfunctions
of our acoustic problem. Hence, the proposed estimator  $\boldsymbol{\eta}$ defined in \eqref{eq:global_est} must be capable of 
identify these singularities of the geometry and perform and adaptive refinement, with different polygonal meshes, in order to recover optimal order of convergence.
%}

%In Figure \ref{FIG:VEMHejemplo} we present an example of polygonal mesh that for which our estimator will work.}
%\begin{figure}[H]
%\begin{center}
%%\begin{minipage}{4.2cm}
%%\centering\includegraphics[height=4.1cm, width=4.1cm]{}
%%\end{minipage}
%\begin{minipage}{4.2cm}
%\centering\includegraphics[height=8.1cm, width=4.1cm]{}
%\end{minipage}
%\caption{Test 2. Domain $\Omega$ for triangle-square mesh.}
%\label{FIG:VEMHejemplo}
%\end{center}
%\end{figure}
%\begin{figure}[H]
%\begin{center}
%\begin{minipage}{4.2cm}
%\centering\includegraphics[height=4.1cm, width=4.1cm]{}
%\end{minipage}
%\begin{minipage}{4.0cm}
%\centering\includegraphics[height=4.1cm, width=4.1cm]{}
%\end{minipage}
%\begin{minipage}{4.0cm}
%\centering\includegraphics[height=4.1cm, width=4.1cm]{}
%\end{minipage}
%\caption{Adaptively refined meshes obtained with VEM scheme at refinement steps 0, 1 and 8 (Adaptive VEM).}
%\label{FIG:VEMH}
%\end{center}
%\end{figure}
%\GR{
Figures \ref{FIG:VEMHST} to \ref{FIG:VEMHV} show the adaptively refined meshes obtained with VEM procedures and different initial meshes. In Figure \ref{FIG:VEMHST} we start with a mesh of triangles and squares, while in Figure \ref{FIG:VEMHB} we begin with a trapezoidal mesh consisting of partitions of the domain into $N\times N$ congruent trapezoids. Finally in Figure \ref{FIG:VEMHV} we start with a Voronoi mesh.
%}
\begin{figure}[H]
\begin{center}
\begin{minipage}{4.2cm}
\centering\includegraphics[height=6.1cm, width=4.1cm]{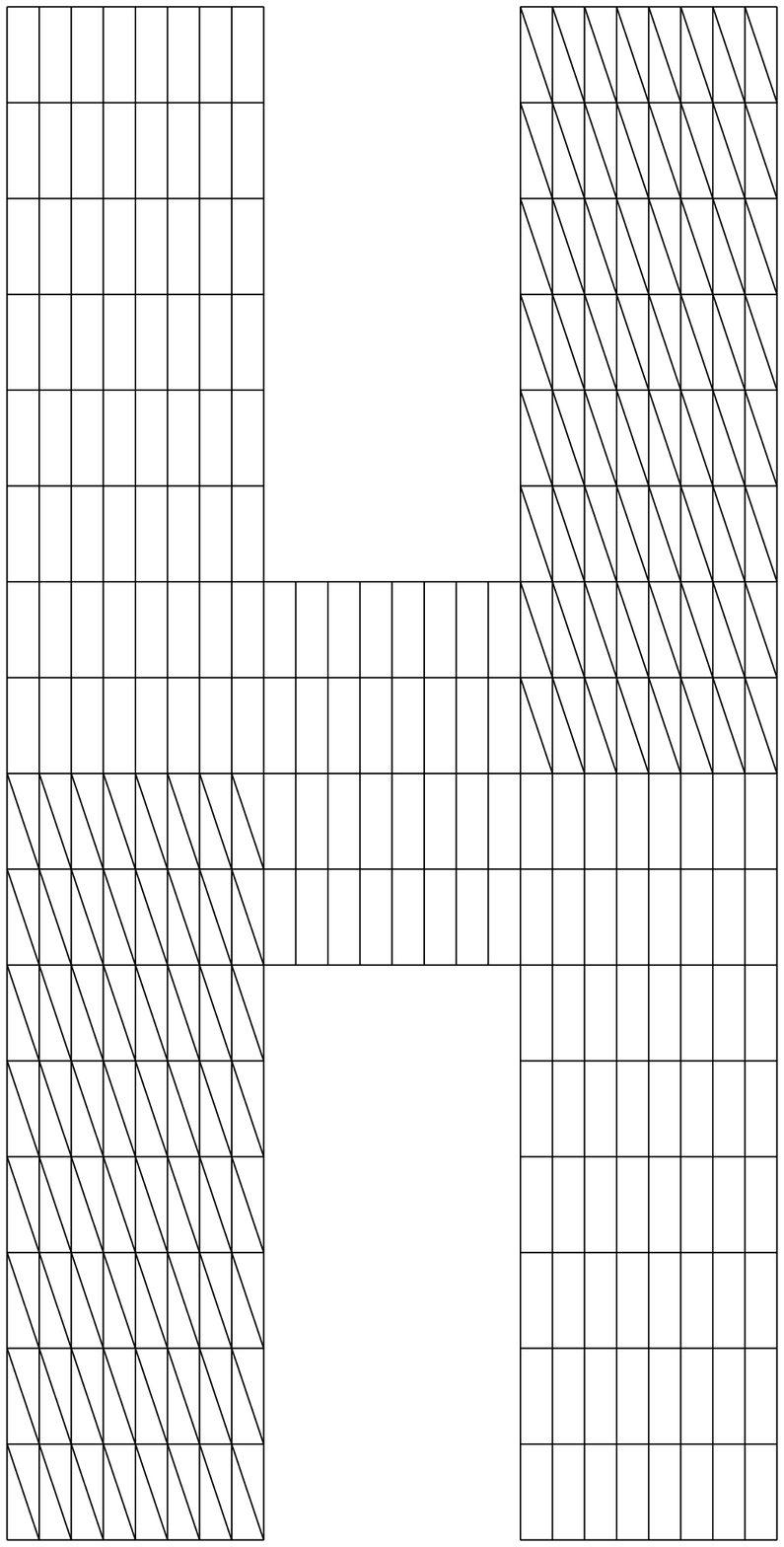}
\end{minipage}
\begin{minipage}{4.2cm}
\centering\includegraphics[height=6.1cm, width=4.1cm]{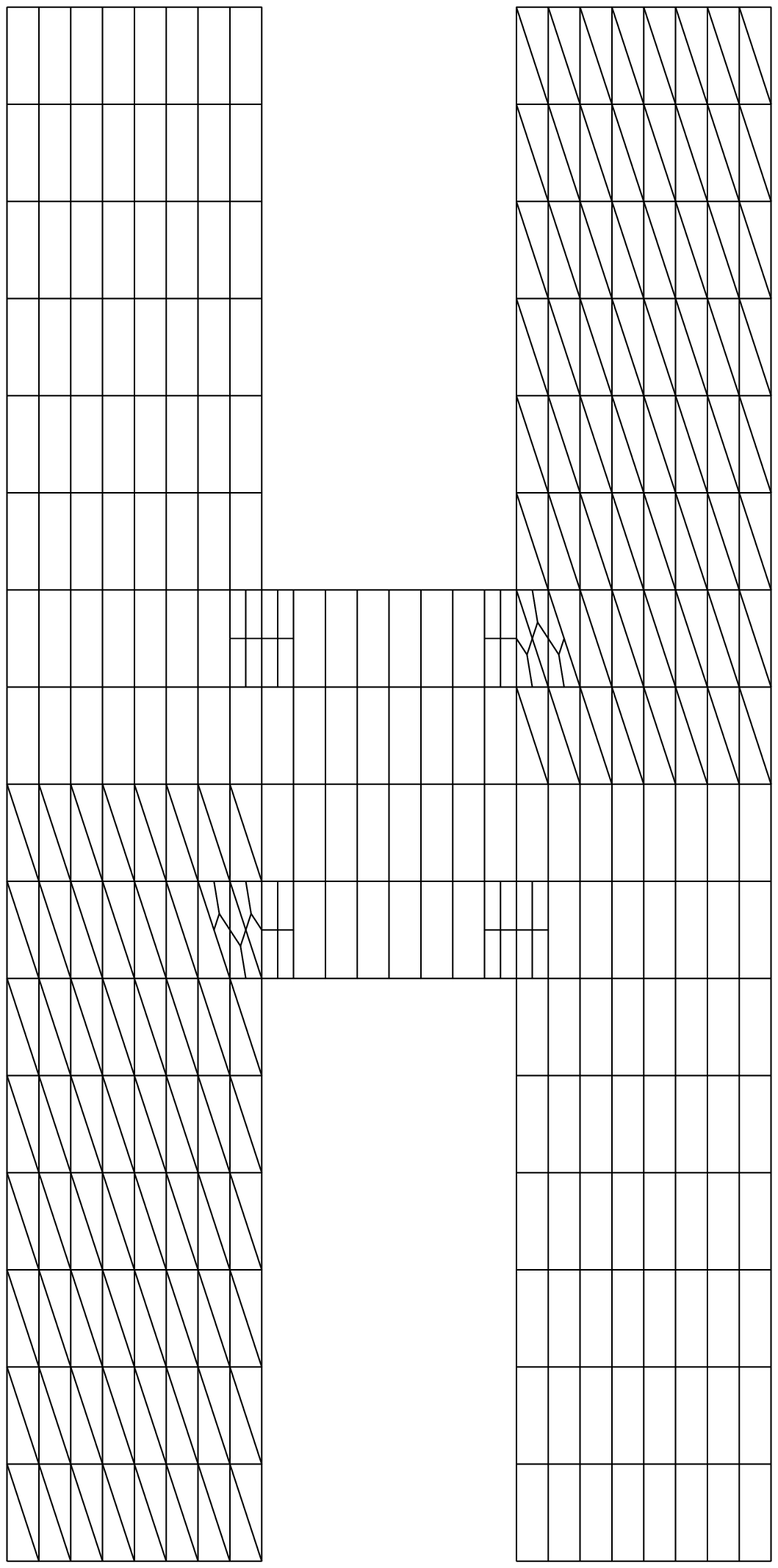}
\end{minipage}
\begin{minipage}{4.2cm}
\centering\includegraphics[height=6.1cm, width=4.1cm]{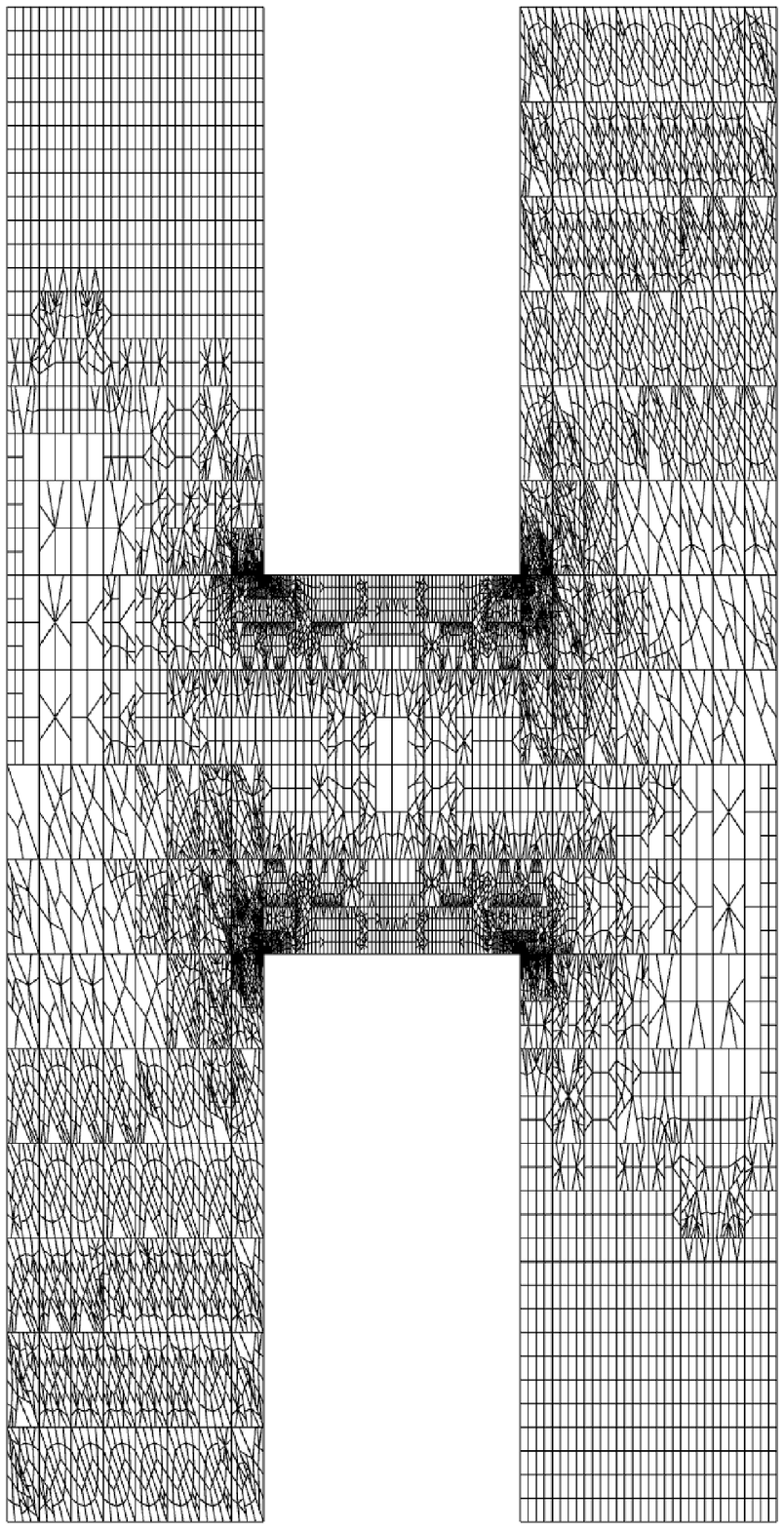}
\end{minipage}
\caption{Adaptively refined meshes obtained with VEM scheme at refinement steps 0, 1 and 8 (Adaptive VEM S-T).}
\label{FIG:VEMHST}
\end{center}
\end{figure}
\begin{figure}[H]
\begin{center}
\begin{minipage}{4.2cm}
\centering\includegraphics[height=6.1cm, width=4.1cm]{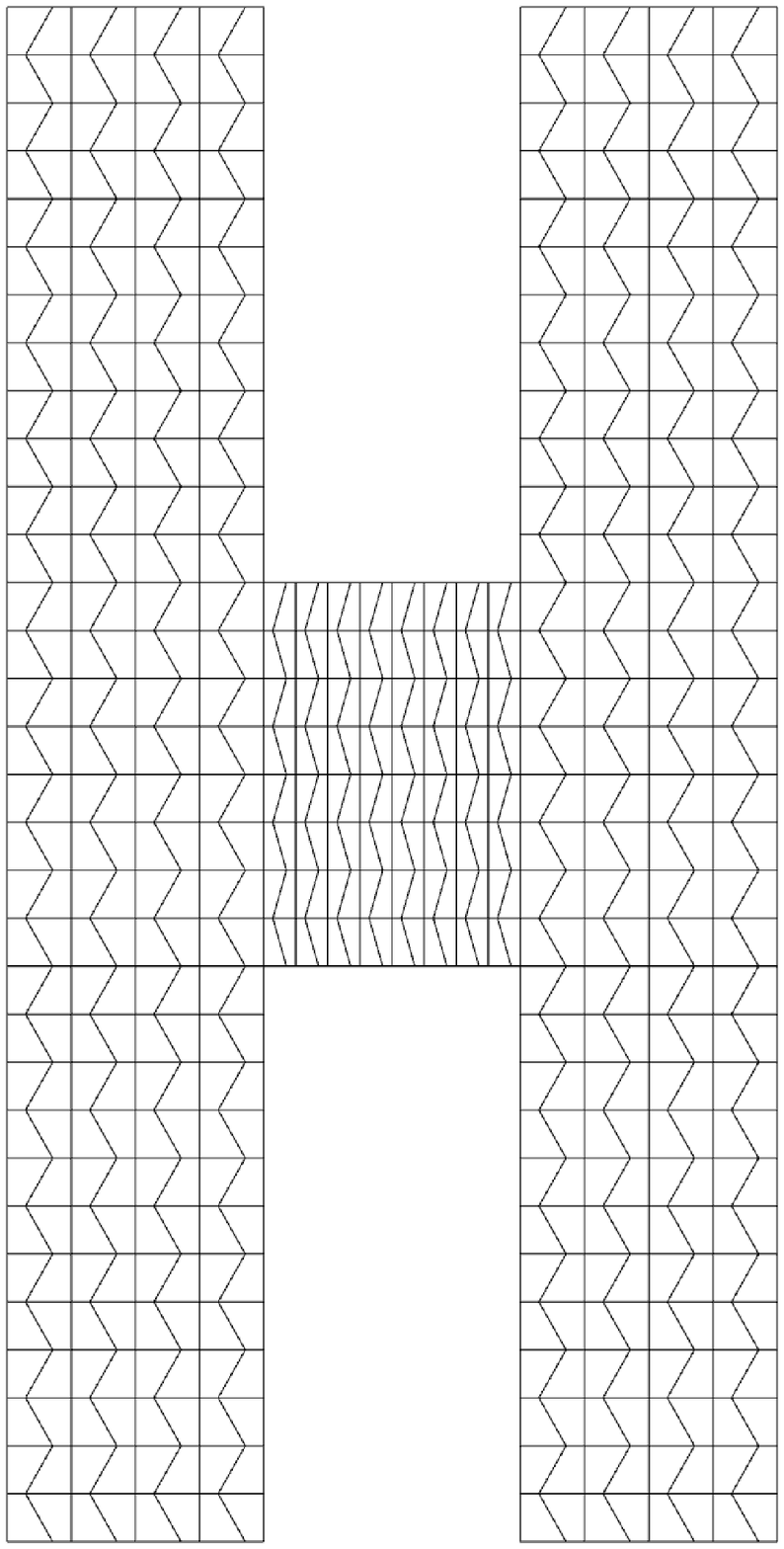}
\end{minipage}
\begin{minipage}{4.0cm}
\centering\includegraphics[height=6.1cm, width=4.1cm]{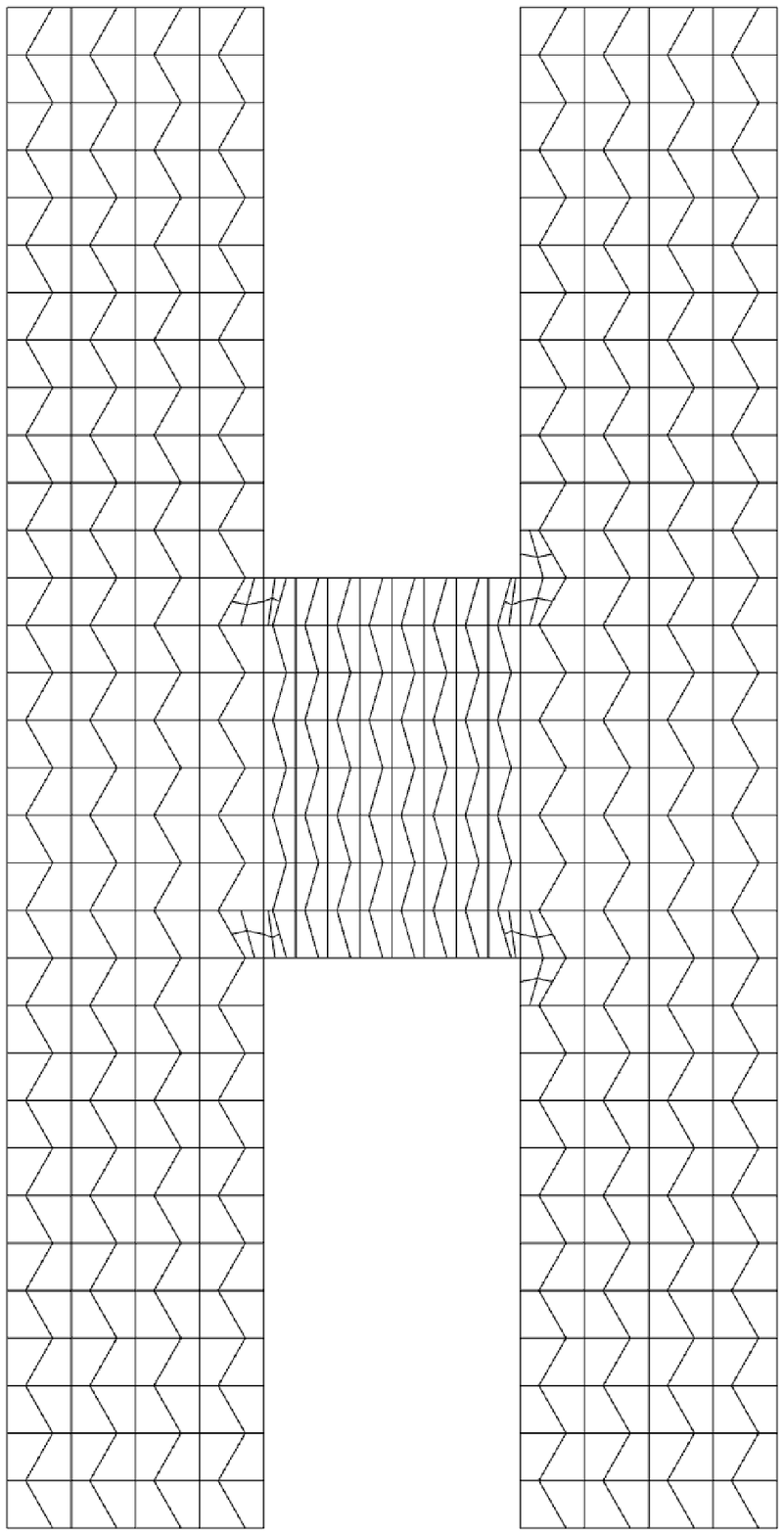}
\end{minipage}
\begin{minipage}{4.0cm}
\centering\includegraphics[height=6.1cm, width=4.1cm]{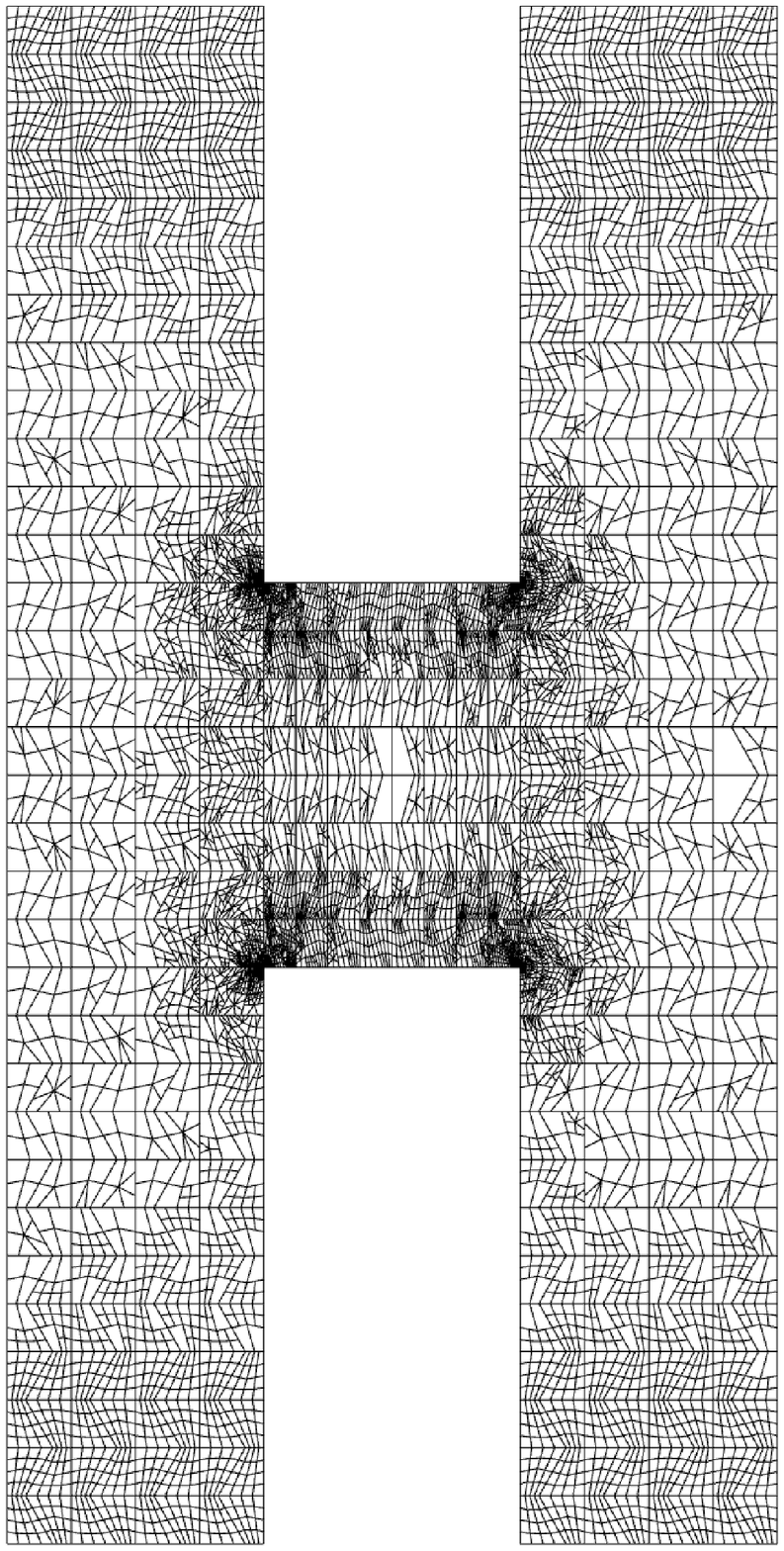}
\end{minipage}
\caption{Adaptively refined meshes obtained with VEM scheme at refinement steps 0, 1 and 8 (Adaptive VEM B).}
\label{FIG:VEMHB}
\end{center}
\end{figure}
\begin{figure}[H]
\begin{center}
\begin{minipage}{4.2cm}
\centering\includegraphics[height=6.1cm, width=4.1cm]{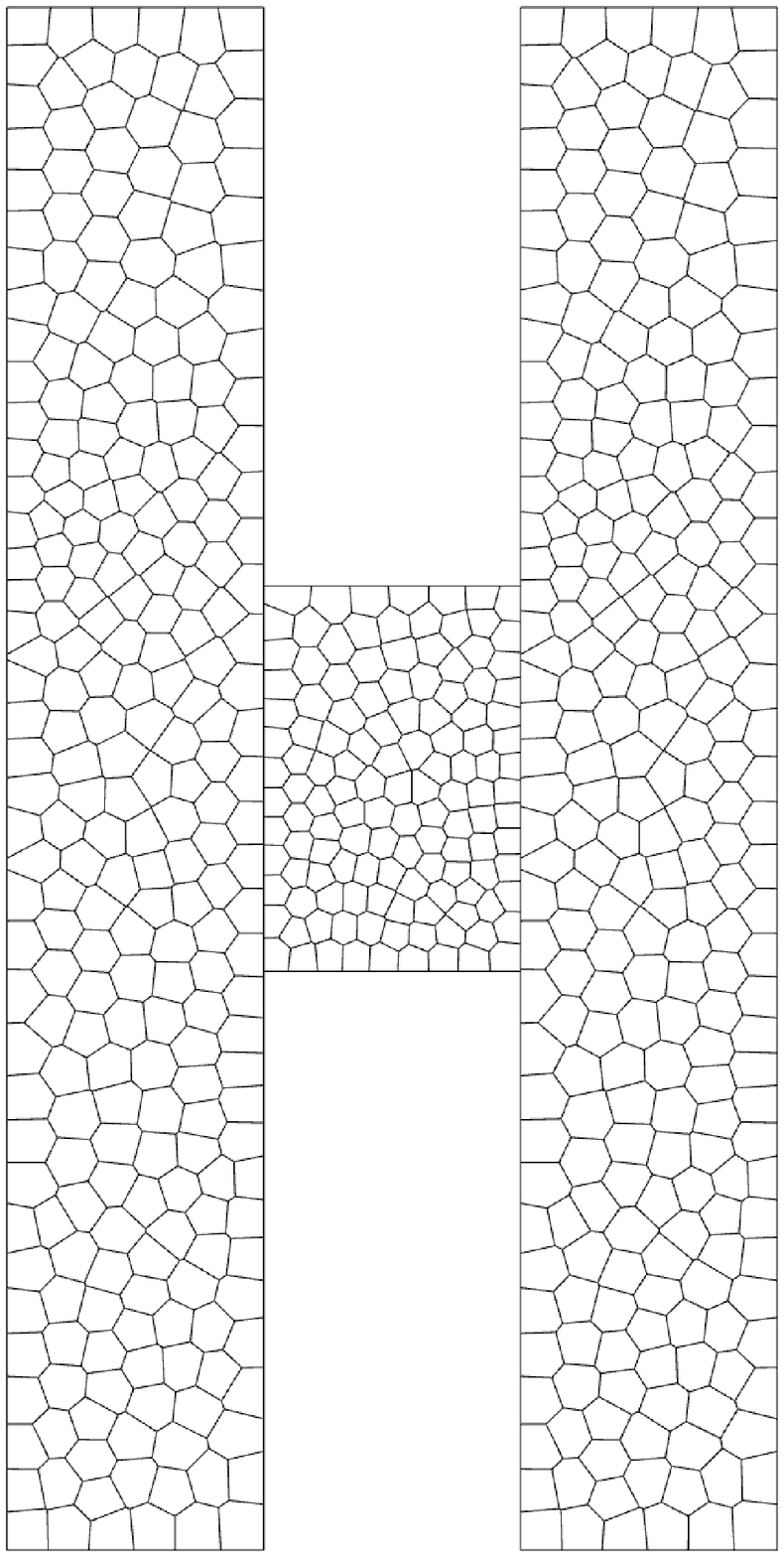}
\end{minipage}
\begin{minipage}{4.0cm}
\centering\includegraphics[height=6.1cm, width=4.1cm]{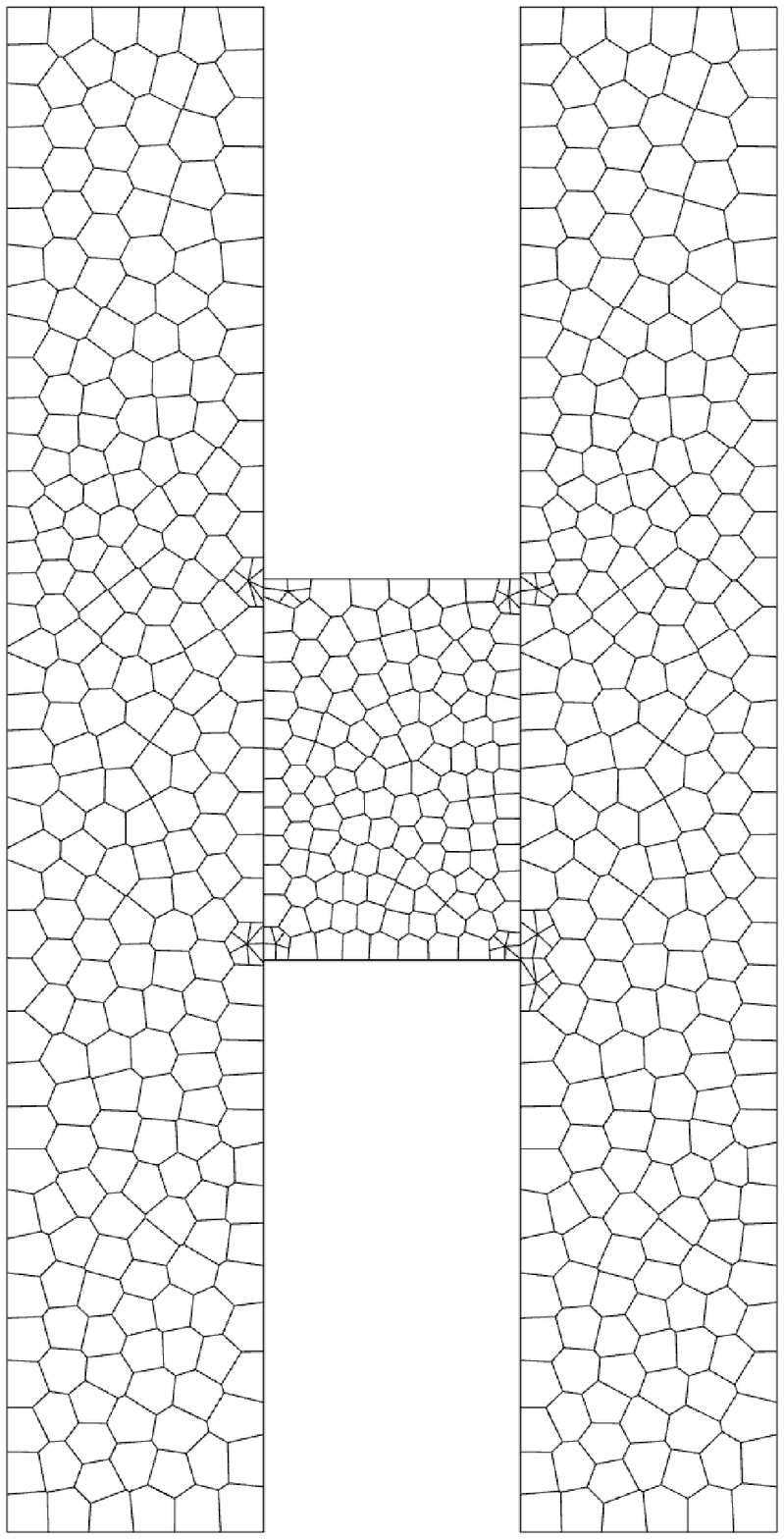}
\end{minipage}
\begin{minipage}{4.0cm}
\centering\includegraphics[height=6.1cm, width=4.1cm]{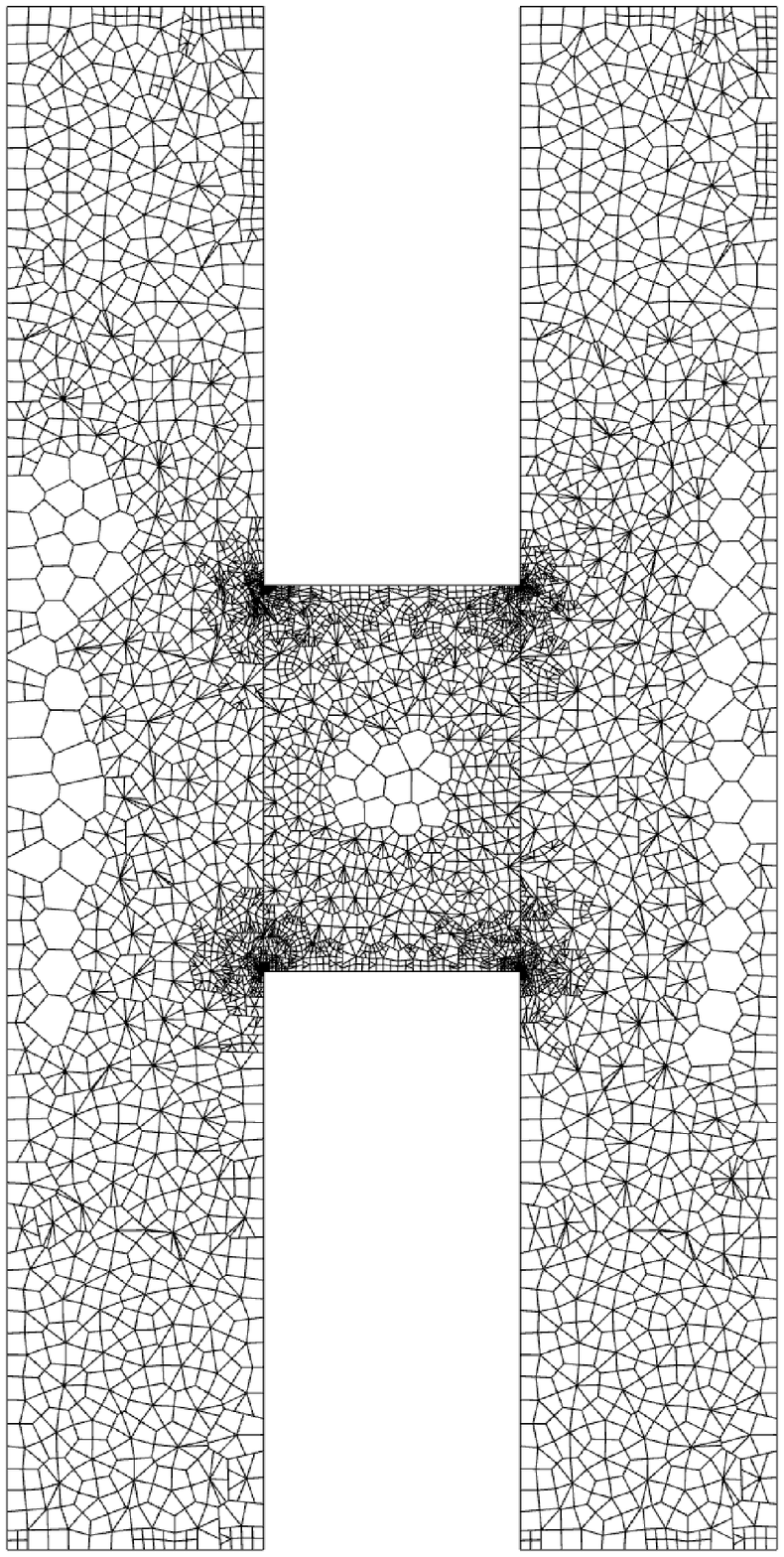}
\end{minipage}
\caption{Adaptively refined meshes obtained with VEM scheme at refinement steps 0, 1 and 8 (Adaptive VEM V).}
\label{FIG:VEMHV}
\end{center}
\end{figure}
Similarly to Test 1, the computations of  the errors $|\lambda_2 -\lambda_{h2}|$, have been obtained with a least squares fitting of the calculated values obtained with extremely refined meshes. Thus, we have obtained the value $\lambda_2= 1.2040$, which has at least four exact significant digits.

In Table \ref{TABLA:3} we report the second lowest eigenvalue $\lambda_{h2}$ on uniformly refined meshes, adaptively refined meshes with different type of initial meshes. Each table includes the estimated convergence rate.

\begin{table}[H]
\begin{center}
\caption{Computed lowest eigenvalue   $\l_{h2}$ computed with different initial meshes.}%:  uniformly refined meshes (``Uniform VEM''), adaptively refined meshes with  triangles (``Adaptive VEMT''), adaptively refined meshes with VEM and  initial mesh of triangles (``Adaptive VEM'') and adaptively refined meshes with VEM and  initial mesh of hexagons (``Adaptive VEMH'').}
\begin{tabular}{|c|c||c|c||c|c||c|c||c|c||}
  \hline
    \multicolumn{2}{|c||}{Uniform VEM}&  \multicolumn{2}{c||}{Adaptive VEM S-T} &  \multicolumn{2}{c||}{Adaptive VEM B} &  \multicolumn{2}{c||}{Adaptive VEM V} \\
    \hline
     $N$ & $\l_{h2}$  &         $N$ & $\l_{h2}$&       $N$ & $\l_{h2}$ &       $N$ & $\l_{h2}$   \\
\hline
916   &1.1831 &     756   &1.1821&1368 &1.1925 &1925 & 1.1959\\
   3560&   1.1960&    832&   1.1893&      1442 &1.1957& 2027 & 1.1981\\
   14032  & 1.2009  &   1068 &  1.1959&     1594& 1.1979& 2152 &1.1992\\
   55712  & 1.2028  &    1830 &  1.1992&    1848& 1.1991& 2593& 1.2003\\
   222016  & 1.2035  &   3304 &  1.2020&    2928& 1.2006& 3588& 1.2013\\
                     &    & 4992   &1.2027&    5564& 1.2024& 6903 &1.2028\\
                     &    &    8130  & 1.2031 &    8093& 1.2028& 9480& 1.2031\\
                     &     & 16320 &  1.2036&   12045& 1.2030& 12846 & 1.2032\\
                     &     &  23706 &  1.2037&   21613 &1.2036& 15214& 1.2033  \\                              \hline 
     Order   &$\mathcal{O}\left(N^{-0.70}\right)$ &   Order   & $\mathcal{O}\left(N^{-1.19}\right)$ &   Order   & $\mathcal{O}\left(N^{-1.09}\right)$ &   Order   & $\mathcal{O}\left(N^{-1.11}\right)$ \\
       \hline
       $\l_2$  &1.2040 &  $\l_{2}$  &1.2040&   $\l_{2}$  &1.2040&   $\l_{2}$  &1.2040\\
     \hline
    \end{tabular}
\label{TABLA:3}
\end{center}
\end{table}    
In Figure \ref{errorH} we present error curves where we observe that the \GR{three} refinement schemes lead to a correct convergence rate.
It can be seen from Table \ref{TABLA:3} and Figure \ref{errorH}, that the uniform refinement leads to a convergence rate close to that predicted by the theory, while the adaptive VEM  schemes allow us to recover the optimal order of convergence $\mathcal{O}\left(N^{-1}\right)$.

 \begin{figure}[H]
\begin{center}\begin{minipage}{8.0cm}
\centering\includegraphics[height=8.0cm, width=8.0cm]{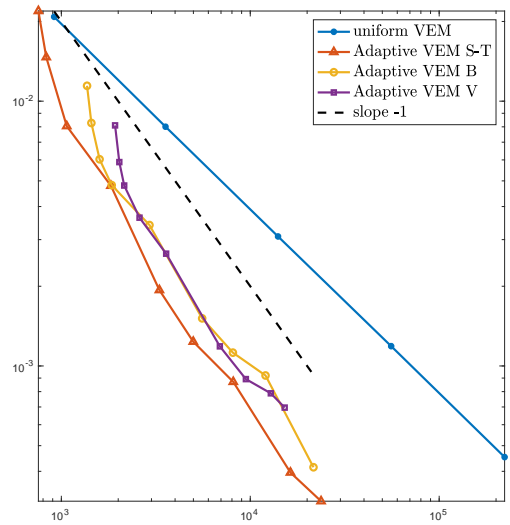}
\end{minipage}
\caption{Error curves of $|\l_{2}-\l_{h2}|$ for uniformly refined meshes
and adaptively refined meshes VEM with different initial meshes.}
\label{errorH}
\end{center}
\end{figure}
\GR{Figure \ref{FIG:eigenfunctionH} shows plots of the computed eigenfunctions  $\bw_{h2}$ corresponding to the second eigenvalues, the figure also includes the corresponding pressure fluctuation $p_{h2}$ }
\begin{figure}[H]
\begin{center}
\begin{minipage}{4.2cm}
\centering\includegraphics[height=6.1cm, width=4.1cm]{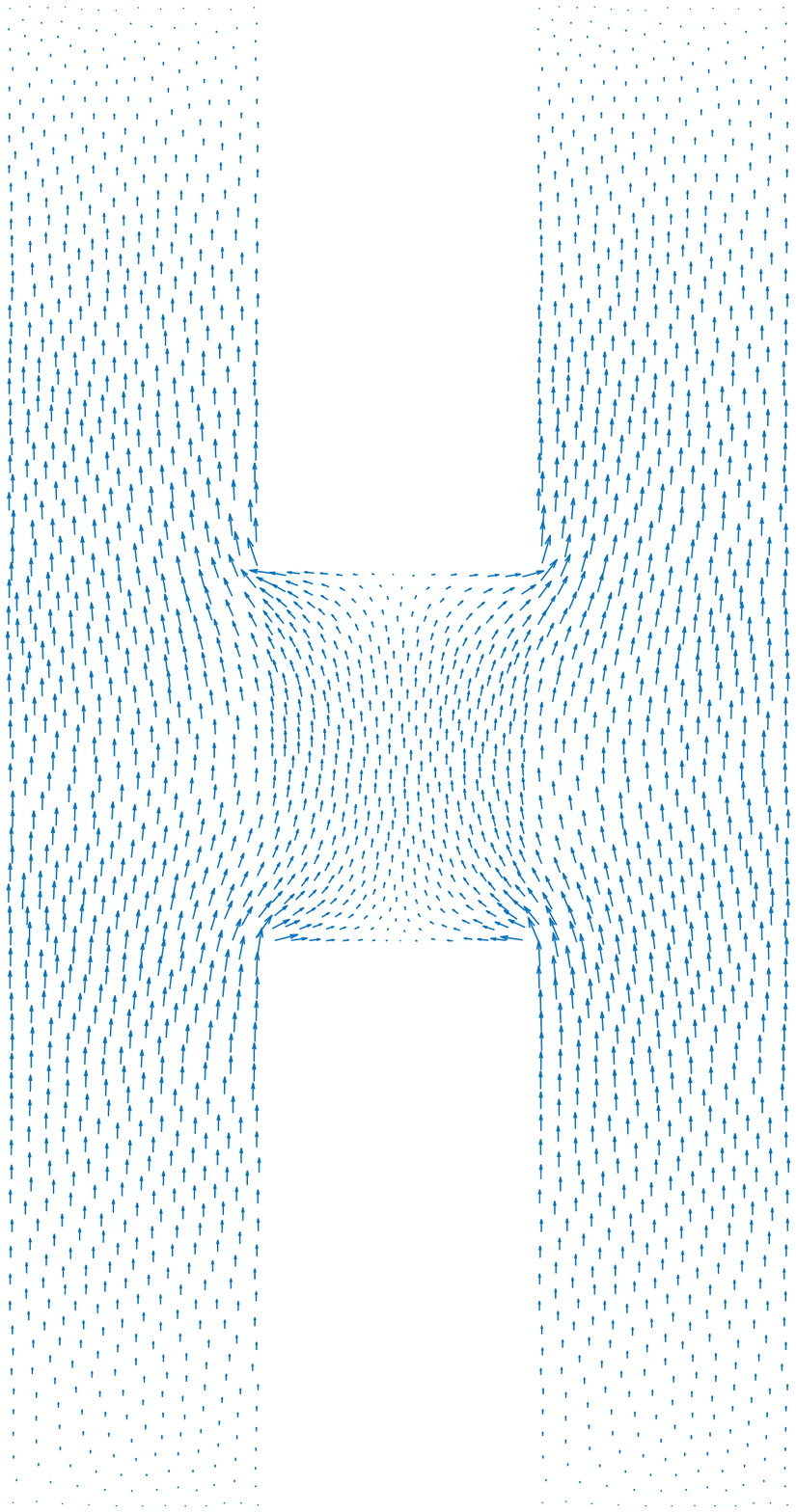}
\end{minipage}
\begin{minipage}{4.2cm}
\centering\includegraphics[height=6.1cm, width=4.1cm]{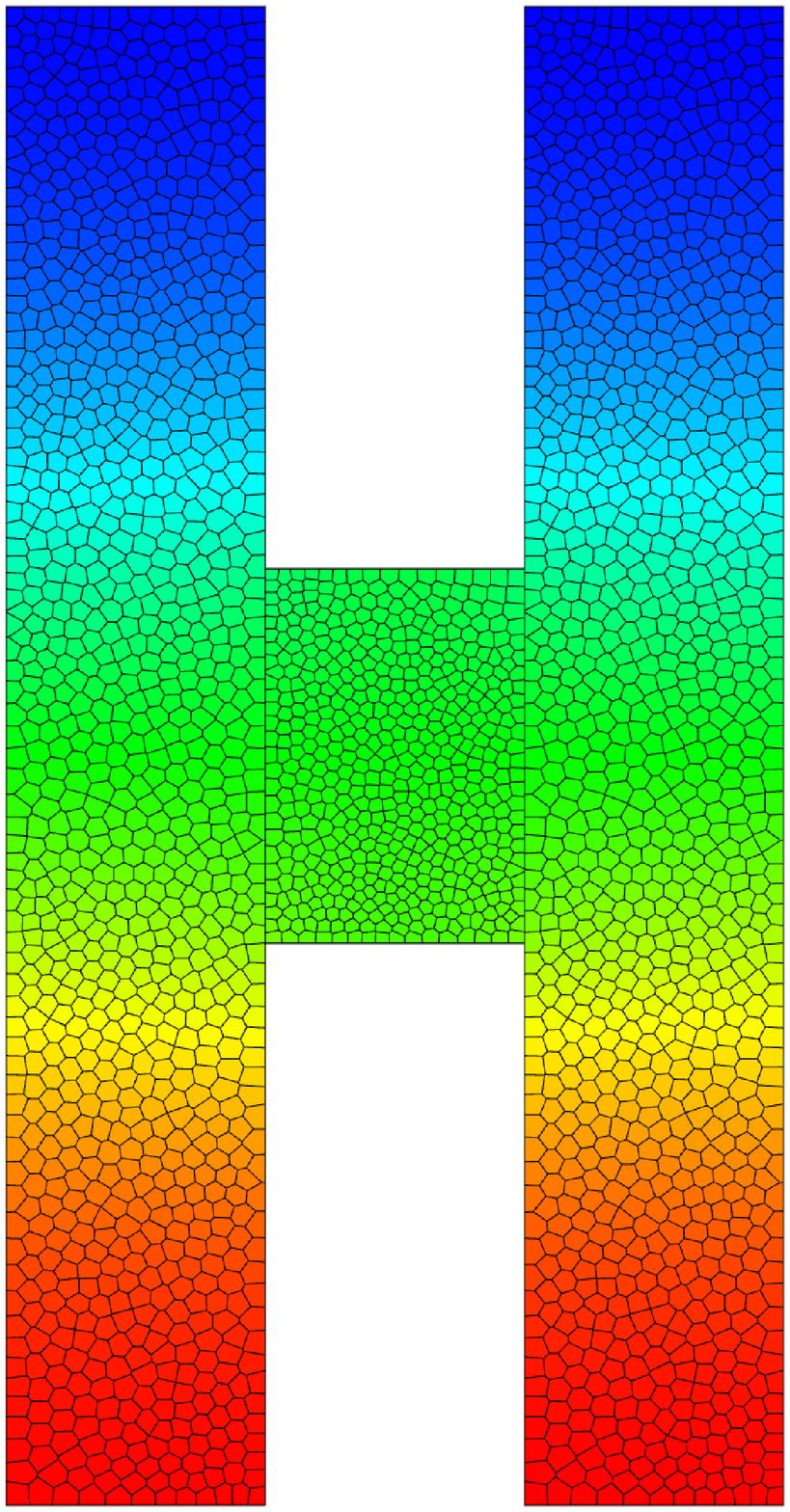}
\end{minipage}
\caption{Test 2. Eigenfunctions of the acoustic problem corresponding to the second lowest eigenvalue: displacement field $\bw_{h2}$ (left), pressure fluctuation $p_{h2}$ (right).}
\label{FIG:eigenfunctionH}
\end{center}
\end{figure}

\subsection{Test 3: Circular domain with obstacles.}
%\GR{
As a third test, we have considered a configuration closer to a real application: four square tubes immersed in a fluid occupying a circular cavity. Clearly in this test there are two relevant geometrical issues: in one hand, we have a non polygonal domain for which we are making an approximation by means of polygonal meshes, and the four rigid squares that lie in the interior of the circle. These tubes lead to non smooth eigenfunctions when the solutions for the acoustic problem are approximated, due the singularities of the corner on each square.

To make matters precise, let us define the circular domain by $\O_C:=\{(x,y)\in\mathbb{R}^2\,:\,x^2+y^2<1\}$ and the squares 
$\O_{\text{I}}:=[1/5,3/5]\times[1/5,3/5]$,  $\O_{\text{II}}:=[-3/5,-1/5]\times [1/5,3/5]$, $\O_{\text{III}}:=[-3/5,-1/5]\times [-3/5,-1/5]$ 
and 
$\O_{\text{IV}}:=[1/5,3/5]\times [-3/5,-1/5]$. 
Hence, the computational domain is $\O:=\O_C\setminus \{ \O_{\text{I}}\cup\O_{\text{II}}\cup\O_{\text{III}}\cup\O_{\text{IV}}\}$. 
%}

%\FL{}

%\begin{table}[H]
%\begin{center}
%\caption{Test 3. The lowest computed eigenvalues $\l_{hi}$, $1\le i\le6$
%for different $\CT_{h}$.}
%\vspace{0.3cm}
%\begin{tabular}{|c|c|c|c|c|c|c|c|}
%\hline
%$\CT_{h}$& $N$ & $\l_{h1}$ & $\l_{h2}$ & $\l_{h3}$ & 
% $\l_{h4}$ & $\l_{h5}$& $\l_{h6}$  \\
%\hline
%&18 &  37.7081 &  59.7051  & 76.6486   &113.201&  119.976  & 155.366\\
%&38  & 38.3107 &  60.5558  & 78.4827  & 117.017  & 125.714 &  163.376\\
%$\CT_{h}^{5} $ &54  & 38.4262 &  60.6812  & 78.7317  & 117.587  & 126.701 &  164.712\\
%&70 &  38.4735 &  60.7270  & 78.8254  & 117.797  & 127.079 &  165.205\\
%&90  & 38.4999  & 60.7511  & 78.8784  & 117.909  & 127.284  & 165.454 \\
%\hline
% & Order   &1.65      &         2.04       &        2.13     &          2.02   &            1.82       &         1.89  \\ \hline
%& Extrap. &38.5625  & 60.7943   &78.9530  & 118.109 & 127.721 & 166.000\\
%\hline
%\end{tabular}
%\label{tabla2}
%\end{center}
%\end{table}

%\FL{
In the sequel, we consider the  fourth eigenfunction. In Figure \ref{FIG:VEMdona}  we present an adaptive refinement of our estimator when Voronoi meshes are considered.  On the left hand side we present the initial mesh and,  after 1 and 8 iterations of our numerical method, we observe that the estimator $\boldsymbol{\eta}$ identifies the singularities on the geometry that cause the poor regularity of the eigenfunction, and starts the refinement around these corners in order to recover the optimal order of convergence. 
%}

\begin{figure}[H]
\begin{center}
\begin{minipage}{4.2cm}
\centering\includegraphics[height=4.1cm, width=4.1cm]{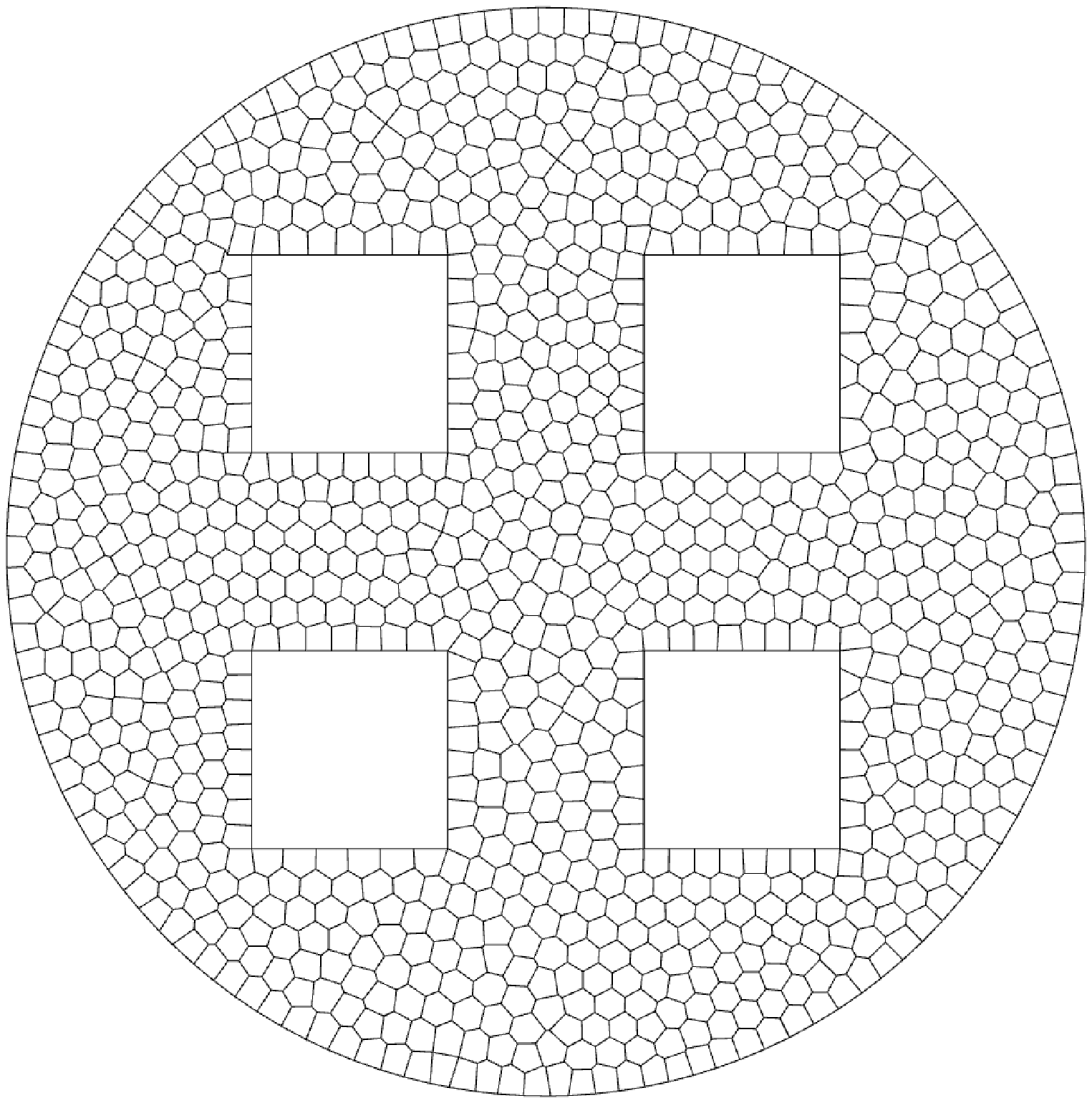}
\end{minipage}
\begin{minipage}{4.0cm}
\centering\includegraphics[height=4.1cm, width=4.1cm]{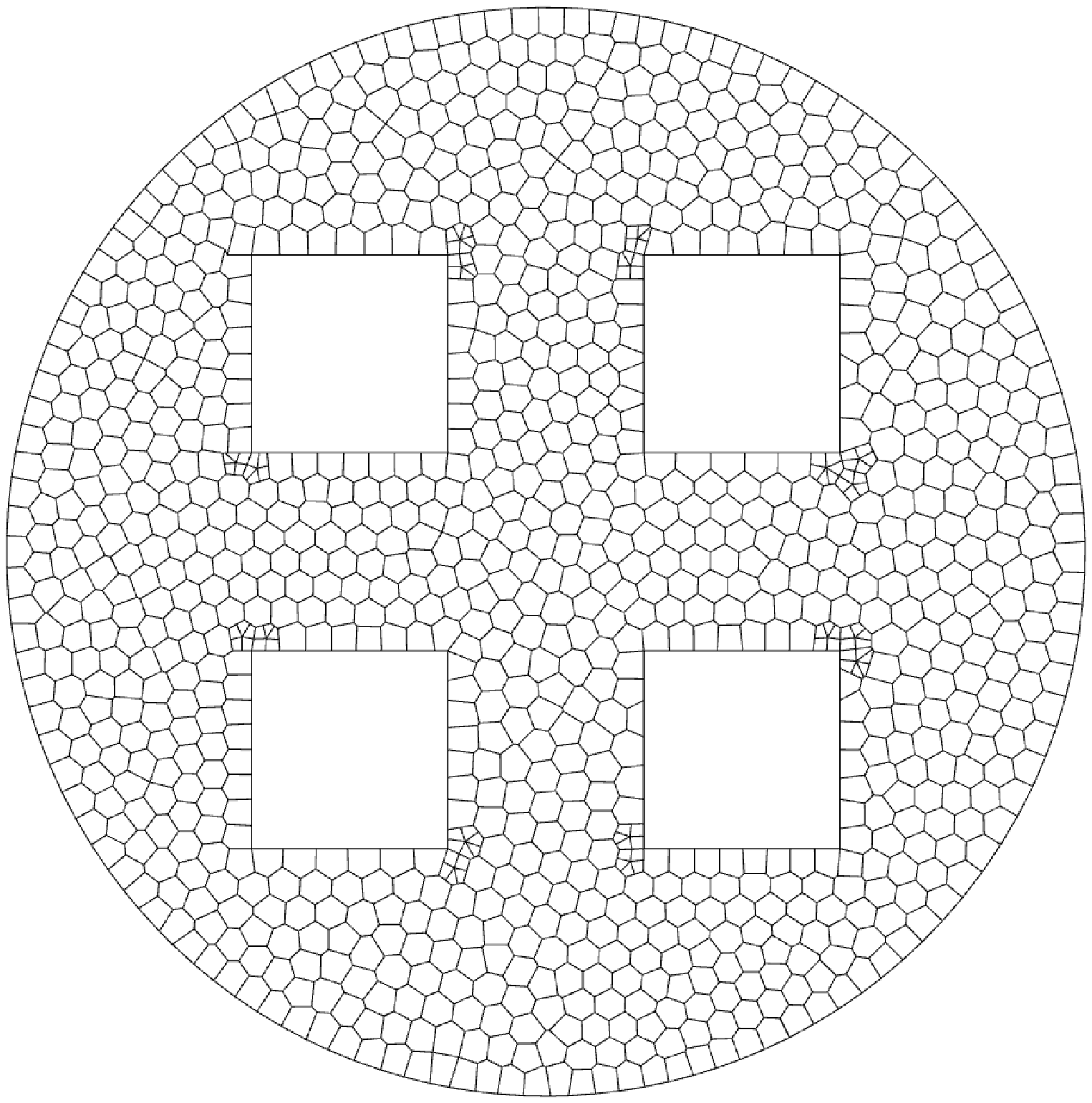}
\end{minipage}
\begin{minipage}{4.0cm}
\centering\includegraphics[height=4.1cm, width=4.1cm]{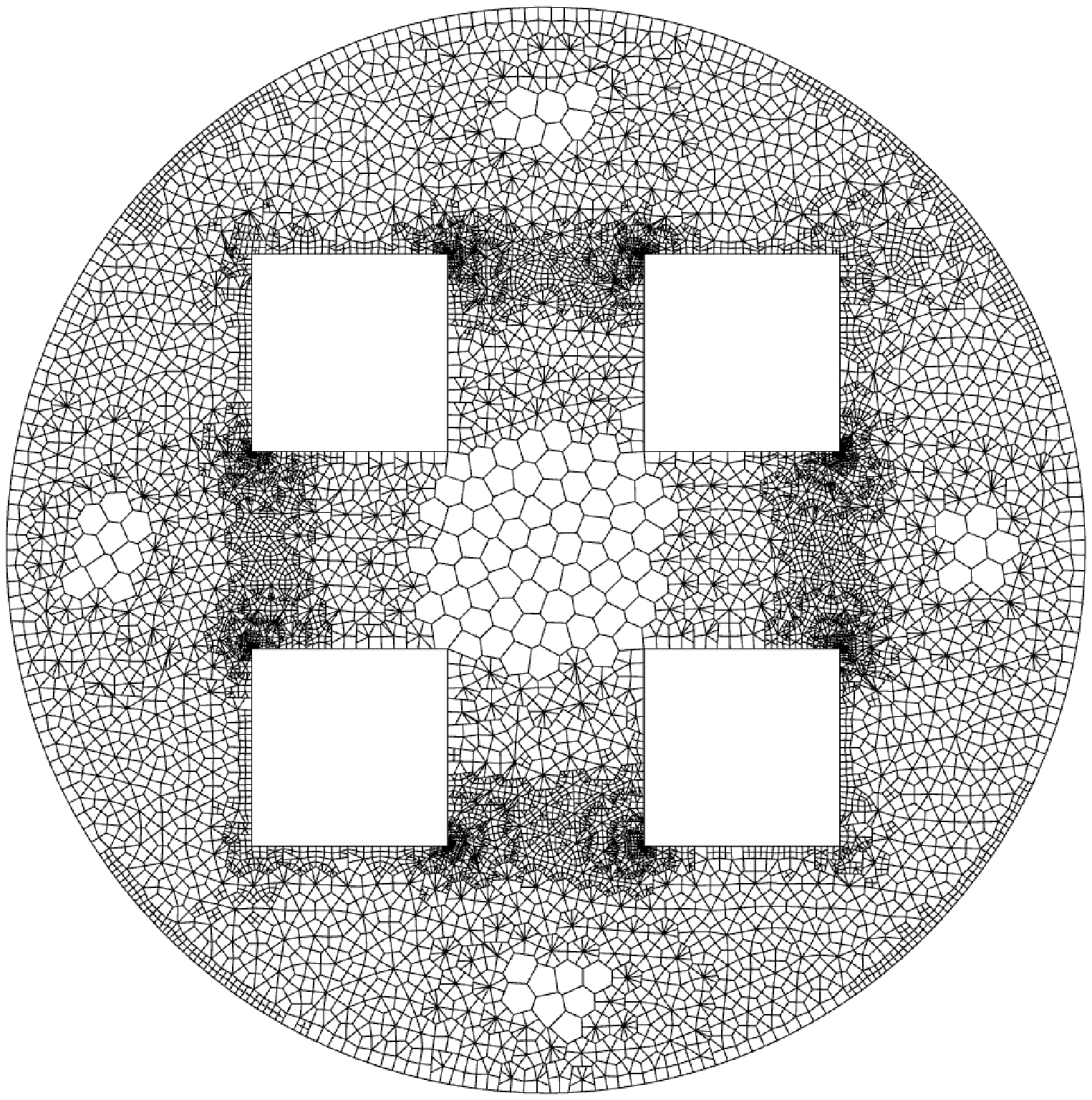}
\end{minipage}
\caption{Adaptively refined meshes obtained with VEM scheme at refinement steps 0, 1 and 8 (Adaptive VEM V).}
\label{FIG:VEMdona}
\end{center}
\end{figure}

%\begin{table}[H]
%\begin{center}
%\caption{Computed lowest eigenvalue   $\l_{h4}$ computed with different initial meshes.}\begin{tabular}{|c|c||c|c||c|c||c|c||c|c||}
%  \hline
%    \multicolumn{2}{|c||}{Uniform VEM}&  \multicolumn{2}{c||}{Adaptive VEM S-T}  \\
%    \hline
%     $N$ & $\l_{h4}$  &         $N$ & $\l_{h4}$       \\
%\hline
%3063  & 6.6214 &  3063  & 6.6214\\
%   6139&   6.6520  & 3259&   6.6446\\
%   12269  & 6.6701  & 3794 &  6.6596\\
%                    &    &  4975 &  6.6699\\
%                     &    &  8548 &  6.6846\\
%                     &    &  13265 &  6.6901\\
%                     &    &  18452 &  6.6908\\
%                     &    &  29609 &  6.6931\\
%                     &    &  51151 &  6.6957\\
%                      \hline 
%     Order   &$\mathcal{O}\left(N^{-0.75}\right)$ &   Order   & $\mathcal{O}\left(N^{-1.37}\right)$  \\
%       \hline
%       $\l_4$  &6.6967 &  $\l_{4}$  &6.6967   \\
%     \hline
%    \end{tabular}
%\label{TABLA:4}
%\end{center}
%\end{table}    

%\GR{	
Figure~\ref{errorV} shows a logarithmic plot of the errors between the calculated approximations of the fourth smallest positive eigenvalue and the ``exact" one, versus the number of degrees of freedom N of the meshes. As in the previous two tests, the exact value of the fourth eigenvalue is obtained by using a least squares fit. The figure shows the results obtained with "uniform" meshes and with adaptively refined meshes and shows how the optimal order of convergence is recovered. Finally, Figure~\ref{FIG:eigenfunctiondona} shows
the eigenfunctions of the acoustic problem corresponding to the fourth lowest eigenvalue.
%}
 \begin{figure}[H]
\begin{center}\begin{minipage}{8.0cm}
\centering\includegraphics[height=8.0cm, width=8.0cm]{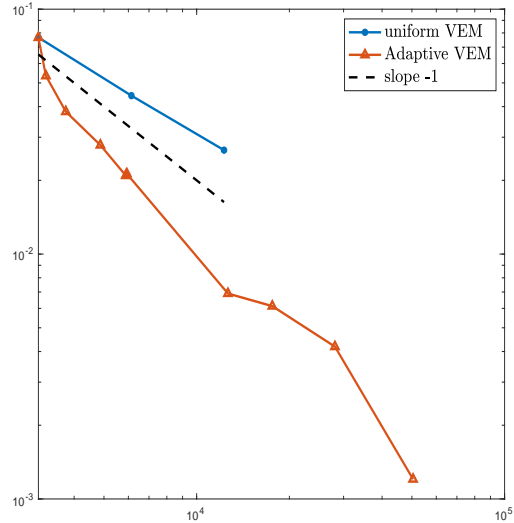}
\end{minipage}
\caption{Error curves of $|\l_{4}-\l_{h4}|$ for uniformly refined meshes
and adaptively refined meshes VEM.}
\label{errorV}
\end{center}
\end{figure}

\begin{figure}[H]
\begin{center}
\begin{minipage}{6.2cm}
\centering\includegraphics[height=6.1cm, width=6.1cm]{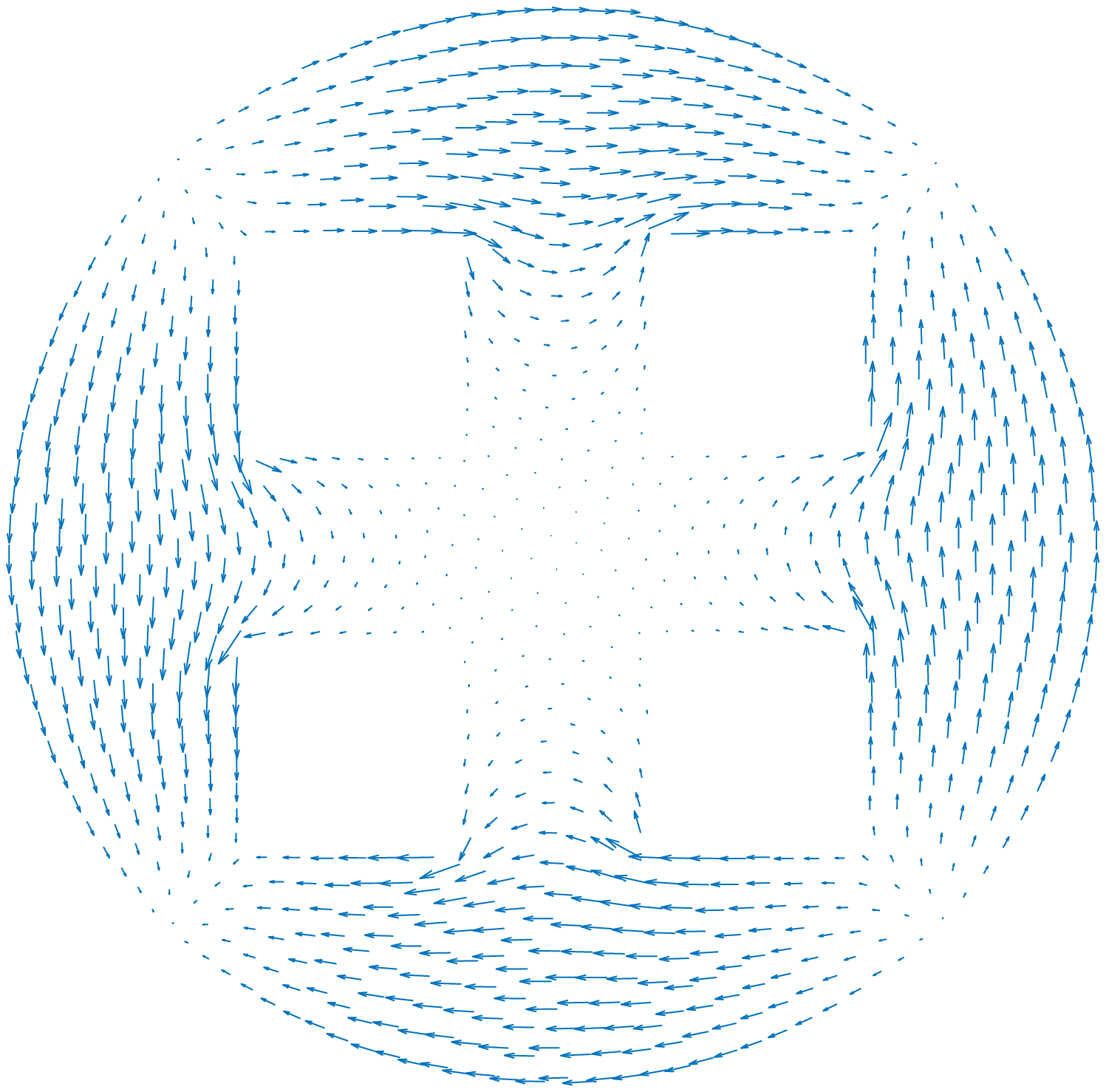}
\end{minipage}
\begin{minipage}{6.0cm}
\centering\includegraphics[height=6.1cm, width=6.1cm]{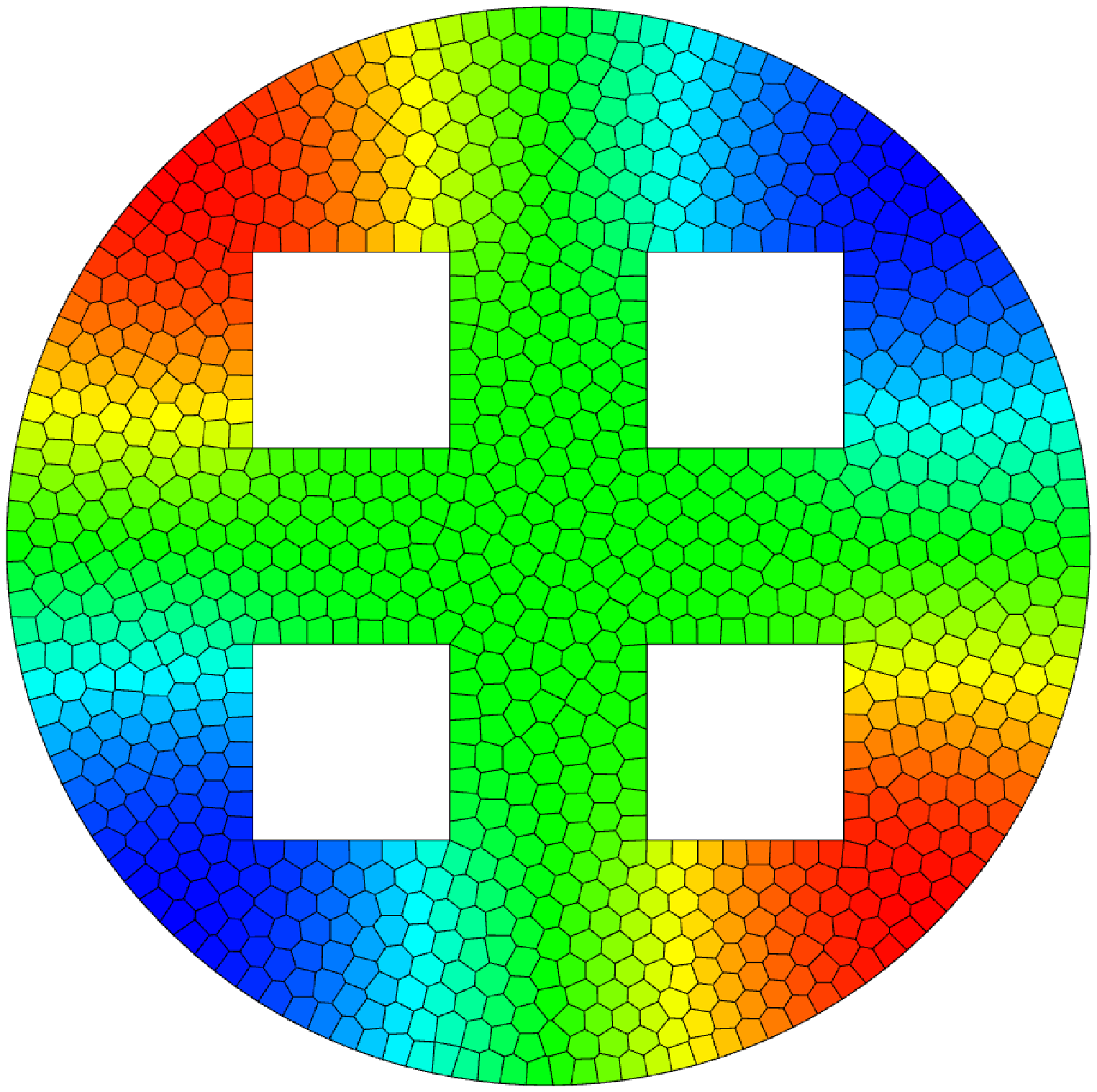}
\end{minipage}
\caption{Test 3. Eigenfunctions of the acoustic problem corresponding to the second lowest eigenvalue: displacement field $\bw_{h4}$ (left), pressure fluctuation $p_{h4}$ (right).}
\label{FIG:eigenfunctiondona}
\end{center}
\end{figure}

\subsection{Conclusion}
%In this work we have derived..... We stress that the present analyis can be extended for 3D by using the VEM spaces introduced in \cite{BBMR-NM16} and the recent results for interpolation estimates derived in \cite{BMMArXiv22}.
In this work we have derived and analyzed an a posteriori error estimate for the acoustic vibration problem by means of mixed virtual element discretization. The theoretical analysis developed in this work was strongly supported by superconvergence results for mixed spectral formulations. Several numerical tests that substantiate the theoretical results were presented, confirming that the proposed estimator is capable of recover the optimal order of convergence, as theory predicts. Moreover, we stress that the present analysis can be extended to the tridimensional case by using the VEM spaces introduced in \cite{BBMR-NM2016} and the recent results for interpolation estimates derived in \cite{BMMArXiv22}.

\bibliographystyle{amsplain}

\end{document}